\numberwithin{equation}{section}
\newtheorem*{maintheorem}{Main Theorem}
\newtheorem{thm}{Theorem}[section]
\newtheorem{lem}[thm]{Lemma}
\newtheorem{prop}[thm]{Proposition}
\theoremstyle{definition}
\newtheorem{rem}[thm]{Remark}
\newtheorem{nota}[thm]{Notation}
\newtheorem*{definition}{Definition}
\newtheorem*{claim}{Claim}
\newcommand{\R}{\mathbf{R}}
\newcommand{\C}{\mathbf{C}}
\newcommand{\F}{\mathbf{F}}
\newcommand{\N}{\mathbf{N}}
\newcommand{\Cdec}{\mathcal C_{\text{\rm anti-free}}}
\newcommand{\Ad}{\operatorname{Ad}}
\newcommand{\id}{\text{\rm id}}
\newcommand{\Aut}{\operatorname{Aut}}
\newcommand{\rL}{\mathord{\text{\rm L}}}
\newcommand{\rE}{\mathord{\text{\rm E}}}
\newcommand{\Sp}{\mathord{\text{\rm Sp}}}
\newcommand{\tr}{\mathord{\text{\rm tr}}}
\newcommand{\core}{\mathord{\text{\rm c}}}
\newcommand{\Proj}{\mathord{\text{\rm Proj}}_{\text{\rm f}}}
\newcommand{\Tr}{\mathord{\text{\rm Tr}}}
\newcommand{\Ball}{\mathord{\text{\rm Ball}}}
\newcommand{\ovt}{\mathbin{\overline{\otimes}}}
\newcommand{\dpr}{^{\prime\prime}}
\begin{document}

\title[Rigidity of free product von Neumann algebras]{Rigidity of free product von Neumann algebras}

\begin{abstract}
Let $I$ be any nonempty set and $(M_i, \varphi_i)_{i \in I}$ any family of nonamenable factors, endowed with arbitrary faithful normal states, that belong to a large class $\Cdec$ of (possibly type ${\rm III}$) von Neumann algebras including all nonprime factors, all nonfull factors and all factors possessing Cartan subalgebras. For the free product $(M, \varphi) = \ast_{i \in I} (M_i, \varphi_i)$, we show that the free product von Neumann algebra $M$ retains the cardinality $|I|$ and each nonamenable factor $M_i$ up to stably inner conjugacy, after permutation of the indices. Our main theorem unifies all previous Kurosh-type rigidity results for free product type ${\rm II_1}$ factors and is new for free product type ${\rm III}$ factors. It moreover provides new rigidity phenomena for type ${\rm III}$ factors.
\end{abstract}

\author{Cyril Houdayer}
 \address{Laboratoire de Math\'ematiques d'Orsay\\ Universit\'e Paris-Sud\\ CNRS\\ Universit\'e Paris-Saclay\\ 91405 Orsay\\ France}
\email{cyril.houdayer@math.u-psud.fr}

\thanks{CH is supported by ANR grant NEUMANN and ERC Starting Grant GAN 637601.}

\author{Yoshimichi Ueda}
\address{Graduate School of Mathematics \\ Kyushu University \\ Fukuoka, 819-0395 \\ Japan}
\email{ueda@math.kyushu-u.ac.jp}
\thanks{YU is supported by Grant-in-Aid for Scientific Research (C) 24540214.}

\subjclass[2010]{46L10, 46L54, 46L36}
\keywords{Free product von Neumann algebras; Popa's deformation/rigidity theory; Property Gamma; Type ${\rm III}$ factors; Ultraproduct von Neumann algebras}

\maketitle

\section{Introduction and statement of the Main Theorem}

In his seminal article \cite{Oz04}, Ozawa obtained the first Kurosh-type rigidity results for free product type ${\rm II_1}$ factors. Among other things, he showed that whenever $m \geq 1$ and $M_1, \dots, M_m$ are weakly exact nonamenable {\em nonprime} type ${\rm II_1}$ factors, the tracial free product von Neumann algebra $M_1 \ast \cdots \ast M_m$ retains the integer $m$ and each factor $M_i$ up to inner conjugacy, after permutation of the indices. Ozawa's approach to Kurosh-type rigidity for ${\rm II_1}$ factors was based on a combination of his C$^*$-algebraic techniques \cite{Oz03} and of Popa's intertwining techniques \cite{Po01, Po03} (see also \cite{OP03}). Shortly after, using Popa's deformation/rigidity theory, Ioana--Peterson--Popa obtained in \cite{IPP05} Kurosh-type rigidity results for tracial free products of {\em weakly rigid} type ${\rm II_1}$ factors, that is, ${\rm II_1}$ factors possessing regular diffuse von Neumann subalgebras with relative property (T) in the sense of \cite{Po01}. These Kurosh-type rigidity results for ${\rm II_1}$ factors were then unified and further generalized by Peterson in \cite{Pe06}, using his $\rL^2$-rigidity techniques, to cover tracial free products of nonamenable $\rL^2$-{\em rigid} type ${\rm II_1}$ factors. In \cite{As09}, Asher extended Ozawa's original result \cite{Oz04} to free products of weakly exact nonamenable nonprime type ${\rm II_1}$ factors with respect to nontracial states.

Regarding the structure of free product von Neumann algebras, the questions of factoriality, type classification and fullness for arbitrary free product von Neumann algebras were recently completely solved by Ueda in \cite{Ue10}. For any free product $(M, \varphi) = (M_1, \varphi_1) \ast (M_2, \varphi_2)$ with $\dim_\C M_i \geq 2$ and $(\dim_\C M_1,\dim_\C M_2) \neq (2,2)$, the free product von Neumann algebra $M$ splits as a direct sum $M = M_c \oplus M_d$ where $M_c$ is a full factor of type ${\rm II_1}$ or of type ${\rm III_\lambda}$ (with $0 < \lambda \leq 1$) and $M_d = 0$ or $M_d$ is a multimatrix algebra. Moreover,  Chifan--Houdayer showed in \cite{CH08} (see also \cite{Ue10}) that $M_c$ is always a {\em prime} factor (see Peterson \cite{Pe06} for the previous work in the tracial case) and Boutonnet--Houdayer--Raum showed in \cite{BHR12} that $M_c$ has no Cartan subalgebra (see Ioana \cite{Io12} for the the previous work in the tracial case). Very recently, in our joint work \cite{HU15}, we completely settled the questions of maximal amenability and maximal property Gamma of the inclusion $M_1 \subset M$ in arbitrary free product von Neumann algebras. In view of these recent structural results obtained in full generality, it is thus natural to seek for Kurosh-type rigidity results for {\em arbitrary} free product von Neumann algebras.

In this paper, we unify and generalize all the previous Kurosh-type rigidity results to {\em arbitrary} free products $(M, \varphi) = \ast_{i \in I} (M_i, \varphi_i)$ over {\em arbitrary} index sets $I$, where all $M_i$ are nonamenable factors that belong to a large class of (possibly type ${\rm III}$) factors that we call {\em anti-freely decomposable}. In order to state our Main Theorem, we will use the following terminology.

\begin{definition}
We will say that a nonamenable factor $M$ with separable predual is {\em anti-freely decomposable} if at least one of the following conditions holds. 
\begin{itemize}
\item[(i)] $M$ is {\em not prime}, that is, $M = M_1 \ovt M_2$ where $M_1$ and $M_2$ are diffuse factors (e.g.\ $M$ is McDuff).
\item[(ii)] $M$ has {\em property Gamma}, that is, the central sequence algebra $M' \cap M^\omega$ is diffuse for some nonprincipal ultrafilter $\omega \in \beta(\N) \setminus \N$ (e.g.\ $M$ is of type ${\rm III_0}$; see \cite[Proposition 3.9]{Co74}).
\item[(iii)]  $M$ possesses an amenable finite von Neumann subalgebra $A$ with expectation such that $A' \cap M = \mathcal Z(A)$ and $\mathcal N_M(A)\dpr = M$ (e.g.\ $M$ possesses a Cartan subalgebra).
\item [(iv)] $M$ is a ${\rm II_1}$ factor that possesses a regular diffuse von Neumann subalgebra with  relative property (T) in the sense of \cite[Definition 4.2.1]{Po01} (e.g.\ $M$ is a ${\rm II_1}$ factor with property (T) \cite{CJ85}).
\end{itemize}
\end{definition}
We will denote by $\Cdec$ the class of nonamenable factors with separable predual that are anti-freely decomposable in the sense of the above definition. 

Recall that the {\em Kurosh isomorphism theorem} for discrete groups (see e.g.\ \cite[pp 105]{CM82}) says that any discrete group can uniquely (up to permutation of components) be decomposed into a free product of {\em freely indecomposable} subgroups. It is not clear at all how to capture {\em freely indecomposable von Neumann algebras} practically. However, all the known {\em general} structural results on free product von Neumann algebras suggest that $\Cdec$ is indeed a natural large class of freely indecomposable factors. Hence the Main Theorem of this paper stated below is indeed a von Neumann algebra counterpart of the Kurosh isomorphism theorem and unifies all the previous counterparts.

\begin{maintheorem}
Let $I$ and $J$ be any  nonempty sets and $(M_i)_{i \in I}$ and $(N_j)_{j \in J}$ any families of nonamenable factors in the class $\Cdec$. For each $i \in I$ and each $j \in J$, choose any faithful normal states $\varphi_i \in (M_i)_\ast$ and $\psi_j \in (N_j)_\ast$. Denote by $(M, \varphi) = \ast_{i \in I} (M_i, \varphi_i)$ and $(N, \psi) = \ast_{j \in J} (N_j, \psi_j)$ the corresponding free products.

\begin{enumerate}
\item Assume that $M$ and $N$ are isomorphic. Then $|I| = |J|$ and there exists a bijection $\alpha : I \to J$ such that $M_i$ and $N_{\alpha(i)}$ are stably isomorphic for all $i \in I$.

\item Assume that $M$ and $N$ are isomorphic and identify $M = N$. Assume moreover that $M_i$ is a type ${\rm III}$ factor for all $i \in I$. Then there exists a unique bijection $\alpha : I \to J$ such that $M_i$ and $N_{\alpha(i)}$ are inner conjugate for all $i \in I$.
\end{enumerate} 
\end{maintheorem}

Our Main Theorem is new for free products of type ${\rm III}$ factors. In that case (see item (2)), our statement is as sharp as all previous Kurosh-type rigidity results for free products of type ${\rm II_1}$ factors. We point out that for {\em tracial} free products, our Main Theorem is still new in cases (i), (ii) and (iii) when the index set $I$ is {\em infinite} (compare with \cite{Oz04, Pe06, Io12}).

We now briefly explain the strategy of the proof of the Main Theorem. We refer to Sections \ref{structure} and \ref{proof} for further details. As we will see, the proof builds upon the tools and techniques we developed in our previous work \cite{HU15} on the {\em asymptotic structure} of free product von Neumann algebras. Using the very recent generalization of Popa's intertwining techniques in \cite[\S4]{HI15}, it suffices, modulo some technical things, to prove the existence of a bijection $\alpha : I \to J$ such that $M_i \preceq_M N_{\alpha(i)}$ and $N_{\alpha(i)} \preceq_M M_i$ for all $i \in I$. To simplify the discussion, fix $i \in I$. We need to show that there exists $j \in J$ such that $M_i \preceq_M N_j$.

Firstly, assume that $M_i$ is in case (i) or (ii).  Exploiting the anti-free decomposability property of $M_i$ (in case (i)) and a new characterization of property Gamma for arbitrary von Neumann algebras (in case (ii)) (see Theorem \ref{theorem-structure-gamma}) together with various technical results from our previous work \cite{HU15}, it suffices to prove that for a well-chosen diffuse abelian subalgebra $A \subset M_i$ with expectation whose relative commutant $A' \cap M_i$ is nonamenable, there exists $j \in J$ such that $A \preceq_M N_j$. This is achieved in Theorem \ref{thm-spectral-gap-general-AFP} by using a combination of Popa's spectral gap argument \cite{Po06} together with Connes--Takesaki's  structure theory for type ${\rm III}$ von Neumann algebras \cite{Co72, Ta03} and Houdayer--Isono's recent intertwining theorem \cite{HI15}. Secondly, assume that $M_i$ is in case (iii). Then it suffices  again to prove that there exists $j \in J$ such that $A \preceq_M N_j$. The proof is slightly more involved (see Theorem \ref{thm-normalizer-general-AFP}) and relies on Vaes's recent dichotomy result for normalizers inside tracial amalgamated free product von Neumann algebras \cite{Va13} (improving Ioana's previous result \cite{Io12} and involving Popa--Vaes's striking dichotomy result \cite{PV11}) instead of Popa's spectral gap argument \cite{Po06} (see Appendix \ref{appendix}). Thirdly, assume that $M_i$ is in case (iv). Then it suffices to prove that there exists $j \in J$ such that $A \preceq_M N_j$ where $A \subset M_i$ is a diffuse regular subalgebra with relative property (T). This is achieved in Theorem \ref{thm-relative-property-T-general-AFP} by reconstructing \cite[Theorem 4.3]{IPP05} in the semifinite setting. 

In Section \ref{further-results}, we prove further new results regarding the structure of free product von Neumann algebras. In particular, we obtain a complete characterization of {\em solidity} \cite{Oz03} for free products with respect to arbitrary faithful normal states and over arbitrary index sets.

\subsection*{Acknowledgments} The first named author is grateful to Sven Raum for allowing him to include in this paper their joint result Theorem \ref{theorem-structure-gamma} obtained through their recent work \cite{HR14}. He also  warmly thanks Adrian Ioana for sharing his ideas with him and for thought-provoking discussions that led to Proposition \ref{proposition-infinite-AFP}.

\tableofcontents

\section{Preliminaries}\label{preliminaries}

For any von Neumann algebra $M$, we will denote by $\mathcal Z(M)$ the centre of $M$, by $z_M(e)$ the central support of a projection $e \in M$, by $\mathcal U(M)$ the group of unitaries in $M$, by $\Ball(M)$ the unit ball of $M$ with respect to the uniform norm $\|\cdot\|_\infty$ and by $(M, \rL^2(M), J^M, \mathfrak P^M)$ the standard form of $M$. We will say that an inclusion of von Neumann algebras $P \subset 1_P M 1_P$ is {\em with expectation} if there exists a faithful normal conditional expectation $\rE_P : 1_P M 1_P \to P$. We will say that a $\sigma$-finite von Neumann algebra $M$ is {\em tracial} if it is endowed with a faithful normal tracial state $\tau$.

\subsection*{Background on $\sigma$-finite von Neumann algebras}

Let $M$ be any $\sigma$-finite von Neumann algebra with unique predual $M_\ast$ and $\varphi \in M_\ast$ any faithful state. We will write $\|x\|_\varphi = \varphi(x^* x)^{1/2}$ for every $x \in M$. Recall that on $\Ball(M)$, the topology given by $\|\cdot\|_\varphi$ 
coincides with the $\sigma$-strong topology. Denote by $\xi_\varphi \in \mathfrak P^M$ the unique representing vector of $\varphi$. The mapping $M \to \rL^2(M) : x \mapsto x \xi_\varphi$ defines an embedding with dense image such that $\|x\|_\varphi = \|x \xi_\varphi\|_{\rL^2(M)}$ for all $x \in M$.
 
We denote by $\sigma^\varphi$ the modular automorphism group of the state  $\varphi$.  The {\em centralizer} $M^\varphi$ of the state $\varphi$ is by definition the fixed point algebra of $(M, \sigma^\varphi)$.  The {\em continuous core} of $M$ with respect to $\varphi$, denoted by $\core_\varphi(M)$, is the crossed product von Neumann algebra $M \rtimes_{\sigma^\varphi} \R$.  The natural inclusion $\pi_\varphi: M \to \core_\varphi(M)$ and the unitary representation $\lambda_\varphi: \R \to \core_\varphi(M)$ satisfy the {\em covariance} relation
$$
  \lambda_\varphi(t) \pi_\varphi(x) \lambda_\varphi(t)^*
  =
  \pi_\varphi(\sigma^\varphi_t(x))
  \quad
  \text{ for all }
  x \in M \text{ and all } t \in \R.
$$
Put $\rL_\varphi (\R) = \lambda_\varphi(\R)\dpr$. There is a unique faithful normal conditional expectation $\rE_{\rL_\varphi (\R)}: \core_{\varphi}(M) \to \rL_\varphi(\R)$ satisfying $\rE_{\rL_\varphi (\R)}(\pi_\varphi(x) \lambda_\varphi(t)) = \varphi(x) \lambda_\varphi(t)$ for all $x \in M$ and all $t \in \R$. The faithful normal semifinite weight defined by $f \mapsto \int_{\R} \exp(-s)f(s) \, {\rm d}s$ on $\rL^\infty(\R)$ gives rise to a faithful normal semifinite weight $\Tr_\varphi$ on $\rL_\varphi(\R)$ {\em via} the Fourier transform. The formula $\Tr_\varphi = \Tr_\varphi \circ \rE_{\rL_\varphi (\R)}$ extends it to a faithful normal semifinite trace on $\core_\varphi(M)$.

Because of Connes's Radon--Nikodym cocycle theorem \cite[Th\'eor\`eme 1.2.1]{Co72} (see also \cite[Theorem VIII.3.3]{Ta03}), the semifinite von Neumann algebra $\core_\varphi(M)$ together with its trace $\Tr_\varphi$ does not depend on the choice of $\varphi$ in the following precise sense. If $\psi \in M_\ast$ is another faithful state, there is a canonical surjective $\ast$-isomorphism
$\Pi_{\varphi,\psi} : \core_\psi(M) \to \core_{\varphi}(M)$ such that $\Pi_{\varphi,\psi} \circ \pi_\psi = \pi_\varphi$ and $\Tr_\varphi \circ \Pi_{\varphi,\psi} = \Tr_\psi$. Note however that $\Pi_{\varphi,\psi}$ does not map the subalgebra $\rL_\psi(\R) \subset \core_\psi(M)$ onto the subalgebra $\rL_\varphi(\R) \subset \core_\varphi(M)$ (and hence we use the symbol $\rL_\varphi(\R)$ instead of the usual $\rL(\R)$).

We start with a rather technical lemma.

\begin{lem}\label{lemma-equivalence}
Let $M$ be any $\sigma$-finite von Neumann algebra endowed with any faithful state $\varphi \in M_\ast$. Then for any projection $p \in M$, there exists a projection $q \in M^\varphi$ such that $p \sim q$ in $M$. 
\end{lem}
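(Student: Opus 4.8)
The plan is to reduce the statement to the comparison theory of projections, using the classical fact that the centre $\mathcal Z(M)$ is pointwise fixed by the modular flow, so that $\mathcal Z(M) \subseteq M^\varphi$. In particular every central projection of $M$ lies in $M^\varphi$, and we may cut freely by such projections: for $z \in \mathcal Z(M)$ the (normalized) restriction $\varphi|_{Mz}$ is again a faithful normal state, its modular flow is $\sigma^\varphi|_{Mz}$ by uniqueness of the KMS flow, and its centralizer is $M^\varphi z$. Write $1 = z_{\rm sf} + z_{\rm III}$, where $z_{\rm sf}, z_{\rm III} \in \mathcal Z(M) \subseteq M^\varphi$ are the central projections carrying the semifinite part and the type ${\rm III}$ part of $M$. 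Since both the relation $\sim$ and the conclusion split over orthogonal central projections, it suffices to treat the corners $Mz_{\rm sf}$ and $Mz_{\rm III}$ separately and then add up the partial isometries obtained on each.

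On the type ${\rm III}$ corner the statement is essentially automatic: every nonzero projection of a type ${\rm III}$ von Neumann algebra is properly infinite, and a properly infinite projection is equivalent to its central support projection by the comparison theorem. Hence $p z_{\rm III} \sim z_M(p z_{\rm III})$, and since $z_M(p z_{\rm III}) \in \mathcal Z(M) \subseteq M^\varphi$, this central projection is itself the desired centralizer projection equivalent to $p z_{\rm III}$.

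The real work lies in the semifinite corner $N := M z_{\rm sf}$, endowed with a faithful normal semifinite trace $\Tr$. Here $\varphi|_N = \Tr(h\,\cdot\,)$ for a nonsingular positive $h$ affiliated with $N$, so that $\sigma^\varphi_t|_N = \Ad(h^{it})$ is \emph{inner} and the centralizer is $N^\varphi = \{h\}' \cap N$. The key point is that the projections of $\{h\}' \cap N$ realize every value of the $N$-central-valued dimension that is attained in $N$. Granting this, the comparison theorem produces, for the given projection $p z_{\rm sf} \in N$, a projection $q \in \{h\}'\cap N = N^\varphi$ with the same central-valued dimension, whence $q \sim p z_{\rm sf}$ in $N$ and a fortiori in $M$. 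To establish the realization statement one decomposes $1_N$ along the spectral resolution of $h$: each atom $f$ of $h$ satisfies $fNf \subseteq \{h\}' \cap N$ (as $h$ is scalar on the range of $f$), so within such a corner every central-valued dimension up to that of $f$ is realized; on the non-atomic part the spectral projections of $h$ form a diffuse abelian subalgebra of $\{h\}' \cap N$ from which projections of arbitrarily small central-valued dimension can be extracted. Assembling these pieces, in the non-factorial case through the direct-integral decomposition over $\mathcal Z(N)$, yields the required surjectivity of the dimension function.

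The main obstacle is precisely this last realization step on the semifinite corner: one must argue, uniformly over the centre, that the inner-flow centralizer $\{h\}' \cap N$ is rich enough to match the central-valued dimension of an arbitrary projection of $N$. Once this is in hand, the three ingredients — the reduction via $\mathcal Z(M) \subseteq M^\varphi$, the comparison theorem applied on each corner, and the direct summation of the partial isometries obtained over $z_{\rm sf}$ and $z_{\rm III}$ — combine to produce a projection $q \in M^\varphi$ with $p \sim q$ in $M$.
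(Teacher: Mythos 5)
Your opening reductions are sound and run parallel to the paper's first step: $\mathcal Z(M)\subseteq M^\varphi$, the properly infinite (in particular type ${\rm III}$) part is equivalent to its central support because $M$ is $\sigma$-finite, and on the semifinite corner $\varphi=\Tr(h\,\cdot\,)$ with $N^\varphi=\{h\}'\cap N$. The genuine gap is exactly the step you single out as the crux, the realization of central-valued dimensions by projections of $N^\varphi$, and your proposed mechanism for it fails. Concretely, take $N=\rL^\infty[0,1]\ovt \mathbf M_2(\C)$ with $\Tr=\int\cdot\,{\rm d}x\otimes\tr_2$ and $h=g\otimes 1_2$ where $g(x)=x+1$. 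Then $h$ is bounded, nonsingular and has \emph{no} atoms (each $E_h(\{\lambda\})=1_{g^{-1}(\lambda)}\otimes 1_2=0$), so your recipe operates entirely in its "non-atomic" branch; but every spectral projection of $h$ has the form $1_E\otimes 1_2$, of fibrewise rank $0$ or $2$. No assembly of such pieces is equivalent to $p=1\otimes e_{11}$, whose fibrewise rank is constantly $1$. The lemma is of course true here, indeed trivially: $h$ is central, so $\varphi$ is a trace and $N^\varphi=N\ni p$; but this witness is invisible to your construction, because the richness of $N^\varphi=\{h\}'\cap N$ comes from the \emph{relative commutant} of $h$, which your argument never accesses beyond $W^\ast(h)$ and atom corners. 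The underlying error is that the atomic/diffuse dichotomy must be taken fibrewise over the centre, not for $h$ as a single operator: in the example $h$ is a scalar in every fibre (so fibrewise it contributes no projections at all), yet globally it is atomless, and your dichotomy misclassifies the situation.

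The repair is essentially forced to be the paper's proof: enlarge $\{h^{{\rm i}t} : t\in\R\}\dpr$ to a MASA $A\subset N$, note $A\subseteq\{h\}'\cap N=N^\varphi$, and then invoke the genuinely nontrivial fact that every projection of a semifinite von Neumann algebra is equivalent to a projection of a prescribed MASA; the paper gets this from Kadison's results \cite{Ka82} in types ${\rm I}$ and ${\rm II_1}$, plus a Zorn-type exhaustion (the Claim in its proof) for type ${\rm II_\infty}$. Two further points. First, your fallback on direct-integral decomposition over the centre is unavailable at the stated generality: a $\sigma$-finite von Neumann algebra need not have separable predual, whereas the paper's maximality argument is separability-free. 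Second, on the semifinite corner you should split off the properly infinite part of $pz_{\rm sf}$ before invoking any dimension comparison, since "equal central-valued dimension implies equivalence" is only meaningful for finite projections; this is a minor fixable point, unlike the realization step.
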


\begin{proof}
Replacing $M$ with $M z_M(p)$ with the central support $z_M(p)$, we may and will assume that $z_M(p) = 1$. By \cite[Proposition 6.3.7]{KR97}, one can decompose $p = p_1 + p_2$ along $M = M_1\oplus M_2$ so that $p_1$ is finite and $p_2$ is properly infinite. Since $M$ is $\sigma$-finite,  $p_2$ is equivalent to $1_{M_2}$, which clearly belongs to $M^\varphi$. Hence we may and will assume that $p$ is finite with $z_M(p) = 1$ and hence $M$ is semifinite. Write $\varphi = \Tr(h\,\cdot\,)$ for some nonsingular positive selfadjoint operator $h$ affiliated with $M$ and take a MASA $A \subset M$ that contains $\{h^{{\rm i}t}\,|\,t \in \R\}\dpr$. We have $A \subset M^\varphi$. Since $A$ is a MASA with expectation, $A$ is generated by finite projections in $M$ (see e.g.\ \cite[Proposition 4.4]{To71} but this case can be proved without such a general assertion). We will prove that $p$ is equivalent in $M$ to a projection in $A$. Thanks to \cite[Corollaries 3.8, 3.13]{Ka82}, we may and will assume, by decomposing $M$ into the components of type ${\rm I}_n$, ${\rm II_1}$ and ${\rm II_\infty}$, that $M$ is of type ${\rm II_\infty}$. Here is a claim.

\begin{claim}
For any nonzero finite projection $e \in M$ and any nonzero projection $f \in A$ such that $z_M(e) z_M(f) \neq 0$, there exist nonzero projections $e' \in eMe$ and $f' \in Af$ such that $e'$ is equivalent to $f'$ in $M$. 
\end{claim}

\begin{proof}[Proof of the Claim]
As we observed before, there is an increasing sequence of projections $r_n \in A$ that are finite in $M$ and such that $r_n \to 1$ $\sigma$-strongly. By assumption, there exists $x \in M$ such that $exf \neq 0$. Then there exists $n_0$ so that $exfr_{n_0} \neq 0$. Taking the polar decomposition of the element $exfr_{n_0}$, we can find a nonzero subprojection $e'$ of $e$ such that $e'$ is equivalent in $M$ to a subprojection $s$ of $fr_{n_0}$. Observe that $s$ may not be in $A$. Hence we have to work further. Consider the MASA $Afr_{n_0}$ in the type ${\rm II_1}$ von Neumann subalgebra $fr_{n_0}Mfr_{n_0}$. By \cite[Proposition 3.13]{Ka82}, we can find a projection $f' \in Afr_{n_0} \subset A$ that is equivalent to $s$ in $M$.
\end{proof}

By Zorn's lemma, let $((p_i,q_i))_{i\in I}$ be a maximal family of pairs of projections such that $(p_i)_{i\in I}$ and $(q_i)_{i\in I}$ are families of pairwise orthogonal projections, all $p_i$ are subprojections of $p$, all $q_i$ are in $A$ and $p_i \sim q_i$ for every $i\in I$. Suppose that $e:= p - \sum_{i \in I} p_i \neq 0$ and put $f := 1 - \sum_{i\in I} q_i$. Observe that the central support of $f$ must be equal to $1$ since $\sum_{i\in I} q_i \sim \sum_{i\in I} p_i \leq p$ is finite and $M$ is of type ${\rm II_\infty}$  and hence properly infinite. Therefore, by the above claim, there exist nonzero projections $p_0, q_0$ such that $p_0 \leq e$, $q_0 \leq f$, $q_0 \in A$ and $p_0 \sim q_0$, a contradiction to the maximality of the family $((p_i,q_i))_{i\in I}$. Consequently, $p = \sum_{i \in I} p_i \sim \sum_{i \in I} q_i \in A$. Hence we are done. 
\end{proof}

The following simple application of the previous lemma will turn out to be useful for Popa's intertwining techniques in the type ${\rm III}$ setting.

\begin{prop}\label{prop-with-expectation} Let $A \subset M$ be any unital inclusion of $\sigma$-finite von Neumann algebras with expectation and $p \in A' \cap M$ any nonzero projection. Then $Ap \subset pMp$ is also with expectation. 
\end{prop}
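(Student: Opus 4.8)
The plan is to reduce the statement, by means of Lemma~\ref{lemma-equivalence}, to the situation where the projection lies in the centralizer of a well-chosen state on $M$, where the existence of the expectation becomes a direct consequence of Takesaki's theorem on the existence of conditional expectations \cite{Ta03}.

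First I would fix a faithful normal state $\varphi_A \in A_\ast$ and set $\varphi = \varphi_A \circ \rE_A \in M_\ast$, which is a faithful normal state on $M$ for which $\rE_A$ is state-preserving. By Takesaki's theorem \cite{Ta03}, $A$ is then globally invariant under $\sigma^\varphi$ and $\sigma^\varphi_t|_A = \sigma^{\varphi_A}_t$ for all $t \in \R$. A short computation confirms that $B := A' \cap M$ is likewise globally invariant under $\sigma^\varphi$: for $x \in B$ and $a \in A$ one has $\sigma^\varphi_t(x)\, a = \sigma^\varphi_t(x\, \sigma^\varphi_{-t}(a)) = \sigma^\varphi_t(\sigma^\varphi_{-t}(a)\, x) = a\, \sigma^\varphi_t(x)$, since $\sigma^\varphi_{-t}(a) \in A$ commutes with $x$. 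Consequently $B$ is with expectation and, writing $\psi := \varphi|_B$, we have $\sigma^\psi_t = \sigma^\varphi_t|_B$, whence $B^\psi = B \cap M^\varphi$.

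Next I would apply Lemma~\ref{lemma-equivalence} to the $\sigma$-finite von Neumann algebra $B$ equipped with the faithful normal state $\psi$. As $p \in A' \cap M = B$, this yields a projection $q \in B^\psi = (A' \cap M) \cap M^\varphi$ together with a partial isometry $v \in B$ satisfying $v^* v = p$ and $v v^* = q$. The crucial point is that $v$ commutes with $A$, so that $\Ad(v) : pMp \to qMq$ is a normal $\ast$-isomorphism carrying $Ap$ onto $Aq$, because $v (ap) v^* = a\, v p v^* = aq$ for every $a \in A$.

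It then suffices to produce a faithful normal conditional expectation $qMq \to Aq$ and transport it back through $\Ad(v)$. Since $q \in M^\varphi$, the state $\varphi_q = \varphi(q \, \cdot \, q)/\varphi(q)$ is a faithful normal state on $qMq$ whose modular automorphism group is $\sigma^\varphi|_{qMq}$. As $q$ commutes with $A$ and $\sigma^\varphi_t(a) = \sigma^{\varphi_A}_t(a) \in A$, one gets $\sigma^{\varphi_q}_t(aq) = \sigma^\varphi_t(a)\, q \in Aq$, so $Aq$ is globally invariant under $\sigma^{\varphi_q}$; Takesaki's theorem \cite{Ta03} then delivers a faithful normal conditional expectation $\rE^q : qMq \to Aq$, and $\rE^p := \Ad(v^*) \circ \rE^q \circ \Ad(v)$ is the sought faithful normal conditional expectation $pMp \to Ap$. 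I expect the only genuine obstacle to be that $p$ itself need not lie in the centralizer of any convenient state on $M$, so that the corner $pMp$ carries no obvious modular data compatible with $A$; this is precisely what Lemma~\ref{lemma-equivalence} repairs, by replacing $p$ with an equivalent projection $q$ that stays inside $A' \cap M$ while lying in $M^\varphi$.
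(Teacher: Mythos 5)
Your proof is correct and follows essentially the same route as the paper's: choose a faithful normal state on $M$ under whose modular group $A$ (and hence $A' \cap M$) is globally invariant, apply Lemma~\ref{lemma-equivalence} to $A' \cap M$ with the restricted state so as to replace $p$ by an equivalent projection $q$ lying in the centralizer of the ambient state, with the equivalence implemented by a partial isometry commuting with $A$, and then transport the expectation $qMq \to Aq$ (obtained from Takesaki's theorem) back through $\Ad$ of that partial isometry. The only difference is that you spell out details the paper leaves implicit, such as the invariance computation for $A' \cap M$ and the modular theory of the corner $qMq$.
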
 
\begin{proof} By assumption we may choose a faithful state $\psi \in M_\ast$ such that  $A$ is globally invariant under the modular automorphism group $\sigma^\psi$  and, in particular, so is $A' \cap M$. Put $\varphi := \psi|_{A' \cap M}$ and observe that $(A' \cap M)^\varphi \subset M^\psi$. Applying Lemma \ref{lemma-equivalence} to $p \in A' \cap M$ with $\varphi$ we obtain a partial isometry $v \in A'\cap M$ such that $vv^* = p$ and $v^*v \in (A' \cap M)^\varphi \subset M^\psi$, the latter of which shows that $Av^*v \subset v^*v M v^*v$ is with expectation. Since $v \in A' \cap M$, the inclusions $Av^*v \subset v^*v M v^*v$ and $Ap \subset pMp$ are conjugate to each other via $\Ad(v)$ and hence $Ap \subset pMp$ is with expectation.   
\end{proof}

Recall that for any inclusion of von Neumann algebras $A \subset M$, the {\em group of normalizing unitaries} is defined by 
$$\mathcal N_M(A) = \{u \in \mathcal U(M) : uAu^* = A\}.$$
The von Neumann algebra $\mathcal N_M(A)\dpr$ is  called the {\em normalizer of} $A$ {\em inside} $M$. The next result is a variation on \cite[Lemma 3.5]{Po03} and will be used in the proof of  Theorem \ref{thm-normalizer-general-AFP}.

\begin{prop}\label{prop-corner-normalizer}
Let $M$ be any $\sigma$-finite von Neumann algebra and $A \subset M$ any von Neumann subalgebra. Assume moreover that $A' \cap M = \mathcal{Z}(A)$.  Then for any nonzero projection $p \in \mathcal{Z}(A)$, we have $$\mathcal{N}_{pMp}(Ap)\dpr = p\big(\mathcal{N}_M(A)\dpr\big)p.$$ 
\end{prop}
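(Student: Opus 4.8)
The plan is to prove the two inclusions separately; the inclusion $\mathcal{N}_{pMp}(Ap)\dpr \subseteq p(\mathcal{N}_M(A)\dpr)p$ is straightforward, and the reverse inclusion is the main point. Throughout I will use that the hypothesis descends to the corner: since $p \in \mathcal{Z}(A)$, a direct computation gives $(Ap)' \cap pMp = (A' \cap M)p = \mathcal{Z}(A)p = \mathcal{Z}(Ap)$, and moreover $p \in \mathcal{Z}(A) \subseteq A \subseteq \mathcal{N}_M(A)\dpr$, so that $p(\mathcal{N}_M(A)\dpr)p$ is genuinely a corner von Neumann algebra.

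For the inclusion $\subseteq$, given a normalizing unitary $\tilde u \in \mathcal{N}_{pMp}(Ap)$, I would simply complete it to a unitary of $M$ by setting $u_0 := \tilde u + (1-p)$. Because $p \in \mathcal{Z}(A)$ commutes with $A$ and $\tilde u = p\tilde u p$, a one-line computation shows $u_0 a u_0^* = \tilde u(ap)\tilde u^* + a(1-p) \in A$ for every $a \in A$, and symmetrically for $u_0^*$; hence $u_0 \in \mathcal{N}_M(A)$ and $\tilde u = u_0 p \in p(\mathcal{N}_M(A)\dpr)p$. Passing to generated von Neumann algebras gives the inclusion.

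For the reverse inclusion $\supseteq$, since a product of normalizing unitaries is again one and $p \in \mathcal{N}_M(A)\dpr$, it suffices to show $pup \in \mathcal{N}_{pMp}(Ap)\dpr$ for each fixed $u \in \mathcal{N}_M(A)$. Writing $q := upu^*$ and $q' := u^*pu$, which are projections in $\mathcal{Z}(A)$ because $\operatorname{Ad}(u)$ preserves $\mathcal{Z}(A)$, one checks that $v := pup$ is a partial isometry with $vv^* = p\wedge q =: r$ and $v^*v = p \wedge q' =: s$, and that it is a ``groupoid normalizer'' in the sense that $v(As)v^* = Ar$ with $r, s \in \mathcal{Z}(Ap)$; the conjugation $z \mapsto vzv^*$ restricts to a $\ast$-isomorphism $\beta$ of the abelian algebras $\mathcal{Z}(A)s \to \mathcal{Z}(A)r$. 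I would then split $s$ according to the dynamics of $\beta$: by a Zorn argument fix a maximal orthogonal family of nonzero projections $z_k \in \mathcal{Z}(A)$, $z_k \le s$, that are wandering in the sense $z_k \wedge \beta(z_k) = 0$, and set $z = \sum_k z_k$ and $s_0 = s - z$. On each wandering piece the source $z_k$ and range $vz_kv^*$ are orthogonal central projections, so $\sigma_k := vz_k + z_kv^* + (p - z_k - vz_kv^*)$ is a self-adjoint unitary of $pMp$ normalizing $Ap$, whence $vz_k = (vz_kv^*)\,\sigma_k\, z_k \in \mathcal{N}_{pMp}(Ap)\dpr$ and, summing, $vz \in \mathcal{N}_{pMp}(Ap)\dpr$. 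On the complementary part, maximality forces that no nonzero central subprojection of $s_0$ is wandering, which in turn forces $\beta|_{s_0} = \operatorname{id}$; then $vs_0$ is a unitary of $s_0Ms_0$ commuting with $\mathcal{Z}(A)s_0$ and normalizing $As_0$, and the completion $u_0 := vs_0 + (p - s_0) \in \mathcal{N}_{pMp}(Ap)$ yields $vs_0 = u_0 s_0 \in \mathcal{N}_{pMp}(Ap)\dpr$. Since $v = vz + vs_0$, this gives $pup \in \mathcal{N}_{pMp}(Ap)\dpr$, completing the reverse inclusion.

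The main obstacle is precisely the overlap between the source and range projections of $v = pup$, which coincide only on the fixed part of $\beta$: the symmetry trick realizing a groupoid normalizer inside the normalizer algebra works only when source and range are orthogonal, so one must separate the wandering part (handled by symmetries) from the fixed part (handled by corner completion). The technical heart is the standard but nontrivial fact that the absence of wandering central subprojections forces $\beta$ to be the identity there, a pointwise statement about the partial transformation of the spectrum of $\mathcal{Z}(A)$ induced by $\operatorname{Ad}(u)$.
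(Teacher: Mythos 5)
Your proposal is correct, and it takes a genuinely different route from the paper's proof. The paper works with $v = pup$ by first extending it, via a Zorn argument, to a maximal groupoid-normalizing partial isometry $w$ inside $N := \mathcal{N}_M(A)\dpr$ with $wv^*v = v = vv^*w$, and then cutting $w$ by projections $z_1, z_2, z_3 \in \mathcal{Z}(N)$ into a proper isometry, a proper coisometry and a unitary; the unitary piece is completed directly, while the properly isometric piece $w_1$ is recovered as a $\sigma$-strong limit of explicit unitaries cyclically permuting the infinite orthogonal tower $e_n = w_1^{n-1}e_1 w_1^{*\,n-1}$ generated by the deficiency projection $e_1 = z_1 - w_1w_1^*$. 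The standing hypothesis $A' \cap M = \mathcal{Z}(A)$ enters there precisely to guarantee $\mathcal{Z}(N) \subseteq \mathcal{Z}(A)$, so that these central cuts are still groupoid normalizers. You skip the maximal extension entirely and instead decompose the source $s = v^*v$ according to the dynamics of $\beta = \Ad(v)$ on $\mathcal{Z}(A)$: a maximal wandering part, absorbed by self-adjoint symmetries (available exactly because source and range pieces are orthogonal there), and its complement $s_0$, where $\beta$ is forced to be the identity and direct completion by $p - s_0$ works. Your route buys two things: it never invokes $A' \cap M = \mathcal{Z}(A)$ (only $p \in \mathcal{Z}(A)$ is used), so it actually proves a more general statement than the proposition; and its only limits are $\sigma$-strongly convergent sums over orthogonal pieces, with no tower-approximation argument. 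The paper's route, in exchange, needs no dynamical input beyond its maximality argument.

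The one step you assert rather than prove, and correctly identify as the technical heart, does require care: since $\beta$ is only a partial isomorphism from $\mathcal{Z}(A)s$ onto $\mathcal{Z}(A)r$ with $s \neq r$ in general, the usual complement trick for automorphisms of abelian von Neumann algebras does not suffice by itself, and an appeal to points of the spectrum is delicate here because $\mathcal{Z}(A)$ need not have separable predual. Fortunately a purely algebraic argument works. For every projection $e \le s_0$, the projection $e - e \wedge \beta(e)$ is wandering and lies below $s_0$, hence vanishes, so $e \le \beta(e)$; applying this to $s_0 - e$ and multiplying by $\beta(e)$ yields $s_0\beta(e) = e$. It remains to exclude $d := \beta(s_0) - s_0 \neq 0$, and this is where surjectivity of $\beta$ is needed: since $\beta(s) = r$ and $s = z + s_0$, we have $\beta(z) = r - \beta(s_0)$; writing $d = \beta(e_d)$ for a projection $e_d \le s$, the two bounds $\beta(e_d z) \le \beta(z) = r - \beta(s_0)$ and $\beta(e_d z) \le d \le \beta(s_0)$ force $e_d z = 0$ by injectivity, so $e_d \le s_0$, and then $e_d = s_0 \beta(e_d) = s_0 d = 0$, whence $d = 0$. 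Consequently $\beta(e) \le \beta(s_0) = s_0$ and $\beta(e) = s_0\beta(e) = e$ for all $e \le s_0$, which is exactly what your completion step needs. With this argument supplied, your proof is complete.
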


\begin{proof} 
For any $u \in \mathcal{N}_{pMp}(Ap)$, we have $v := u + (1-p) \in \mathcal{N}_M(A)$ and $pvp = u$. Thus, $\mathcal{N}_{pMp}(Ap) \subset p\mathcal{N}_M(A)p$ and hence the inclusion ($\subset$) holds without taking double commutant. Therefore, it suffices to prove the reverse inclusion relation. 

Write $N :=  \mathcal{N}_M(A)\dpr$ for simplicity. Let $u \in \mathcal{N}_M(A)$ be an arbitrary element. Set $v := pup$. Since $\Ad(u)|_{A}$ gives a unital $\ast$-automorphism of $A$, we have $u\mathcal{Z}(A)u^* = \mathcal{Z}(A)$ so that $v^* v = u^* p u \, p$ and $vv^* = u p u^* \, p$ are projections in $\mathcal{Z}(A)$. In particular, $v$ is a partial isometry. Moreover, it is plain to see that for each $a \in A$, we have 
$$
vav^* = (uau^*)(upu^*)p \in Av v^* \quad \text{and} \quad v^* av = (u^*au)(u^* p u)p \in Av^*v.
$$
Hence, $vAv^* = Avv^*$ and $v^* A v = Av^* v$. Observe that $\mathcal{Z}(N) \subset A' \cap M = \mathcal{Z}(A)$.

\begin{claim}
There exists a partial isometry $w \in N$ such that (i) $w^* w, ww^* \in \mathcal{Z}(A)p$, (ii) $wAw^* = Aww^*$, $w^* Aw = Aw^* w$, (iii) $wv^* v = v = vv^* w$, and moreover that (iv) with letting $z := z_N(w^*w)=z_N(ww^*) \in \mathcal Z(A)p$ (see the notation at the beginning of this section), there exist orthogonal projections $z_1, z_2, z_3 \in \mathcal{Z}(N)$ with $z_1 + z_2 +z_3 = z$ so that 
\begin{itemize} 
\item $w^* w z_1 = z_1$ but $ww^* z_1 \lneqq z_1$, 
\item $w^*w z_2 \lneqq z_2$ but $ww^* z_2 = z_2$ and
\item $w^*wz_3 = z_3 = ww^* z_3$.
\end{itemize} 
\end{claim}

\begin{proof}[Proof of the Claim]
To this end, choose a maximal family of partial isometries $w_i \in M$ such that $(w^*_i w_i)_i$ and $(w_i w_i^*)_i$ are families of pairwise orthogonal projections, $w_i A w_i^* = Aw_i w_i^*$, $w_i^* A w_i = Aw_i^* w_i$, $w_i^* w_i \leq p - v^*v$ and $w_i w_i^* \leq p - vv^*$. Then $w := v + \sum_i w_i$ clearly enjoys (i)--(iii). 

Choose a maximal orthogonal family of projections $z_{3i} \in \mathcal{Z}(N)z$ such that $w^*wz_{3i} = z_{3i} = ww^* z_{3i}$. Set $z_3 := \sum_i z_{3i}$. Choose a maximal orthogonal family of projections $z_{2j} \in \mathcal{Z}(N)(z-z_3)$ such that $ww^* z_{2j} = z_{2j}$. Set $z_2 := \sum_j z_{2j}$. By construction, we have $w^*wz_{3} = z_{3} = ww^* z_{3}$ and $w^*wz_2 \lneqq z_2 = ww^* z_2$. Set $z_1 := z-z_2-z_3$. Assume that $z_1 \neq 0$; otherwise we are already done. By the maximality of the families $(z_{2j})_j$ and $(z_{3i})_i$, observe that no nonzero projection $z' \in \mathcal{Z}(N)z_1$ enjoys $ww^* z' = z'$. This means that the central support of $z_1 - ww^*z_1$ in $N$ is equal to $z_1$. Suppose that $w^* w z_1 \lneqq z_1$. Then $(z_1 - ww^* z_1)N(z_1 - w^*w z_1) \neq \{0\}$ must hold. Hence there exists $x \in \mathcal{N}_M(A)$ such that $(z_1 - ww^* z_1)x(z_1 - w^*w z_1) \neq 0$. Observe that $z_1 \in \mathcal{Z}(N) \subset A' \cap M = \mathcal{Z}(A)$. Thus the first part (dealing with the $v$) shows that $w_0 := (z_1 - ww^* z_1)x(z_1 - w^*w z_1) \in N$ is a new nonzero partial isometry such that $w_0^* w_0 \in \mathcal{Z}(A)(z_1 - w^*w z_1)$, $w_0 w_0^* \in \mathcal{Z}(A)(z_1 - ww^* z_1)$, $w_0 Aw^*_0 = Aw_0 w^*_0$ and $w_0^* A w_0 = Aw_0^* w_0$, a contradiction due to the maximality of the family $(w_i)_i$. Hence $w^* w z_1 = z_1$ (and $ww^* z_1 \lneqq z_1$). Thus we have proved the claim. 
\end{proof}

Write $w_k := wz_k$, $k=1,2,3$. Observe that $\mathcal{Z}(N) \subset A' \cap M = \mathcal{Z}(A)$  and hence each $w_k$, in place of $w$, satisfies (i)--(ii) in the above claim. We will first deal with $w_1$ when it is nonzero. Set $e_1 := z_1-w_1w_1^* \neq 0$ and  $e_i:= w_1^{i-1} e_1 w_1^*{}^{i-1}$, $i=2,3,\dots$. Observe that all the projections $e_n$ are in $\mathcal{Z}(A)z_1$, since $\Ad(w_1)|_{Az_1}$ defines a unital $\ast$-isomorphism between $Az_1$ and $Aw_1w_1^*$ with $w_1w_1^* \in \mathcal{Z}(A)$. We claim that the projections $e_n$ are pairwise orthogonal. Indeed, if $i \lneqq j$, we have $0 \leq e_i e_j = w_1^{i-1}e_1 w_1^*{}^{i-1} w_1^{j-1}e_1 w_1^*{}^{j-1} = w_1^{i-1} (e_1 w_1^{j-i} e_1 w_1^*{}^{j-i})w_1^*{}^{i-1} \leq w_1^{i-1} ((z_1-w_1w_1^*)(w_1w_1^*))w_1^*{}^{i-1} = 0$ so that $e_i e_j = 0$. We also claim that $w_1 fw_1^* = f$ with $f := z_1-\sum_{n\geq1} e_n$. Indeed, $w_1 f w_1^* = w_1 w_1^* - \sum_{n\geq2}e_n = z_1 - (z_1- w_1w_1^*) - \sum_{n\geq2}e_n = z_1 - \sum_{n\geq1} e_n = f$. Put $w_1(n) := w_1(\sum_{i=1}^{n-1}e_i) + w_1^*{}^{n-1}e_n + \sum_{i\geq n+1}e_i + w_1 f + (p-z_1)$. Clearly, all the elements $w_1(n)$ are in $\mathcal{N}_{pMp}(Ap)$ and $w_1(n)z_1 = z_1 w_1(n)$ converges to $w_1$ as $n \to \infty$ and hence $w_1 \in \mathcal{N}_{pMp}(Ap)\dpr$. Similarly, we can prove that $w_2^* \in \mathcal{N}_{pMp}(Ap)\dpr$, implying that $w_2 \in \mathcal{N}_{pMp}(Ap)\dpr$. Finally, it is trivial that $w_3 + (p-z_3) \in \mathcal{N}_{pMp}(Ap)$, implying that $w_3 \in \mathcal{N}_{pMp}(Ap)\dpr$. Consequently, we have $v = vv^*w = vv^*(w_1 + w_2 + w_3) \in \mathcal{N}_{pMp}(Ap)\dpr$. Hence we are done.   
\end{proof}

We point out that we do not need to assume the inclusion $A \subset M$ to be with expectation in Proposition \ref{prop-corner-normalizer}.

\subsection*{Popa's intertwining techniques}

To fix notation, let $M$ be any $\sigma$-finite von Neumann algebra, $1_A$ and $1_B$ any nonzero projections in $M$, $A\subset 1_AM1_A$ and $B\subset 1_BM1_B$ any von Neumann subalgebras. Popa introduced his powerful {\em intertwining-by-bimodules techniques} in \cite{Po01} in the case when $M$ is finite and more generally in \cite{Po03} in the case when $M$ is endowed with an almost periodic faithful normal state $\varphi$ for which $1_A \in M^\varphi$, $A \subset 1_A M^\varphi 1_A$ and $1_B \in M^\varphi$, $B \subset 1_B M^\varphi 1_B$. It was showed in \cite{HV12,Ue12} that Popa's intertwining techniques extend to the case when $B$ is finite and with expectation in $1_B M 1_B$ and $A \subset 1_A M 1_A$ is any von Neumann subalgebra.

In this paper, we will need the following generalization of \cite[Theorem A.1]{Po01} in the case when $A \subset 1_A M 1_A$ is any finite von Neumann subalgebra with expectation and $B \subset 1_B M 1_B$ is any von Neumann subalgebra with expectation.

\begin{thm}[{\cite[Theorem 4.3]{HI15}}]\label{theorem-intertwining}
Let $M$ be any $\sigma$-finite von Neumann algebra, $1_A$ and $1_B$ any nonzero projections in $M$, $A\subset 1_AM1_A$ and $B\subset 1_BM1_B$ any von Neumann subalgebras with faithful normal conditional expectations $\rE_A : 1_A M 1_A \to A$ and $\rE_B : 1_B M 1_B \to B$ respectively. Assume moreover that $A$ is a finite von Neumann algebra.

Then the following conditions are equivalent:

\begin{enumerate}
\item There exist projections $e \in A$ and  $f \in B$, a nonzero partial isometry $v \in eMf$ and a unital normal $\ast$-homomorphism $\theta : eAe \to fBf$ such that the inclusion $\theta(eAe) \subset fBf$ is with expectation and $av = v \theta(a)$ for all $a \in eAe$.
\item There exist $n \geq 1$, a projection $q \in \mathbf M_n (B)$, a nonzero partial isometry $v\in \mathbf{M}_{1, n}(1_A M)q$ and a unital normal $\ast$-homomorphism $\pi\colon A \rightarrow q\mathbf{M}_n(B)q$ such that the inclusion $\pi(A) \subset q \mathbf M_n (B) q$ is with expectation and $av = v\pi(a)$ for all $a\in A$. 
\item There exists no net $(w_i)_{i \in I}$ of unitaries in $\mathcal U(A)$ such that $\lim_i \rE_{B}(b^*w_i a) = 0$ $\sigma$-strongly for all $a,b\in 1_AM1_B$.
\end{enumerate}
If one of the above conditions is satisfied, we will say that $A$ {\em embeds with expectation into} $B$ {\em inside} $M$ and write $A \preceq_M B$.
\end{thm}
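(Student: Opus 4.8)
The plan is to establish the cyclic chain $(1) \Rightarrow (2) \Rightarrow (3) \Rightarrow (1)$, where the first two implications are routine and $(3) \Rightarrow (1)$ carries all the analytic weight. The organizing tool is Jones' basic construction adapted to the nontracial setting. Since $B \subset 1_B M 1_B$ is with expectation $\rE_B$, I fix a faithful normal state $\varphi$ on $M$ with $\varphi \circ \rE_B = \varphi$ on $1_B M 1_B$ (possible as $\rE_B$ is a faithful normal expectation) and form the basic construction $\langle M, e_B \rangle$, where $e_B$ is the Jones projection implementing $\rE_B$ on $\rL^2(M)$. By the theory of operator-valued weights this is a semifinite von Neumann algebra carrying a canonical faithful normal semifinite trace $\Tr$ obeying the Jones relation $e_B x e_B = \rE_B(x) e_B$ for $x \in 1_B M 1_B$ together with $\Tr(x e_B) = \varphi(x)$; in particular $\Tr(a\, e_B\, a^*) = \|a\|_\varphi^2 < \infty$ for every $a \in 1_A M 1_B$. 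The relevant bimodule is $1_A \rL^2(M) 1_B$, on which $A$ acts on the left and $B$ on the right, and the intertwiner data will be extracted from $A' \cap 1_A \langle M, e_B \rangle 1_A$.

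For $(1) \Rightarrow (2)$ I amplify: given $e \in A$, $f \in B$, $v \in eMf$ and $\theta : eAe \to fBf$, the finiteness of $A$ implies that over the central support $z_A(e)$ the unit is subequivalent to finitely many copies of $e$, and assembling the corresponding copies of the pair $(v, \theta)$ produces $n \geq 1$, a projection $q \in \mathbf{M}_n(B)$, a nonzero partial isometry $v \in \mathbf{M}_{1,n}(1_A M) q$ and a unital normal $\ast$-homomorphism $\pi : A \to q\mathbf{M}_n(B)q$, with the inclusion $\pi(A) \subset q\mathbf{M}_n(B)q$ inheriting the expectation. For $(2) \Rightarrow (3)$ I argue by contraposition. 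Replacing $q$ by $v^* v \in \pi(A)' \cap q\mathbf{M}_n(B)q$, I may assume $v^* v = q$, so that $v^* w v = q\pi(w) = \pi(w)$ for every $w \in \mathcal{U}(A)$. If some net $(w_i)$ in $\mathcal{U}(A)$ satisfied $\rE_B(b^* w_i a) \to 0$ $\sigma$-strongly for all $a, b \in 1_A M 1_B$, then writing $v = (v_1, \dots, v_n)$ the entries $\pi(w_i)_{jk} = v_j^* w_i v_k = \rE_B(v_j^* w_i v_k)$ would tend to $0$ $\sigma$-strongly, whence $\pi(w_i) \to 0$ $\sigma$-strongly; this contradicts $\pi(w_i)^* \pi(w_i) = q \neq 0$, and establishes $(3)$.

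The core is $(3) \Rightarrow (1)$. Assuming $(3)$, a standard reformulation yields $\delta > 0$ and finitely many $a_1, \dots, a_m, b_1, \dots, b_m \in 1_A M 1_B$ with $\sum_l \|\rE_B(b_l^* w a_l)\|_\varphi^2 \geq \delta$ for every $w \in \mathcal{U}(A)$. Setting $T_0 = \sum_l a_l e_B a_l^*$ and $S_0 = \sum_k b_k e_B b_k^*$, both positive and $\Tr$-finite, the Jones relation gives, for all $w \in \mathcal{U}(A)$,
\[
\Tr\big(S_0\, w T_0 w^*\big) = \sum_{k,l} \|\rE_B(b_k^* w a_l)\|_\varphi^2 \geq \delta .
\]
I then pass to the unique element $T$ of minimal $\|\cdot\|_{2,\Tr}$-norm in the $\|\cdot\|_{2,\Tr}$-closed convex hull of $\{w T_0 w^* : w \in \mathcal{U}(A)\}$; uniqueness forces $\Ad(w)$-invariance, so $0 \le T \in A' \cap 1_A \langle M, e_B \rangle 1_A$ with $\Tr(T) \leq \Tr(T_0) < \infty$, while the lower bound passes to the convex hull to give $\Tr(S_0 T) \geq \delta$, hence $T \neq 0$. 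A nonzero spectral projection $p$ of $T$ is thus a nonzero finite-trace projection in $A' \cap \langle M, e_B \rangle$; it corresponds to a right $B$-submodule of $1_A \rL^2(M) 1_B$ that is finitely generated as a Hilbert $B$-module, and the left action of the finite algebra $A$ on this finite module, after compression by a suitable projection, produces projections $e \in A$, $f \in B$, a nonzero partial isometry $v \in eMf$ and a unital normal $\ast$-homomorphism $\theta : eAe \to fBf$ with $av = v\theta(a)$. The compatibility of $\Tr$ with $p$ simultaneously certifies that $\theta(eAe) \subset fBf$ is with expectation, which is exactly $(1)$.

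The main obstacle is the type ${\rm III}$ nature of $M$ and $B$: there is no trace on the ambient algebra to run Popa's original Hilbert-module argument, and no almost periodicity as in the state-dependent version of \cite{Po03}. The decisive replacement is the canonical semifinite trace on the basic construction of an \emph{arbitrary} inclusion with expectation, on which the $\Ad(\mathcal{U}(A))$-averaging is carried out — and this is precisely where the hypothesis that $A$ is \emph{finite} is indispensable, since it guarantees that averaging over $\mathcal{U}(A)$ admits a well-defined minimal-norm fixed point in $A' \cap \langle M, e_B \rangle$. The genuinely new point beyond the tracial theory is the final step: certifying that the extracted corner inclusion $\theta(eAe) \subset fBf$ is with expectation does not come for free and must be read off from the interaction of $\Tr$ with the finite-trace projection $p$.
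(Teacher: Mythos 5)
First, a point of order: the paper does \emph{not} prove this statement. Theorem \ref{theorem-intertwining} is imported verbatim from \cite[Theorem 4.3]{HI15}, so there is no internal proof to compare against; your attempt must therefore be measured against the full generality of the statement, in which $B \subset 1_B M 1_B$ is an \emph{arbitrary} von Neumann subalgebra with expectation — in particular of type ${\rm III}$, which is exactly the case this paper needs (in the Main Theorem the intertwining targets $N_j$ are type ${\rm III}$ factors). This is where your proof of $(3) \Rightarrow (1)$ fails at its very first step. The basic construction for $B \subset 1_B M 1_B$ is by definition the commutant of $J B J$ on the relevant $\rL^2$-space; since $J \cdot J$ is an anti-isomorphism and a von Neumann algebra is semifinite if and only if its commutant is, $\langle M, e_B \rangle$ (more precisely, the corner of it in which your elements $a e_B a^*$, $a \in 1_A M 1_B$, live) is semifinite \emph{if and only if $B$ is semifinite}. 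For type ${\rm III}$ $B$ there is no faithful normal semifinite trace on it whatsoever: the canonical object attached to $\rE_B$ is only the dual operator-valued weight $\widehat{\rE}_B$, and for a state $\varphi$ on $M$ the weight $\varphi \circ \widehat{\rE}_B$ has modular group restricting to $\sigma^\varphi$ on $M$, so it is a trace only when $M$ itself is finite. Everything downstream collapses: (a) the identity $\Tr(S_0\, w T_0 w^*) = \sum_{k,l} \|\rE_B(b_k^* w a_l)\|_\varphi^2$ is a genuinely tracial computation (it cyclically permutes under $\Tr$); with the dual weight one only obtains $\varphi\bigl(b_k \rE_B(b_k^* w a_l) a_l^* w^*\bigr)$, which is not positive and has no relation to the quantity controlled by (3); (b) the minimal-norm averaging requires $\Ad(w)$, $w \in \mathcal U(A)$, to act isometrically on the relevant $\rL^2$-space, which for a non-tracial weight forces $A$ to lie in the centralizer of $\varphi$ — but you fixed $\varphi$ with $\varphi \circ \rE_B = \varphi$, and one cannot in general arrange $\varphi|_A$ tracial, $\varphi \circ \rE_A = \varphi$ and $\varphi \circ \rE_B = \varphi$ simultaneously. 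What you have written is Popa's classical argument, and it proves the theorem only under the additional hypothesis that $B$ is semifinite, i.e., precisely the previously known cases (\cite{Po01, Po03} and the extensions \cite{HV12, Ue12} that the paper explicitly distinguishes from Theorem \ref{theorem-intertwining}). Removing that hypothesis — finding a substitute for the trace and for the $\mathcal U(A)$-invariant pairing on the basic construction — is the actual content of \cite[Section 4]{HI15} and is not a routine modification of your argument.

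Two further, smaller defects. In $(1) \Rightarrow (2)$, the claim that over $z_A(e)$ the unit of $A$ is subequivalent to finitely many copies of $e$ is false in a general finite von Neumann algebra (take $A = \bigoplus_{k \geq 1} \mathbf M_k(\C)$ and $e$ a direct sum of rank-one projections: then $z_A(e) = 1$ but no finite number of copies of $e$ dominates $1$); one must first pass to a nonzero central subprojection $z' \leq z_A(e)$ on which the central trace of $e$ is bounded below, define $\pi(a) = [\theta(u_i^* a u_j)]_{i,j}$ with finitely many partial isometries $u_i$ satisfying $u_i^* u_i \leq e$, $\sum_i u_i u_i^* = z'$, and verify that $\pi$ is multiplicative and unital on all of $A$ (it factors through $Az'$) with image a corner with expectation — fixable, but not what you wrote. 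Finally, even granting a trace in $(3) \Rightarrow (1)$, the extraction of $(e, f, v, \theta)$ from a finite-trace projection $p \in A' \cap 1_A \langle M, e_B \rangle 1_A$ \emph{together with} the certification that $\theta(eAe) \subset fBf$ is with expectation is asserted rather than proved; as you yourself note, the ``with expectation'' requirement on the target inclusion is one of the genuinely delicate new features of this theorem, and saying that it is ``read off from the interaction of $\Tr$ with $p$'' is not an argument.
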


Moreover, \cite[Theorem 4.3]{HI15} asserts that when $B \subset 1_B M 1_B$ is a {\em semifinite} von Neumann subalgebra endowed with any fixed faithful normal semifinite trace $\Tr$, then $A \preceq_M B$ if and only if there exist a projection $e \in A$, a $\Tr$-finite projection $f \in B$, a nonzero partial isometry $v \in eMf$ and a unital normal $\ast$-homomorphism $\theta : eAe \to fBf$ such that $av = v \theta(a)$ for all $a \in eAe$. Hence, in that case, the notation $A \preceq_M B$ is consistent with \cite[Proposition 3.1]{Ue12}. In particular, the projection $q \in \mathbf M_n (B)$ in Theorem \ref{theorem-intertwining} (2) is chosen to be finite under the trace $\Tr \otimes \tr_n$, when $B$ is semifinite with any fixed faithful normal semifinite trace $\Tr$. We refer to \cite[Section 4]{HI15} for further details.

\begin{rem}\label{remark-intertwining}
Keep the notation of Theorem \ref{theorem-intertwining}. 
\begin{enumerate}
\item Proposition \ref{prop-with-expectation} gives the following useful additional facts to Theorem \ref{theorem-intertwining}: The inclusions $eAe \, vv^* \subset vv^* M vv^*$ and $\theta(eAe) \, v^*v \subset v^*v M v^*v$ in (2) are also with expectation. Likewise, the inclusions $A  ww^* \subset ww^* M ww^*$ and $\pi(A)  w^*w \subset w^*w \mathbf M_n(M) w^*w$ in (3) are also with expectation.

\item Assume that there exist $k \geq 1$ and a nonzero partial isometry $u \in \mathbf M_{1, k}(M)$ such that $uu^* \in A' \cap 1_A M 1_A$ and $u^* A u \preceq_{\mathbf M_k(M)} \mathbf M_k(B)$. Then $A \preceq_M B$ holds. Indeed, there exist $n \geq 1$, a projection $q \in \mathbf M_{n}(\mathbf M_k(M))$, a nonzero partial isometry $w \in \mathbf M_{1, n}(u^* u \mathbf M_k(M))q$ and a unital normal $\ast$-homomorphism $\pi : u^* A u \to q \mathbf M_n(\mathbf M_k(B)) q$ such that the unital inclusion $\pi(u^* A u) \subset q\mathbf M_n(\mathbf M_k(B))q$ is with expectation and $y w = w \pi(y)$ for all $y \in u^* A u$. Define the unital normal $\ast$-homomorphism $\iota : A \to u^* Au : a \mapsto u^* a u$. Then a simple computation shows that $a \, uw = uw \, (\pi \circ \iota)(a)$ for all $a \in A$, where $uw \in \mathbf M_{1, nk}(1_AM)q$ and $uw \neq 0$, $\pi \circ \iota : A \to q\mathbf M_{nk}(B) q$ is a unital normal $\ast$-homomorphism and the unital inclusion $(\pi \circ \iota)(A) \subset q\mathbf M_{nk}(B) q$ is with expectation. Therefore, we obtain $A \preceq_M B$.
\end{enumerate}
\end{rem}

We are also going to use the following useful technical lemma. This is a generalization of \cite[Remark 3.8]{Va07}.

\begin{lem}\label{lemma-intertwining}
Keep the notation of Theorem \ref{theorem-intertwining}. Let $B \subset P \subset 1_P M 1_P$ be any intermediate von Neumann subalgebra with expectation. Assume that $A \preceq_M P$ and $A \npreceq_M B$.

Then there exist $k \geq 1$, a projection $q \in \mathbf M_k(P)$, a nonzero partial isometry $w \in \mathbf M_{1, k}(1_A M)q$ and a unital normal $\ast$-homomorphism $\pi : A \to q\mathbf M_k(P)q$ such that the unital inclusion $\pi(A) \subset q\mathbf M_k(P)q$ is with expectation, $\pi(A) \npreceq_{\mathbf M_k(P)} \mathbf M_k(B)$ and $a w = w \pi(a)$ for all $a \in A$.
\end{lem}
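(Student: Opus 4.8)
The plan is to reduce the statement to a single transitivity principle for Popa's intertwining and then to the composition of intertwiners supplied by Theorem \ref{theorem-intertwining}. Since $A\preceq_M P$, condition (2) of Theorem \ref{theorem-intertwining} together with Remark \ref{remark-intertwining}(1) already furnishes $k\geq 1$, a projection $q\in\mathbf M_k(P)$, a nonzero partial isometry $w\in\mathbf M_{1,k}(1_AM)q$ and a unital normal $\ast$-homomorphism $\pi:A\to q\mathbf M_k(P)q$ with $\pi(A)\subset q\mathbf M_k(P)q$ with expectation and $aw=w\pi(a)$ for all $a\in A$. Thus the only additional assertion to prove is $\pi(A)\npreceq_{\mathbf M_k(P)}\mathbf M_k(B)$, and I would prove it by contraposition: the heart of the matter is the transitivity claim that, for such an intertwiner, $\pi(A)\preceq_{\mathbf M_k(P)}\mathbf M_k(B)$ forces $A\preceq_M B$, contradicting the hypothesis $A\npreceq_M B$.

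For the transitivity claim, first record the elementary consequences of $aw=w\pi(a)$: taking adjoints gives $w^*a=\pi(a)w^*$, whence $ww^*\in A'\cap 1_AM1_A$ and $w^*w$ is a projection in $\mathcal C:=\pi(A)'\cap q\mathbf M_k(M)q$. Because $P\subset M$ is with expectation, so is $\mathbf M_k(P)\subset\mathbf M_k(M)$, and since the data witnessing condition (1) lives inside $\mathbf M_k(P)$, the relation $\pi(A)\preceq_{\mathbf M_k(P)}\mathbf M_k(B)$ upgrades to $\pi(A)\preceq_{\mathbf M_k(M)}\mathbf M_k(B)$. Applying condition (2) of Theorem \ref{theorem-intertwining} and Remark \ref{remark-intertwining}(1) to the latter yields $m\geq1$, a projection $q'\in\mathbf M_m(\mathbf M_k(B))=\mathbf M_{km}(B)$, a nonzero partial isometry $v'\in\mathbf M_{1,m}(q\mathbf M_k(M))q'$ with $v'v'^*\in\mathcal C$, and a unital normal $\ast$-homomorphism $\pi':\pi(A)\to q'\mathbf M_{km}(B)q'$ with image with expectation and $xv'=v'\pi'(x)$ for all $x\in\pi(A)$. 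Setting $V:=wv'\in\mathbf M_{1,km}(1_AM)q'$ and $\Psi:=\pi'\circ\pi:A\to q'\mathbf M_{km}(B)q'$, a direct computation gives $aV=V\Psi(a)$ for every $a\in A$, with $\Psi$ unital and $\Psi(A)\subset q'\mathbf M_{km}(B)q'$ with expectation. Provided $V\neq 0$, this is precisely condition (2) of Theorem \ref{theorem-intertwining} for $A\preceq_M B$, the desired contradiction. (This is the mechanism behind Remark \ref{remark-intertwining}(2), whose own proof secures the analogous nonvanishing by placing the left support of the second intertwiner under $u^*u$.)

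The one genuine difficulty is the nonvanishing of the composite $V=wv'$. Since $w=wq$ and $v'=qv'$, one checks that $V=0$ if and only if the two projections $w^*w$ and $v'v'^*$ of $\mathcal C$ are orthogonal. To force them to overlap I would invoke a maximality argument in the spirit of \cite[Remark 3.8]{Va07}: choose, among all intertwiners realizing $A\preceq_M P$, one for which the central support $z_{\mathcal C}(w^*w)$ of $w^*w$ in $\mathcal C$ is maximal. If $\pi(A)\preceq_{\mathbf M_k(P)}\mathbf M_k(B)$ held yet the intertwinable support $v'v'^*$ were orthogonal to $w^*w$, then the copy of $B\subset P$ produced by $v'$ would allow one to enlarge the original intertwiner of $A$ into $P$, contradicting maximality; hence $z_{\mathcal C}(w^*w)$ and $z_{\mathcal C}(v'v'^*)$ must overlap. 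Finally, replacing $v'$ by $gv'$ for a suitable unitary $g\in\mathcal U(\mathcal C)$ — which is again an intertwiner since $g\in\pi(A)'$ and merely rotates the left support to $gv'v'^*g^*$ — arranges $v'v'^*$ non-orthogonal to $w^*w$, so that $V\neq 0$. This support-compatibility step is exactly where the argument must do real work, all the remaining steps being the formal composition of intertwiners guaranteed by Theorem \ref{theorem-intertwining} and Remark \ref{remark-intertwining}.
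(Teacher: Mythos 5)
Your first step and the overall composition mechanism (compose the intertwiner $w$ for $A\preceq_M P$ with an intertwiner $v'$ for $\pi(A)\preceq \mathbf M_k(B)$ to contradict $A\npreceq_M B$) coincide with the paper's proof, and you correctly isolate the crux: the nonvanishing of the composite $wv'$. However, your resolution of that crux has a genuine gap. First, the maximality you invoke is not well defined: the ``central support $z_{\mathcal C}(w^*w)$'' lives in $\mathcal C=\pi(A)'\cap q\mathbf M_k(M)q$, an algebra that changes with the intertwiner $(k,q,w,\pi)$, so there is no partial order on the class of all intertwiners in which this quantity can be maximized, and no Zorn-type argument is supplied. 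Second, and more seriously, the claimed contradiction --- that orthogonality of $v'v'^*$ and $w^*w$ would let you ``enlarge the original intertwiner of $A$ into $P$'' --- is unsubstantiated and does not seem repairable: $v'$ intertwines only $\pi(A)$, not $A$, and the only way to transport information about $\pi(A)$ back to $A$ is through $w$; when $wv'=0$ this transport is exactly what vanishes, so no new intertwiner of $A$ with left support orthogonal to $ww^*$ can be manufactured from $v'$. The unitary-rotation step then presupposes that the central supports of $w^*w$ and $v'v'^*$ in $\mathcal C$ overlap, which is precisely the unproved claim. Indeed, for a \emph{general} intertwiner $(k,q,w,\pi)$ it can happen that $\pi(A)\preceq_{\mathbf M_k(P)}\mathbf M_k(B)$ is witnessed entirely on a part of $q$ invisible to $w$; the lemma is an existence statement, and the whole point is to \emph{choose} the intertwiner correctly.

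The paper's choice is the support-projection normalization of \cite[Remark 3.8]{Va07}, which you cite ``in spirit'' but never implement: let $q_0$ be the support projection of $\rE_{q\mathbf M_k(P)q}(w^*w)$; then $\rE_{q\mathbf M_k(P)q}((q-q_0)w^*w(q-q_0))=0$ forces $w=wq_0$, so one may replace $q$ by $q_0$ and $\pi$ by $\pi(\cdot)q_0$ (the ``with expectation'' property survives by Proposition \ref{prop-with-expectation}). After this normalization the nonvanishing is automatic, with no maximality and no rotation: if $u\in\mathbf M_{1,n}(q\mathbf M_k(P))r$ is a nonzero intertwiner of $\pi(A)$ into matrices over $B$ and $wu=0$, then
$$\rE_{\mathbf M_k(P)}(w^*w)\,u=\rE_{\mathbf M_n(\mathbf M_k(P))}(w^*w\,u)=0,$$
and since $q$ is the support of $\rE_{q\mathbf M_k(P)q}(w^*w)$ and $qu=u$, this gives $u=0$, a contradiction. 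Note that this argument requires the entries of $u$ to lie in $P$ (so that the bimodule property of $\rE_{\mathbf M_k(P)}$ applies); your preliminary ``upgrade'' of $\pi(A)\preceq_{\mathbf M_k(P)}\mathbf M_k(B)$ to an intertwining inside $\mathbf M_k(M)$, which produces $v'$ with entries only in $M$, discards exactly the feature that makes this expectation trick available, so you should apply Theorem \ref{theorem-intertwining} inside $\mathbf M_k(P)$, not $\mathbf M_k(M)$.
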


\begin{proof}
Since $A \preceq_M P$, there exist $k \geq 1$, a projection $q \in \mathbf M_k(P)$, a nonzero partial isometry $w \in \mathbf M_{1, k}(1_A M)q$ and a unital normal $\ast$-homomorphism $\pi : A \to q\mathbf M_k(P)q$ such that the unital inclusion $\pi(A) \subset q\mathbf M_k(P)q$ is with expectation and $a w = w \pi(a)$ for all $a \in A$. We have $w^*w \in \pi(A)' \cap q \mathbf M_k(M) q$. Following  \cite[Remark 3.8]{Va07}, denote by $q_0$ the support projection (belonging to $q\mathbf M_k(P)q$)  of the element $\rE_{q\mathbf M_k(P)q}(w^*w)$ and observe that $q_0 \in \pi(A)' \cap q\mathbf M_k(P)q$. Observe that $\rE_{q\mathbf M_k(P)q}((q-q_0)w^*w(q-q_0)) = 0$, and hence $w(q-q_0) = 0$, that is, $w = wq_0$. Thanks to Proposition \ref{prop-with-expectation}, replacing $q$ and $\pi$ with $q_0$ and $\pi(\cdot)q_0$, respectively, we may assume without loss of generality that $q$ is equal to the support projection of the element $\rE_{q\mathbf M_k(P)q}(w^*w)$.

We claim that we have $\pi(A) \npreceq_{\mathbf M_k(P)} \mathbf M_k(B)$. Indeed, otherwise there exist $n \geq 1$, a projection $r \in \mathbf M_{n}(\mathbf M_k(B))$, a nonzero partial isometry $u \in \mathbf M_{1, n}(q \mathbf M_k(P))r$ and a unital normal $\ast$-homomorphism $\theta : \pi(A) \to r\mathbf M_n(\mathbf M_k(B))r$ such that the unital inclusion $(\theta \circ \pi)(A) \subset r\mathbf M_n(\mathbf M_k(B))r$ is with expectation and $b u = u \theta(b)$ for all $b \in \pi(A)$. We moreover have $a \, w u = w u \, (\theta \circ \pi)(a)$ for all $a \in A$. Observe that $w u \neq 0$. Indeed, otherwise we have $w u = 0$ and hence 
$$
\rE_{\mathbf M_k(P)}(w^*w) u = \rE_{\mathbf M_n(\mathbf M_k(P))}(w^*w \, u) = 0.
$$
Since $q$ is equal to the support projection of the element $\rE_{q\mathbf M_k(P)q}(w^*w)$ and since $u \in \mathbf M_{1, n}(q \mathbf M_k(P))r$, this implies that $q u = 0$ and hence $u = 0$, a contradiction. Therefore, we have $w u \neq 0$ and hence $A \preceq_M B$, a contradiction. Consequently, we obtain $\pi(A) \npreceq_{\mathbf M_k(P)} \mathbf M_k(B)$.
\end{proof}

We point out that when $P \subset 1_P M 1_P$ is a {\em semifinite} von Neumann subalgebra endowed with a faithful normal semifinite trace $\Tr$, we may choose the nonzero projection $q \in \mathbf M_k(P)$ appearing in Lemma \ref{lemma-intertwining} to be of finite trace with respect to the faithful normal trace $\Tr \otimes \tr_k$.

\subsection*{Amalgamated free product von Neumann algebras}

Let $I$ be any nonempty set and $(B \subset M_i)_{i \in I}$ any family of inclusions of $\sigma$-finite von Neumann algebras with faithful normal conditional expectations $\rE_i : M_i \to B$. The {\em amalgamated free product} $(M, \rE) = \ast_{B, i \in I} (M_i, \rE_i)$ is the unique pair of von Neumann algebra $M$ generated by $(M_i)_{i \in I}$ and faithful normal conditional expectation $\rE : M \to B$ such that $(M_i)_{i \in I}$ is {\em freely independent} with respect to $\rE$:  
$$\rE(x_1 \cdots x_n) = 0 \; \text{ whenever } \; x_j \in M_{i_j}^\circ, \; i_1, \dots, i_n \in I \; \text{ and } \; i_1 \neq \cdots \neq  i_{n}.$$
Here and in what follows, we denote by $M_i^\circ := \ker(\rE_i)$. We call the resulting $M$ the {\em amalgamated free product von Neumann algebra} of $(M_i,E_i)_{i\in I}$ over $B$. We refer to the product $x_1 \cdots x_n$ where $x_j \in M_{i_j}^\circ$, $i_1, \dots, i_n \in I$ and $i_1 \neq \cdots \neq i_{n}$ as a {\em reduced word} in $M_{i_1}^\circ \cdots M_{i_n}^\circ$ of {\em length} $n \geq 1$. The linear span of $B$ and of all the reduced words in $M_{i_1}^\circ \cdots M_{i_n}^\circ$ where $n \geq 1$, $i_1, \dots, i_n \in I$ and $i_1 \neq \cdots \neq i_{n}$ forms a unital $\sigma$-strongly dense $\ast$-subalgebra of $M$.

When $B = \C 1$, $\rE_i = \varphi_i(\cdot) 1$ for all $i \in I$ and $\rE = \varphi(\cdot) 1$, we will simply  write $(M, \varphi) = \ast_{i \in I} (M_i, \varphi_i)$ and call the resulting $M$ the {\em free product von Neumann algebra} of $(M_i,\varphi_i)_{i\in I}$. 

When $B$ is a semifinite von Neumann algebra with faithful normal semifinite trace $\Tr$ and the weight $\Tr \circ \rE_i$ is tracial on $M_i$ for every $i \in I$, the weight $\Tr \circ \rE$ is  tracial on $M$ (see \cite[Proposition 3.1]{Po90} for the finite case and \cite[Theorem 2.6]{Ue98a} for the general case). In particular, $M$ is a semifinite von Neumann algebra. In that case, we will refer to $(M, \rE) = \ast_{B, i \in I} (M_i, \rE_i)$ as a {\em semifinite} amalgamated free product.

Let $\varphi \in B_\ast$ be any faithful state. Then for all $t \in \R$, we have $\sigma_t^{\varphi \circ \rE} = \ast_{i \in I} \sigma_t^{\varphi \circ \rE_i}$ (see \cite[Theorem 2.6]{Ue98a}). By \cite[Theorem IX.4.2]{Ta03}, for every $i \in I$, there exists a unique $\varphi\circ \rE$-preserving conditional expectation $\rE_{M_i} : M \to M_i$. Moreover, we have $\rE_{M_i}(x_1 \cdots x_n) = 0$ for all the reduced words $x_1 \cdots x_n$ that contain at least one letter from $M_j^\circ$ for some $j \in I \setminus \{i\}$ (see e.g.\ \cite[Lemma 2.1]{Ue10}). We will denote by $M \ominus M_i := \ker (\rE_{M_i})$. For more on (amalgamated) free product von Neumann algebras, we refer the reader to \cite{BHR12, Po90, Ue98a, Ue10, Ue12, Vo85, VDN92}.

The next lemma is a variant of \cite[Lemma 2.6]{HU15}. 

\begin{lem}\label{lemma-control-sequence}
For each $i \in \{ 1, 2 \}$, let $B \subset M_i$ be any inclusion of $\sigma$-finite von Neumann algebras with faithful normal conditional expectation $\rE_i : M_i \to B$. Denote by $(M, \rE) = (M_1, \rE_1) \ast_B (M_2, \rE_2)$ the corresponding amalgamated free product. 

Let $\psi \in M_\ast$ be any faithful state such that $\psi = \psi \circ \rE_{M_1}$. Let $(u_j)_{j \in J}$ be any net in $\Ball((M_1)^\psi)$ such that $\lim_j \rE_1(b^* u_j a) = 0$ $\sigma$-strongly for all $a, b \in M_1$. Then for all $x, y \in M$, we have $\lim_j \rE_{M_2}(y^* u_j x) = 0$ $\sigma$-strongly.
\end{lem}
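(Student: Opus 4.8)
The plan is to reduce the conclusion to the hypothesis on $\rE_1(b^* u_j a)$ by a reduced-word computation, after first choosing a convenient state with which to measure the $\sigma$-strong topology. The point to notice at the outset is that $\rE_{M_2}$ need \emph{not} be $\psi$-preserving: $\psi$ is only assumed to be $\rE_{M_1}$-invariant, and in general $\psi \circ \rE_{M_2} \neq \psi$. So I would fix an auxiliary faithful state $\chi := \varphi \circ \rE$ for some faithful $\varphi \in B_\ast$; then, as recalled in Section \ref{preliminaries}, $\rE_{M_2} : M \to M_2$ is $\chi$-preserving, hence $\|\rE_{M_2}(z)\|_\chi \leq \|z\|_\chi$ for all $z \in M$. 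Since the net $(\rE_{M_2}(y^* u_j x))_j$ is uniformly bounded in operator norm (by $\|x\|_\infty \|y\|_\infty$, as $\|u_j\|_\infty \leq 1$), its $\sigma$-strong convergence to $0$ is equivalent to $\lim_j \|\rE_{M_2}(y^* u_j x)\|_\chi = 0$, and this is what I would prove.

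Next I would reduce to the case where $x$ and $y$ lie in the $\sigma$-strongly dense $\ast$-subalgebra $\mathcal M_0$ spanned by $B$ and the reduced words. Given $\varepsilon > 0$, Kaplansky's density theorem provides $x', y' \in \mathcal M_0$ with $\|x'\|_\infty \leq \|x\|_\infty$, $\|y'\|_\infty \leq \|y\|_\infty$, $\|x - x'\|_\chi < \varepsilon$ and $\|(y - y')^*\|_\chi < \varepsilon$. Using $\chi$-contractivity of $\rE_{M_2}$ and $\|u_j\|_\infty \leq 1$, the $x$-error is controlled by left multiplication, $\|\rE_{M_2}(y^* u_j (x - x'))\|_\chi \leq \|y^* u_j (x - x')\xi_\chi\| \leq \|y\|_\infty\, \varepsilon$, while the $y$-error is controlled by the right-multiplication (i.e.\ $J^M$) trick $\|\rE_{M_2}((y - y')^* u_j x')\|_\chi \leq \|(y-y')^*(u_j x')\xi_\chi\| = \|J^M (u_j x')^* J^M (y-y')^*\xi_\chi\| \leq \|x'\|_\infty\, \varepsilon \leq \|x\|_\infty\,\varepsilon$. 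Since $\mathcal M_0$ is finitely spanned by $B$ and reduced words, by linearity it then suffices to establish the conclusion when $x$ and $y$ are themselves reduced words (or elements of $B$); letting $\varepsilon \to 0$ afterwards gives the general case.

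The core step is thus the computation for a single reduced word (or $B$-element) $x$ and $y$. Write $x = a\, \check{x}$ and $y = b\, \check{y}$, where $a, b \in M_1$ is the innermost letter of $x$, resp.\ $y$, when it lies in $M_1^\circ$ (and $a = 1$, resp.\ $b = 1$, when the innermost letter lies in $M_2^\circ$, and $a = x \in B$, resp.\ $b = y \in B$, in the trivial length case), and $\check{x}, \check{y}$ are reduced words beginning with an $M_2^\circ$-letter (or trivial). Then $y^* u_j x = \check{y}^* (b^* u_j a)\, \check{x}$ with $b^* u_j a \in M_1$, and splitting $b^* u_j a = \rE_1(b^* u_j a) + (b^* u_j a)^\circ$ gives
$$
y^* u_j x = \check{y}^*\, \rE_1(b^* u_j a)\, \check{x} \;+\; \check{y}^* (b^* u_j a)^\circ \check{x}.
$$
The second summand is a reduced word containing the $M_1^\circ$-letter $(b^* u_j a)^\circ$ (no further reduction occurs precisely because $\check{x}$, $\check{y}$ begin with $M_2^\circ$-letters), hence it lies in $\ker \rE_{M_2}$. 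Therefore $\rE_{M_2}(y^* u_j x) = \rE_{M_2}(\check{y}^*\, \rE_1(b^* u_j a)\, \check{x})$, so that $\|\rE_{M_2}(y^* u_j x)\|_\chi \leq \|\check{y}^*\, \rE_1(b^* u_j a)\, \check{x}\|_\chi$, and this tends to $0$ because $\rE_1(b^* u_j a) \to 0$ $\sigma$-strongly for the \emph{fixed} elements $a, b \in M_1$ by hypothesis, the net is uniformly bounded, and multiplication by the fixed elements $\check{y}^*$ and $\check{x}$ is $\sigma$-strongly continuous on bounded sets.

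The main obstacle is the interplay between the two states: the expectation $\rE_{M_2}$ is only contractive for the canonical state $\chi$, whereas the hypothesis is phrased intrinsically, which forces one to phrase the whole argument $\sigma$-strongly and to handle the $y$-variable approximation via right multiplication rather than a naive contraction estimate. Once this is in place, the genuinely algebraic heart — the identity above, isolating the $B$-component $\rE_1(b^* u_j a)$ and killing the $M_1^\circ$-component under $\rE_{M_2}$ — is essentially bookkeeping of reduced words. I note that the argument does not in fact use the assumption $u_j \in (M_1)^\psi$; only $u_j \in \Ball(M_1)$ together with the vanishing of $\rE_1(b^* u_j a)$ is needed.
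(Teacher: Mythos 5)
Your reduction to reduced words and the core free-independence computation (splitting $b^* u_j a = \rE_1(b^* u_j a) + (b^* u_j a)^\circ$ and observing that $\rE_{M_2}$ kills the resulting reduced word containing an $M_1^\circ$-letter) are correct and coincide with the first part of the paper's proof. The gap is in your approximation in the $y$-variable. The identity
$$\|(y-y')^*(u_j x')\xi_\chi\| = \|J^M (u_j x')^* J^M (y-y')^*\xi_\chi\|$$
is false for a non-tracial state $\chi$: since $J^M w^* J^M \xi_\chi = \Delta_\chi^{1/2} w \xi_\chi$, your identity amounts to $(u_j x')\xi_\chi = \Delta_\chi^{1/2}(u_j x')\xi_\chi$, which holds when $\chi$ is a trace or when $u_j x'$ lies in the centralizer of $\chi$, but not in general. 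The correct commutant formula is $z\,w\,\xi_\chi = J^M \sigma^\chi_{{\rm i}/2}(w)^* J^M z\xi_\chi$, valid only for $\sigma^\chi$-analytic $w$, and it produces the bound $\|\sigma^\chi_{{\rm i}/2}(u_j x')\|_\infty \, \|(y - y')^*\xi_\chi\|$. But your $u_j$ are not $\sigma^\chi$-analytic --- they lie in the centralizer of $\psi$, not of $\chi = \varphi\circ\rE$ --- and even if each were, nothing bounds $\|\sigma^\chi_{{\rm i}/2}(u_j x')\|_\infty$ uniformly in $j$. So the $y$-error term is not controlled, and the proof collapses at exactly this point.

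This also explains why your closing remark --- that the hypothesis $u_j \in (M_1)^\psi$ is not needed --- is mistaken: it is precisely what rescues this step. In the paper's proof, the right-multiplication trick is run against the vector $\xi_\psi$ rather than $\xi_\chi$; since $\sigma^\psi_t(u_j) = u_j$, one has $\sigma^\psi_{{\rm i}/2}(u_j x) = u_j \sigma^\psi_{{\rm i}/2}(x)$ for $\sigma^\psi$-analytic $x$, whence the $j$-uniform bound $\|\sigma^\psi_{{\rm i}/2}(x)\|_\infty \|y^*\xi_\psi - y_k^*\xi_\psi\|$. The price of using $\psi$ is that the expectation $\rE_{M_2}$ is only compatible with $\varphi$, so one must juggle the two states: the paper's argument therefore has three steps (words against arbitrary vectors of $\rL^2(M)$, using $e_{M_2}$ and $\varphi$; arbitrary $y$ and $\psi$-analytic $x$ against $\xi_\psi$, using the centralizer hypothesis; then arbitrary $x, y$ against $\xi_\varphi$ by approximating $x\xi_\varphi$ by $x_k\xi_\psi$ with $x_k$ $\psi$-analytic). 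Your one-step Kaplansky approximation works verbatim only in the tracial case; to repair it you would have to reinstate this two-state bookkeeping, i.e.\ essentially reproduce the paper's three-step argument.
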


\begin{proof}
We first prove the $\sigma$-strong convergence when $x, y \in M_1 \cup M_1 M_2^\circ \cdots M_2^\circ M_1$ are {\em words} of the form $x = a x' c$ or $x = a$ and $y = b y' d$ or $y = b$ with $a, b, c, d \in M_1$ and $x', y' \in M_2^\circ \cdots M_2^\circ$. By free independence, for all $j \in J$, we have
$$\rE_{M_2}(y^* u_j x) = 
\begin{cases} 
\rE_{M_2}(d^*y'^* \, \rE_1(b^* u_j a)  \,  x'c) & (x = a x' c, y = b y' d), \\ 
\rE_{M_2}(d^*y'^*)\rE_1(b^* u_j a) & (x = a, y = b y' d), \\
\rE_1(b^* u_j a)\rE_{M_2}(x'c) & (x = a x' c, y = b), \\
\rE_1(b^* u_j a) & (x = a, y = b).
\end{cases}$$
Since $\lim_j \rE_1(b^* u_j a) = 0$ $\sigma$-strongly, we have $\lim_j \rE_{M_2}(y^* u_j x) = 0$ $\sigma$-strongly. 

We combine now the same pattern of approximation as in the proof of \cite[Lemma 2.6]{HU15} with a trick using standard forms as in the proof of \cite[Theorem 3.1]{HU15}. Namely, we will work with the standard form $(M, \rL^2(M), J^M, \mathfrak P^M)$ and denote by $e_{M_2}$ the Jones projection determined by $\rE_{M_2}$. Choose a faithful state $\varphi \in M_\ast$ with $\varphi = \varphi\circ \rE$. Denote by $\xi_\varphi, \xi_\psi \in \mathfrak P^M$ the unique representing vectors of $\varphi, \psi$ respectively. Observe that $\varphi = \varphi \circ \rE_{M_2}$ and hence $e_{M_2}x\xi_\varphi = \rE_{M_2}(x)\xi_\varphi$ holds for every $x \in M$ (though we do {\em not} have $e_{M_2}x\xi_\psi = \rE_{M_2}(x)\xi_\psi$). The rest of the proof is divided into three steps.    

(First step) We first prove that $\lim_j \| e_{M_2}y^* u_j \xi\|_{\rL^2(M)} = 0$ for any $\xi \in \rL^2(M)$ and any {\em word} $y \in M_1 \cup M_1 M_2^\circ \cdots M_2^\circ M_1$. Indeed, we may choose a sequence $(x_k)_k$, where each $x_k$ is a finite linear combination of {\em words} in $M_1 \cup M_1 M_2^\circ \cdots M_2^\circ M_1$, and such that $\|\xi-x_k\xi_\varphi \|_{\rL^2(M)} \rightarrow 0$ as $k\to\infty$, since those linear combinations of words form a $\sigma$-strongly dense $*$-subalgebra of $M$. Then, for all $j \in J$ and $k \in \N$, we have
\begin{align*}
\|e_{M_2}y^* u_j \xi\|_{\rL^2(M)} 
&\leq \|e_{M_2}y^* u_j x_k\xi_\varphi\|_{\rL^2(M)} + \|e_{M_2}y^* u_j (\xi - x_k\xi_\varphi)\|_{\rL^2(M)} \\
&\leq \|\rE_{M_2}(y^* u_j x_k)\xi_\varphi\|_{\rL^2(M)} + \|y\|_\infty \|\xi - x_k\xi_\varphi\|_{\rL^2(M)}.
\end{align*}
The first part of the proof implies that $\limsup_j \|e_{M_2}y^* u_j \xi\|_{\rL^2(M)}  \leq \|y\|_\infty \|\xi - x_k\xi_\varphi\|_{\rL^2(M)}$ for all $k \in \N$ and hence $\lim_j \|e_{M_2}y^* u_j \xi\|_{\rL^2(M)} = 0$.

(Second step) We next prove that $\lim_j \|e_{M_2}y^* u_j x\xi_\psi\|_{\rL^2(M)} = 0$ for any \emph{analytic element} $x \in M$ with respect to the modular automorphism group $\sigma^\psi$ and any element $y \in M$. Indeed, we may choose a sequence $(y_k)_k$, where each $y_k$ is a finite linear combination of {\em words} in $M_1 \cup M_1 M_2^\circ \cdots M_2^\circ M_1$, and such that $\lim_{k\to\infty}\|y^*\xi_\psi - y_k^*\xi_\psi\|_{\rL^2(M)} = 0$. Then, for all $j \in J$ and  $k \in \N$, we have
\begin{align*}
\|e_{M_2}y^* u_j x\xi_\psi\|_{\rL^2(M)} &\leq \|e_{M_2}y^*_k u_j x\xi_\psi\|_{\rL^2(M)} + \|e_{M_2}(y^*-y^*_k) u_j x\xi_\psi\|_{\rL^2(M)} \\
&\leq \|e_{M_2}y^*_k u_j x\xi_\psi\|_{\rL^2(M)} + \|(y^* - y_k^*) u_j x\xi_\psi \|_{\rL^2(M)} \\
&= \|e_{M_2}y^*_k u_j x\xi_\psi\|_{\rL^2(M)} +  \|J^M \sigma_{{\rm i}/2}^\psi(x)^* u_j^* J^M(y^*\xi_\psi - y_k^*\xi_\psi) \|_{\rL^2(M)} \\
&\leq \|e_{M_2}y^*_k u_j x\xi_\psi\|_{\rL^2(M)} +  \|\sigma_{{\rm i}/2}^\psi(x)\|_\infty \|y^*\xi_\psi - y_k^*\xi_\psi \|_{\rL^2(M)},
\end{align*}
since $u_j \in (M_1)^\psi$. The first step implies that $\limsup_j \|e_{M_2}y^* u_j x\xi_\psi\|_{\rL^2(M)}  \leq \|\sigma_{{\rm i}/2}^\psi(x)\|_\infty \|y^*\xi_\psi - y_k^*\xi_\psi \|_{\rL^2(M)}$ for all $k \in \N$ and hence $\lim_j \|e_{M_2}y^* u_j x\xi_\psi\|_{\rL^2(M)} = 0$.

(Final step) We finally prove that $\lim_j \|\rE_{M_2}(y^* u_j x)\xi_\varphi\|_{\rL^2(M)} = 0$ for any elements $x, y \in M$. Indeed, we may choose a sequence $(x_k)_k$ in $M$ of analytic elements with respect to the modular automorphism group $\sigma^\psi$ such that $\lim_{k\to\infty}\|x\xi_\varphi - x_k\xi_\psi\|_{\rL^2(M)} = 0$. Then, for all $j\in J$ and $k \in \N$, we have
\begin{align*}
\|\rE_{M_2}(y^* u_j x)\xi_\varphi\|_{\rL^2(M)} 
&= \|e_{M_2}y^* u_j x\xi_\varphi\|_{\rL^2(M)} \\
&\leq \|e_{M_2}y^* u_j x_k\xi_\psi\|_{\rL^2(M)} + \|e_{M_2}y^* u_j (x\xi_\varphi - x_k\xi_\psi)\|_{\rL^2(M)} \\
&\leq \|e_{M_2}y^* u_j x_k\xi_\psi\|_{\rL^2(M)} + \|y\|_\infty \|x\xi_\varphi - x_k\xi_\psi\|_{\rL^2(M)}.
\end{align*}
The second step implies that $\limsup_j \|\rE_{M_2}(y^* u_j x)\xi_\varphi\|_{\rL^2(M)} \leq \|y\|_\infty \|x\xi_\varphi - x_k\xi_\psi\|_{\rL^2(M)}$ for all $k \in \N$ and hence $\lim_j \|\rE_{M_2}(y^* u_j x)\xi_\varphi\|_{\rL^2(M)} = 0$. Hence we are done.
\end{proof}

The next lemma will be used in the proof of the Main Theorem. This can be regarded as a variant of \cite[Corollary 4.3]{Po83}, \cite[Lemma 5.1]{Ge95} (in the tracial case), \cite[Proposition 6]{Ue98b} (in the non-tracial case) and also part of \cite[Theorem 1.1]{IPP05} (in the tracial amalgamated free product case). 

\begin{lem}\label{lemma-control} 
For each $i \in \{1, 2\}$, let $(M_i, \varphi_i)$ be any $\sigma$-finite von Neumann algebra endowed with any faithful normal state. Denote by $(M,\varphi) = (M_1,\varphi_1)\ast (M_2,\varphi_2)$ the corresponding free product.  

Let $1_Q \in M$ be any nonzero projection and $Q \subset 1_QM_11_Q$ be any diffuse von Neumann subalgebra with expectation. Let $n \geq 1$. If a partial isometry $v \in \mathbf{M}_{1,n}(M)$ with $vv^* \in Q$ or $vv^* \in Q' \cap 1_QM1_Q$ satisfies that $v^* Q v \subset \mathbf M_n(M_2)$, then $1_Q \, v = 0$. In particular, when $vv^* \in Q$, we have $v = 0$.  
\end{lem}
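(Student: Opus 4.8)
The plan is to derive a contradiction from the assumption $v\neq 0$, exploiting the diffuseness of $Q$ together with the asymptotic freeness estimate of Lemma \ref{lemma-control-sequence}; conceptually this is just the statement that the diffuse subalgebra $Q\subseteq M_{1}$ cannot be intertwined into the complementary free factor $M_{2}$. Set $p:=vv^{*}$. Since $v$ is a partial isometry and $p\leq 1_{Q}$ in both cases, we have $v=vv^{*}v=pv$ and $1_{Q}p=p$, whence $1_{Q}v=pv=v$; thus proving $v=0$ is the same as proving $1_{Q}v=0$, and the special case $vv^{*}\in Q$ is subsumed. I would therefore assume $p\neq 0$ and seek a contradiction.

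First I would manufacture a weakly null net of unitaries sitting inside a suitable modular centralizer. Exploiting that $Q\subseteq 1_{Q}M_{1}1_{Q}$ is diffuse and with expectation, I would choose a faithful state $\psi\in M_{\ast}$ of the form $\psi=\psi_{1}\circ\rE_{M_{1}}$ (so that $\psi=\psi\circ\rE_{M_{1}}$, as demanded by Lemma \ref{lemma-control-sequence}) together with a diffuse abelian von Neumann subalgebra $A$ with expectation such that $A\subseteq (M_{1})^{\psi}$ and such that the unit of $A$ is the projection I want: $1_{A}=1_{Q}$ in the case $vv^{*}\in Q'\cap 1_{Q}M1_{Q}$, and $A\subseteq pQp$ with $1_{A}=p$ in the case $vv^{*}\in Q$ (here $pQp$ is again diffuse, being a corner of the diffuse algebra $Q$, and it is with expectation in $pM_{1}p$ by restricting $\rE_{Q}$ using $Q$-bimodularity). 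The gain in passing to an abelian $A$ is that every faithful state restricts to a trace on $A$, so that global $\sigma^{\psi}$-invariance of $A$ automatically places $A$ inside the centralizer $(M_{1})^{\psi}$. Taking Haar unitaries $(u_{j})_{j}$ of $A$, I obtain a net in $\Ball((M_{1})^{\psi})$ with $u_{j}\to 0$ $\sigma$-weakly in $M_{1}$, hence $\varphi_{1}(b^{*}u_{j}a)\to 0$ for all $a,b\in M_{1}$.

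With this net in hand I would apply Lemma \ref{lemma-control-sequence} (to the free product $M=M_{1}\ast M_{2}$, for which $B=\C$ and $\rE_{1}=\varphi_{1}(\cdot)1$) to get $\rE_{M_{2}}(v_{k}^{*}u_{j}v_{l})\to 0$ $\sigma$-strongly for all entries $v_{1},\dots,v_{n}$ of the row $v$. Since $u_{j}\in A\subseteq Q$, the hypothesis $v^{*}Qv\subseteq\mathbf M_{n}(M_{2})$ forces $v^{*}u_{j}v\in\mathbf M_{n}(M_{2})$, so that $v^{*}u_{j}v=[\rE_{M_{2}}(v_{k}^{*}u_{j}v_{l})]_{k,l}\to 0$ $\sigma$-strongly, whence $pu_{j}p=v(v^{*}u_{j}v)v^{*}\to 0$ $\sigma$-strongly and in particular $\|pu_{j}p\|_{\psi}\to 0$. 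I then read off the contradiction from the placement of the unit of $A$: if $vv^{*}\in Q'\cap 1_{Q}M1_{Q}$, then $p$ commutes with $u_{j}\in Q$ and $u_{j}^{*}u_{j}=1_{Q}\geq p$, so $pu_{j}p=u_{j}p$ and $\|pu_{j}p\|_{\psi}^{2}=\psi(pu_{j}^{*}u_{j}p)=\psi(p)$; while if $vv^{*}\in Q$, then $u_{j}\in pQp$ gives $pu_{j}p=u_{j}$ and $\|pu_{j}p\|_{\psi}^{2}=\psi(u_{j}^{*}u_{j})=\psi(p)$. In either case $\|pu_{j}p\|_{\psi}^{2}=\psi(p)$ is a strictly positive constant, by faithfulness of $\psi$ and $p\neq 0$, contradicting $\|pu_{j}p\|_{\psi}\to 0$. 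Hence $p=0$ and $v=0$.

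The computation in the last paragraph is routine; the genuine technical point, and the step on which I would spend the most care, is the construction in the second paragraph, namely producing the diffuse abelian $A$ with expectation that simultaneously lies in a centralizer $(M_{1})^{\psi}$ with $\psi=\psi\circ\rE_{M_{1}}$ and carries the prescribed unit ($1_{Q}$, respectively $p$). This rests on the standard fact that a diffuse $\sigma$-finite von Neumann algebra with separable predual admits a faithful normal state with diffuse centralizer, so that a maximal abelian subalgebra of that (finite) centralizer furnishes $A$ with a trace-preserving expectation; I would apply it to $Q$, respectively to $pQp$ after first arranging $p$ to lie in the centralizer, and I would verify throughout that the state on $Q$ can be taken as the restriction of a state $\psi_{1}$ on $M_{1}$ leaving $Q$ globally $\sigma^{\psi_{1}}$-invariant, which is precisely why I build $\psi$ as $\psi_{1}\circ\rE_{M_{1}}$.
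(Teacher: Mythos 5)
Your proof is correct and takes essentially the same route as the paper's: there too one constructs (via \cite[Lemma 2.1]{HU15}, which packages the Haagerup--St{\o}rmer fact you invoke) a faithful state $\psi = \psi \circ \rE_{M_1}$ such that a diffuse subalgebra of $Q$ (there the centralizer $Q^{\psi_Q}$, here your MASA $A$, with the unit adjusted to handle the case $vv^* \in Q$) lies in $(M_1)^\psi$, then feeds a weakly null net of its unitaries into Lemma \ref{lemma-control-sequence} and reads off the contradiction from a norm identity ($\|v^* u_k v\|_{\varphi \otimes \tr_n} = \|v^* v\|_{\varphi \otimes \tr_n}$ in the paper, $\|p u_j p\|_\psi^2 = \psi(p)$ in yours). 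The only caveat is that you should quote the diffuse-centralizer fact in its $\sigma$-finite form (as \cite[Lemma 2.1]{HU15} does), since the lemma assumes only $\sigma$-finiteness of the $M_i$, not separability of the predual.
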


\begin{proof}
When $vv^* \in Q$, replacing $Q$ with $vv^* Q vv^*$ we may and will assume that $vv^* = 1_Q$. Hence, since $vv^* = 1_Q \in Q$ or $vv^* \in Q' \cap 1_Q M 1_Q$, we may think of the map $Q \to \mathbf M_n(M_2) : x \mapsto v^* x v$ as a normal (non-unital) $\ast$-homomorphism.   

Since $Q \subset 1_Q M_1 1_Q$ is with expectation, we may choose a faithful state $\psi \in M_\ast$ such that $\psi = \psi \circ \rE_{M_1}$, $1_Q \in (M_1)^\psi$, $Q \subset 1_Q M 1_Q$ is globally invariant under the modular automorphism group $\sigma^{\psi_Q}$ and $Q^{\psi_{Q}} \subset 1_Q (M_1)^\psi 1_Q$ is diffuse where $\psi_Q := \frac{\psi(1_Q \, \cdot \, 1_Q)}{\psi(1_Q)}$. See e.g.\ the proof of \cite[Lemma 2.1]{HU15}. 

Write $v = [v_1 \cdots v_n] \in \mathbf M_{1, n}(M)$ and denote by $\tr_n$ the canonical normalized trace on $\mathbf M_n(\C)$. Since $Q^{\psi_{Q}}$ is diffuse, we can choose a sequence of unitaries $(u_k)_k$ in $\mathcal U(Q^{\psi_Q})$ with $\lim_{k\to\infty} u_k = 0$ $\sigma$-weakly. By Lemma \ref{lemma-control-sequence}, we have 
$$\lim_{k\to\infty} \|\rE_{\mathbf M_n(M_2)}(v^* u_k v)\|_{\varphi \otimes \tr_n}^2 = \lim_{k\to\infty} \sum_{i, j = 1}^n \|\rE_{M_2}(v^*_i u_k v_j)\|_\varphi^2 = 0.$$
Since $v^* u_k v \in \mathcal U(v^* Q v) \subset \mathbf M_n(M_2)$, we have
$$\|v^* 1_Q v\|_{\varphi \otimes \tr_n} = \| v^* u_k v \, v^* 1_Q v \|_{\varphi \otimes \tr_n} 
= \| v^* u_k v \|_{\varphi \otimes \tr_n} = \|\rE_{\mathbf M_n(M_2)}(v^* u_k v)\|_{\varphi \otimes \tr_n} \rightarrow 0$$ as $k\to\infty$,
implying that $1_Q \, v = 0$.
\end{proof}

We point out that the above way of proof is applicable even to amalgamated free products over non-trivial subalgebras under suitable assumptions. Similarly, the same can be said about \cite[Proposition 2.7]{HU15}.

\subsection*{Ultraproduct von Neumann algebras}

Let $M$ be any $\sigma$-finite von Neumann algebra and $\omega \in \beta(\N) \setminus \N$ any nonprincipal ultrafilter. Define
\begin{align*}
\mathcal I_\omega(M) &= \left\{ (x_n)_n \in \ell^\infty(\N, M) : x_n \to 0\ \ast\text{-strongly as } n \to \omega \right\} \\
\mathcal M^\omega(M) &= \left \{ (x_n)_n \in \ell^\infty(\N, M) :  (x_n)_n \, \mathcal I_\omega(M) \subset \mathcal I_\omega(M) \text{ and } \mathcal I_\omega(M) \, (x_n)_n \subset \mathcal I_\omega(M)\right\}.
\end{align*}
The {\em multiplier algebra} $\mathcal M^\omega(M)$ is a C$^*$-algebra and $\mathcal I_\omega(M) \subset \mathcal M^\omega(M)$ is a norm closed two-sided ideal. Following \cite[\S 5.1]{Oc85}, we define the {\em ultraproduct von Neumann algebra} $M^\omega$ by $M^\omega := \mathcal M^\omega(M) / \mathcal I_\omega(M)$, which is indeed known to be a von Neumann algebra. We denote the image of $(x_n)_n \in \mathcal M^\omega(M)$ by $(x_n)^\omega \in M^\omega$. 

For every $x \in M$, the constant sequence $(x)_n$ lies in the multiplier algebra $\mathcal M^\omega(M)$. We will then identify $M$ with $(M + \mathcal I_\omega(M))/ \mathcal I_\omega(M)$ and regard $M \subset M^\omega$ as a von Neumann subalgebra. The map $\rE_\omega : M^\omega \to M : (x_n)^\omega \mapsto \sigma \text{-weak} \lim_{n \to \omega} x_n$ is a faithful normal conditional expectation. For every faithful state $\varphi \in M_\ast$, the formula $\varphi^\omega := \varphi \circ \rE_\omega$ defines a faithful normal state on $M^\omega$. Observe that $\varphi^\omega((x_n)^\omega) = \lim_{n \to \omega} \varphi(x_n)$ for all $(x_n)^\omega \in M^\omega$. 

Following \cite[\S2]{Co74}, we define
$$\mathcal M_\omega(M) := \left\{ (x_n)_n \in \ell^\infty(\N, M) : \lim_{n \to \omega} \|x_n \varphi - \varphi x_n\| = 0, \forall \varphi \in M_\ast \right\}.$$
We have $\mathcal I_\omega (M) \subset \mathcal M_\omega(M) \subset \mathcal M^\omega(M)$. The {\em asymptotic centralizer} $M_\omega$ is defined by $M_\omega := \mathcal M_\omega(M)/\mathcal I_\omega(M)$. We have $M_\omega \subset M^\omega$. Moreover, by \cite[Proposition 2.8]{Co74} (see also \cite[Proposition 4.35]{AH12}), we have $M_\omega = M' \cap (M^\omega)^{\varphi^\omega}$ for every faithful state $\varphi \in M_\ast$.

Let $Q \subset M$ be any von Neumann subalgebra with faithful normal conditional expectation $\rE_Q : M \to Q$. Choose a faithful state $\varphi \in M_\ast$ in such a way that $\varphi = \varphi \circ \rE_Q$. We have $\ell^\infty(\N, Q) \subset \ell^\infty(\N, M)$, $\mathcal I_\omega(Q) \subset \mathcal I_\omega(M)$ and $\mathcal M^\omega(Q) \subset \mathcal M^\omega(M)$. We will then identify $Q^\omega = \mathcal M^\omega(Q) / \mathcal I_\omega(Q)$ with $(\mathcal M^\omega(Q) + \mathcal I_\omega(M)) / \mathcal I_\omega(M)$ and be able to regard $Q^\omega \subset M^\omega$ as a von Neumann subalgebra. Observe that the norm $\|\cdot\|_{(\varphi |_Q)^\omega}$ on $Q^\omega$ is the restriction of the norm $\|\cdot\|_{\varphi^\omega}$ to $Q^\omega$. Observe moreover that $(\rE_Q(x_n))_n \in \mathcal I_\omega(Q)$ for all $(x_n)_n \in \mathcal I_\omega(M)$ and $(\rE_Q(x_n))_n \in \mathcal M^\omega(Q)$ for all $(x_n)_n \in \mathcal M^\omega(M)$. Therefore, the mapping $\rE_{Q^\omega} : M^\omega \to Q^\omega : (x_n)^\omega \mapsto (\rE_Q(x_n))^\omega$ is a well-defined conditional expectation satisfying $\varphi^\omega \circ \rE_{Q^\omega} = \varphi^\omega$. Hence, $\rE_{Q^\omega} : M^\omega \to Q^\omega$ is a faithful normal conditional expectation. For more on ultraproduct von Neumann algebras, we refer the reader to \cite{AH12, Oc85}.

We give a useful result showing how Popa's intertwining techniques behave with respect to taking ultraproduct von Neumann algebras.

\begin{prop}\label{proposition-ultraproducts}
Let $M$ be any $\sigma$-finite von Neumann algebra, $1_A$ and $1_B$ any nonzero projections in $M$, $A\subset 1_AM1_A$ and $B\subset 1_BM1_B$ any von Neumann subalgebras with faithful normal conditional expectations $\rE_A : 1_A M 1_A \to A$ and $\rE_B : 1_B M 1_B \to B$ respectively. Assume moreover that $A$ is a finite von Neumann algebra. 

Let $\omega \in \beta(\N) \setminus \N$ be any nonprincipal ultrafilter. Define $A^\omega \subset (1_A M 1_A)^\omega = 1_A M^\omega 1_A$ and $B^\omega \subset (1_B M 1_B)^\omega = 1_B M^\omega 1_B$. If $A^\omega \preceq_{M^\omega} B^\omega$, then $A \preceq_M B$.
\end{prop}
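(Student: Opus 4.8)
The plan is to combine the matrix form of Popa's intertwining (Theorem \ref{theorem-intertwining}(2)) with Popa's minimal-vector criterion, the latter transported across the canonical embedding of basic constructions $\langle M, e_B\rangle \hookrightarrow \langle M^\omega, e_{B^\omega}\rangle$. First I would reduce the ``source'' from $A^\omega$ to the fixed finite subalgebra $A$. Since $A$ is finite, so is $A^\omega$ (it carries the trace $\tau^\omega$ coming from a faithful normal trace $\tau$ on $A$), hence Theorem \ref{theorem-intertwining} applies to $A^\omega \subset 1_A M^\omega 1_A$. Assuming $A^\omega \preceq_{M^\omega} B^\omega$, item (2) provides $n \geq 1$, a projection $q \in \mathbf M_n(B^\omega)$, a nonzero partial isometry $V \in \mathbf M_{1,n}(1_A M^\omega) q$ and a unital normal $\ast$-homomorphism $\pi : A^\omega \to q\mathbf M_n(B^\omega)q$ with expectation such that $a V = V\pi(a)$ for all $a \in A^\omega$. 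Restricting $\pi$ to $A \subset A^\omega$ (and using that $A \subset A^\omega$ carries a $\tau^\omega$-preserving normal conditional expectation, so that, after composing, $\pi(A) \subset q\mathbf M_n(B^\omega)q$ is again with expectation) yields the same data witnessing $A \preceq_{M^\omega} B^\omega$. Thus it suffices to prove $A \preceq_{M^\omega} B^\omega \Rightarrow A \preceq_M B$ for the \emph{fixed} finite algebra $A$; I will prove its contrapositive $A \npreceq_M B \Rightarrow A \npreceq_{M^\omega} B^\omega$.

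Here I would invoke Popa's minimal-vector reformulation of intertwining, available precisely because $A$ is finite. After reducing to $1_A = 1_B = 1$ (the general case being handled by the usual corner manipulations), fix a faithful state $\varphi \in M_\ast$ with $\varphi = \varphi \circ \rE_B$, let $e_B$ be the Jones projection determined by $\rE_B$, so that $e_B x \xi_\varphi = \rE_B(x)\xi_\varphi$, and form the basic construction $\langle M, e_B\rangle$ with its canonical semifinite trace $\widehat{\Tr}$ satisfying $\widehat{\Tr}(x e_B y) = \varphi(xy)$ and $\widehat{\Tr}(e_B) = 1$. The positive elements $w e_B w^*$, $w \in \mathcal U(A)$, all satisfy $\|w e_B w^*\|_{2, \widehat{\Tr}} = 1$, so the $\widehat{\Tr}$-$\rL^2$-weakly closed convex hull $\mathcal K := \overline{\conv}^{\,\mathrm w}\{ w e_B w^* : w \in \mathcal U(A)\}$ is bounded and admits a unique element $d$ of minimal $\|\cdot\|_{2,\widehat{\Tr}}$-norm; Popa's criterion, in the form valid for finite $A$ with expectation, reads $A \preceq_M B \iff d \neq 0 \iff 0 \notin \mathcal K$. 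This is the step where finiteness of $A$ is essential: it furnishes the trace $\tau$ and makes the averaging argument behind the criterion work.

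The crux is then a compatibility statement for the two basic constructions. Because $\varphi^\omega|_M = \varphi$ and $\rE_{B^\omega}|_M = \rE_B$, the isometric inclusion $\rL^2(M) \subset \rL^2(M^\omega)$ intertwines $e_B$ with $e_{B^\omega}$, and the assignment $x e_B y \mapsto x e_{B^\omega} y$ ($x, y \in M$) is trace-preserving, since the trace of $\langle M^\omega, e_{B^\omega}\rangle$ sends $x e_{B^\omega} y$ to $\varphi^\omega(xy) = \varphi(xy)$. Hence it extends to a trace-preserving normal embedding $\iota : \langle M, e_B\rangle \hookrightarrow \langle M^\omega, e_{B^\omega}\rangle$ with $\iota(e_B) = e_{B^\omega}$ and $\iota|_M = \id$, which is in particular $\|\cdot\|_2$-isometric and hence weak-to-weak continuous on the associated $\rL^2$-spaces. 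Consequently $\iota(w e_B w^*) = w e_{B^\omega} w^*$ for every $w \in \mathcal U(A)$, and by weak continuity $\iota(\mathcal K) \subseteq \mathcal K^\omega$, where $\mathcal K^\omega := \overline{\conv}^{\,\mathrm w}\{ w e_{B^\omega} w^* : w \in \mathcal U(A)\}$ is the corresponding set computed for $A \subset M^\omega$ and $B^\omega$. If $A \npreceq_M B$, then $0 \in \mathcal K$, whence $0 = \iota(0) \in \iota(\mathcal K) \subseteq \mathcal K^\omega$, so the minimal vector of $\mathcal K^\omega$ vanishes and $A \npreceq_{M^\omega} B^\omega$, as desired.

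The main obstacle I expect is not the reduction but the two analytic inputs feeding the last two paragraphs: establishing the minimal-vector criterion and the functorial trace-preserving embedding of basic constructions in the present non-tracial, merely-with-expectation generality (the framework of \cite{HI15} should supply exactly this). A more hands-on alternative would avoid basic constructions and instead feed the net $(w_i) \subset \mathcal U(A)$ from Theorem \ref{theorem-intertwining}(3) directly into $M^\omega$, hoping that the \emph{same} net witnesses $A \npreceq_{M^\omega} B^\omega$; there the difficulty becomes the interchange of the net-limit over $i$ with the ultrafilter limit $\lim_{n\to\omega}$ appearing in $\|\rE_{B^\omega}(Y^* w_i X)\|_{\varphi^\omega}^2 = \lim_{n\to\omega}\|\rE_B(y_n^* w_i x_n)\|_\varphi^2$ for $X = (x_n)^\omega$, $Y = (y_n)^\omega$. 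These limits do not obviously commute for genuinely varying sequences, and it is precisely the finiteness of $A$ that must be used to control this. The minimal-vector argument is attractive because it packages that use of finiteness geometrically and bypasses the interchange altogether.
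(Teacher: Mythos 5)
Your first reduction (restricting $\pi$ from $A^\omega$ to $A$ to get $A \preceq_{M^\omega} B^\omega$) is essentially fine, although the expectation onto $\pi(A)$ should be produced from the finiteness of $\pi(A^\omega)$ — any subalgebra of a $\sigma$-finite finite von Neumann algebra is the range of a trace-preserving expectation — rather than by pushing forward the expectation $A^\omega \to A$, since $\pi$ need not be injective. The genuine gap is in your second and third paragraphs: they require a faithful normal semifinite trace $\widehat{\Tr}$ on the basic construction $\langle M, B\rangle$ with $\widehat{\Tr}(x e_B y) = \varphi(xy)$, together with Popa's minimal-vector criterion over $\rL^2(\langle M, B\rangle, \widehat{\Tr})$. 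This machinery exists only when the inclusion $B \subset M$ is tracial (or semifinite with trace-preserving $\rE_B$, as in the paper's Appendix). In the generality of the proposition — and precisely in the type ${\rm III}$ setting this paper is built for — $\langle M, B\rangle = (J^M B J^M)' \cap \mathbf B(\rL^2(M))$ is itself typically of type ${\rm III}$ (already for $B = M$ one gets $\langle M, B \rangle = M$), so it carries \emph{no} semifinite trace at all; \cite{HI15} does not and cannot supply one, and their Theorem 4.3 is proved by entirely different means. The natural substitute, the dual weight $\varphi \circ \widehat{\rE}_B$, is not a trace, and the convex-hull/minimal-vector argument then collapses: conjugation by $u \in \mathcal U(A)$ is not weight-preserving unless $A \subset M^\varphi$, and one cannot in general choose $\varphi$ satisfying both $\varphi = \varphi \circ \rE_B$ and $A \subset M^\varphi$. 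This incompatibility is exactly the type ${\rm III}$ obstruction that forces the paper to use \cite{HI15} in the first place, so the central analytic step of your proof fails in the stated generality.

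Ironically, the paper's actual proof is the ``hands-on alternative'' you dismissed, made rigorous by a diagonal argument (following \cite[Lemma 9.5]{Io12}). Instead of transporting the \emph{negative} condition (a net of unitaries killing $\rE_B(b^* u a)$), one transports the \emph{positive}, quantitative negation of criterion (3): from $A^\omega \preceq_{M^\omega} B^\omega$ one gets $\delta > 0$ and a finite set $\mathcal F \subset 1_A M^\omega 1_B$, with each $a \in \mathcal F$ written as $a = (a_n)^\omega$, such that $\sum_{a, b \in \mathcal F} \|\rE_{B^\omega}(b^* u a)\|_{\varphi^\omega}^2 > \delta$ for all $u \in \mathcal U(A^\omega)$. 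One then shows some fixed index $n$ works with the representatives $a_n, b_n \in M$: if not, for every $n$ there is a witnessing $u_n \in \mathcal U(A^\omega)$; since $A$ is finite, $u_n$ lifts to a sequence of unitaries $(u^{(n)}_m)_m$ in $\mathcal U(A)$, one picks $m_n$ so that $v_n := u^{(n)}_{m_n}$ still violates the bound at level $n$, and reassembles $v := (v_n)^\omega \in \mathcal U(A^\omega)$, contradicting the uniform bound in the ultrapower. Restricting the surviving inequality to $u \in \mathcal U(A)$ gives $A \preceq_M B$ by criterion (3). This is exactly where finiteness of $A$ enters (lifting unitaries of $A^\omega$ to sequences of unitaries of $A$); it needs no basic construction, no trace, and no interchange of limits — the interchange you rightly worried about is sidestepped by negating the quantitative bound levelwise and making the diagonal choice of $m_n$.
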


\begin{proof}
The proof uses an idea of \cite[Lemma 9.5]{Io12}. Choose a faithful state $\varphi \in M_\ast$ in such a way that $1_B \in M^\varphi$ and $\varphi_B \circ \rE_B = \varphi_B$ with $\varphi_B := \frac{\varphi(1_B\,\cdot\,1_B)}{\varphi(1_B)}$. Assume that $A^\omega \preceq_{M^\omega} B^\omega$. By Theorem \ref{theorem-intertwining}, there exist $\delta > 0$ and a finite subset $\mathcal F \subset  1_AM^\omega1_B$ such that 
\begin{equation}\label{equation-ultraproduct1}
\sum_{a, b \in \mathcal F} \|\rE_{B^\omega}(b^* u a)\|_{\varphi^\omega}^2 > \delta, \forall u \in \mathcal U(A^\omega).
\end{equation}
For each $a \in \mathcal F$, write $a = (a_n)^\omega$ with a fixed sequence $(a_n)_n \in 1_A\mathcal M^\omega( M)1_B$.

We next claim that there exists $n \in \N$ such that 
\begin{equation}\label{equation-ultraproduct2}
\sum_{a, b \in \mathcal F} \|\rE_{B^\omega}(b_n^* u a_n)\|_{\varphi^\omega}^2 \geq \delta, \forall u \in \mathcal U(A^\omega).
\end{equation}
Assume by contradiction that this is not the case. Then for every $n \in \N$, there exists $u_n \in \mathcal U(A^\omega)$ such that 
$$\sum_{a, b \in \mathcal F} \|\rE_{B^\omega}(b_n^* u_n a_n)\|_{\varphi^\omega}^2 < \delta.$$ 
Since $A$ is a finite von Neumann algebra, we may write $u_n = (u^{(n)}_m)^\omega$ with a sequence $(u^{(n)}_m)_m \in \ell^\infty(\N, A)$ such that $u_m^n \in \mathcal U(A)$ for all $m \in \N$. Then we have
$$\lim_{m \to \omega} \sum_{a, b \in \mathcal F} \|\rE_{B}(b_n^* u^{(n)}_m a_n)\|_\varphi^2 < \delta$$ for all $n \in \N$. 
Thus, we may choose $m_n \in \N$ large enough so that $v_n := u^{(n)}_{m_n}\in \mathcal U(A)$ satisfies 
$$\sum_{a, b \in \mathcal F} \|\rE_{B}(b_n^* v_n a_n)\|_\varphi^2 < \delta.$$
Since $A$ is finite, we may define $v := (v_n)^\omega \in \mathcal U(A^\omega)$ and we obtain
\begin{equation}\label{equation-ultraproduct3}
\sum_{a, b \in \mathcal F} \|\rE_{B^\omega}(b^* v a)\|_{\varphi^\omega}^2 = \lim_{n \to \omega} \sum_{a, b \in \mathcal F} \|\rE_{B}(b_n^* v_n a_n)\|_\varphi^2 \leq \delta.
\end{equation}
Equations \eqref{equation-ultraproduct1} and \eqref{equation-ultraproduct3} give a contradiction. This shows that Equation \eqref{equation-ultraproduct2} holds. Therefore, up to replacing the finite subset $\mathcal F \subset 1_A M^\omega 1_B$ with $\{a_n : a \in \mathcal F\} \subset 1_A M 1_B$, we may assume that $\mathcal F \subset 1_A M 1_B$ in Equation \eqref{equation-ultraproduct1}. In particular, we obtain
\begin{equation*}
\sum_{a, b \in \mathcal F} \|\rE_{B}(b^* u a)\|_{\varphi}^2  \geq  \delta, \forall u \in \mathcal U(A).
\end{equation*}
This finally implies that $A \preceq_M B$.
\end{proof}

\section{A characterization of von Neumann algebras with property Gamma}\label{characterization}

In this section, we generalize Popa's characterization of property Gamma for {\em tracial} von Neumann algebras (see \cite[Proposition 7]{Oz03} with $\mathcal N_0 = \mathcal M$) to {\em arbitrary} von Neumann algebras. This generalization is an unpublished result due to Houdayer--Raum, which they obtained through their recent work \cite{HR14}.

\begin{thm}\label{theorem-structure-gamma}
Let $M$ be any diffuse von Neumann algebra with separable predual and $\omega \in \beta(\N) \setminus \N$ any nonprincipal ultrafilter. The following conditions are equivalent.
\begin{itemize}
\item[(i)] The central sequence algebra $M' \cap M^\omega$ is diffuse. 
\item[(ii)] The asymptotic centralizer $M_\omega$ is diffuse.
\item[(iii)] There exists a faithful state $\psi \in M_\ast$ such that $M' \cap (M^\psi)^\omega$ is diffuse.
\item [(iv)] There exists a decreasing sequence $(A_n)_n$ of diffuse abelian von Neumann subalgebras of $M$ with expectation such that $M = \bigvee_{n \in \N} ((A_n)' \cap M)$.
\end{itemize}
\end{thm}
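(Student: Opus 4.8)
The plan is to single out condition (iv) as the concrete, spatial one and to route everything through it, proving that (i), (ii) and (iii) are each equivalent to (iv). The three ultrapower algebras that intervene are nested,
\[
M' \cap (M^\psi)^\omega \ \subseteq\ M_\omega = M' \cap (M^\omega)^{\varphi^\omega} \ \subseteq\ M' \cap M^\omega ,
\]
where $M_\omega$ and $M' \cap (M^\psi)^\omega$ are finite, since the restriction of a state to its centralizer is a trace (so $M^\psi$ is tracial and $(M^\psi)^\omega$ is a tracial ultrapower). I would prove $\text{(iv)} \Rightarrow \text{(i)},\text{(ii)},\text{(iii)}$ all at once by a single central-sequence construction, prove $\text{(iii)} \Rightarrow \text{(iv)}$ by reducing to Popa's tracial characterization of property Gamma inside the finite algebra $M^\psi$, and close the cycle by transferring diffuseness along the displayed chain.

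For $\text{(iv)} \Rightarrow \text{(i)}, \text{(ii)}, \text{(iii)}$, I would start from a decreasing sequence $(A_n)_n$ as in (iv). Fixing a faithful normal state $\omega_1$ on $A_1$ and putting $\psi := \omega_1 \circ \rE_{A_1}$, Takesaki's theorem \cite[Theorem IX.4.2]{Ta03} together with commutativity of $A_1$ gives $A_1 \subseteq M^\psi$, hence $A_n \subseteq A_1 \subseteq M^\psi$ for every $n$; in particular $\psi|_{M^\psi}$ is a trace and each $A_n$ is a diffuse abelian subalgebra of the finite algebra $M^\psi$. I would choose a Haar unitary $u_n \in A_n$ with respect to the normalized trace, so that $u_n^k \to 0$ weakly for $k \neq 0$. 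Because $(A_n)_n$ is decreasing, any $x \in A_m' \cap M$ commutes with $A_n$, hence with $u_n$, for all $n \geq m$; since $\bigcup_n (A_n' \cap M)$ is $\sigma$-strongly dense in $M$ by (iv), the element $u := (u_n)^\omega$ lies in $M' \cap (M^\psi)^\omega$. As $(\psi|_{M^\psi})^\omega(u^k) = \lim_{n \to \omega} (\psi|_{M^\psi})(u_n^k) = 0$ for $k \neq 0$, the unitary $u$ is Haar and generates a diffuse abelian subalgebra; this is (iii). Viewing $u$ inside $M_\omega = M' \cap (M^\omega)^{\psi^\omega}$ (using $(M^\psi)^\omega \subseteq (M^\omega)^{\psi^\omega}$) and inside $M' \cap M^\omega$ then yields (ii) and (i).

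For $\text{(iii)} \Rightarrow \text{(iv)}$, I would work inside the finite tracial von Neumann algebra $M^\psi$, where the hypothesis says that the tracial central sequence algebra $M' \cap (M^\psi)^\omega$ is diffuse. Using separability of $M_\ast$, I would fix an increasing sequence of finite sets whose union is $\|\cdot\|_\psi$-dense in $M$ and run Popa's extraction underlying his characterization of property Gamma (\cite[Proposition 7]{Oz03}): from the diffuse abelian subalgebras of $M' \cap (M^\psi)^\omega$ one produces, by a diagonal argument over $\omega$, a decreasing sequence $(A_n)_n$ of diffuse abelian subalgebras of $M^\psi$ such that each fixed element of $M$ is arbitrarily well approximated by elements commuting with $A_n$ for $n$ large, that is $\bigvee_n (A_n' \cap M) = M$. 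Since the $A_n$ sit inside the centralizer $M^\psi$, the $\psi$-preserving conditional expectation restricts to a faithful normal conditional expectation onto each $A_n$, so (iv) holds.

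The main obstacle is the transfer of diffuseness along the chain displayed above, i.e.\ the implications relating (i), (ii) and (iii) that close the cycle (for instance $\text{(i)} \Rightarrow \text{(ii)} \Rightarrow \text{(iii)}$). This is genuinely nontrivial: a diffuse subalgebra with expectation of a von Neumann algebra need not force the larger algebra to be diffuse, and conversely a diffuse algebra has many non-diffuse subalgebras, so neither inclusion can be exploited for free. The point is that these particular algebras are diffuse \emph{simultaneously}, and I would establish this through the modular theory of the ultraproduct. Concretely, $M_\omega$ is the centralizer of $\varphi^\omega$ restricted to $M' \cap M^\omega$, and I would control $\sigma^{\varphi^\omega}$ on $M' \cap M^\omega$ via the Ando--Haagerup analysis of ultraproduct modular automorphism groups (\cite{AH12}, \cite{Co74}), showing that a minimal projection of $M_\omega$ would already force one in $M' \cap M^\omega$ and vice versa, and that an asymptotically $\psi$-central sequence can be perturbed, modulo $\mathcal I_\omega(M)$, into $(M^\psi)^\omega$. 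This perturbation and centralizer step, carried out with Connes--Takesaki structure theory, is where the real work lies, and it is exactly where the separability hypothesis on $M_\ast$ enters.
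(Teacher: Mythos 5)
Your overall architecture (routing everything through (iv), doing (iii) $\Rightarrow$ (iv) by the Ozawa--Popa extraction, and using the chain $M' \cap (M^\psi)^\omega \subseteq M_\omega \subseteq M' \cap M^\omega$) is sound, and your (iii) $\Rightarrow$ (iv) is essentially the paper's argument. However, the step you present as automatic, (iv) $\Rightarrow$ (i), (ii), (iii), has a genuine gap at its last line. Producing the Haar unitary $u = (u_n)^\omega \in M' \cap (M^\psi)^\omega$ shows only that this algebra contains a unital diffuse abelian subalgebra; conditions (i)--(iii) assert that the relative commutants are themselves \emph{diffuse}, and containing a unital diffuse abelian subalgebra does not imply diffuseness: the algebra $N = \rL^\infty[0,1] \oplus \mathbf B(\rL^2[0,1])$ contains the unital diffuse abelian subalgebra $\{ f \oplus m_f : f \in \rL^\infty[0,1]\}$, where $m_f$ is multiplication by $f$, yet $N$ has minimal projections. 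Upgrading ``nontrivial'' to ``diffuse'' is exactly where the paper invests work: it first reduces to the case where $M$ is a factor via the central decomposition $M = Mz_0 \oplus \bigoplus_{k \geq 1} M z_k$ (a reduction entirely absent from your proposal, and necessary, since for non-factors a central sequence algebra can be nontrivial yet have minimal projections), and then runs the re-indexing argument of \cite[Corollary 3.8]{Co74} (equivalently cites \cite[Corollary 2.6]{HR14}): for a projection $f = (f_n)^\omega$ with $\psi^\omega(f) = \lambda \in (0,1)$, factoriality forces $f_n \to \lambda 1$ $\sigma$-weakly, and a diagonal choice of indices $k_n$ produces a subprojection $r = (f_n f_{k_n})^\omega \leq f$ of trace $\lambda^2$, so no nontrivial projection is minimal. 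Without this dichotomy your cycle never establishes that any of the algebras in (i)--(iii) is diffuse.

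The second gap concerns the two implications that carry the real content, (i) $\Rightarrow$ (ii) and (ii) $\Rightarrow$ (iii), which you defer to the final paragraph with a sketch that would not work as described. For (ii) $\Rightarrow$ (iii), your plan to ``perturb an asymptotically $\psi$-central sequence into $(M^\psi)^\omega$'' presupposes the state $\psi$, whereas constructing $\psi$ is the entire point of the implication: in the paper, $\psi = \Phi_0$ is manufactured as the limit of the states $\varphi_{0,n} = \sum_{j_0, \dots, j_n \in \{1, \perp\}} p_0^{j_0} \cdots p_n^{j_n} \, \varphi \, p_0^{j_0} \cdots p_n^{j_n}$ built from an inductively chosen commuting family of approximately central projections $(p_n)_n$, and one must prove both that $\Phi_0$ is faithful and that $p_n \in M^{\Phi_0}$ for all $n$. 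Nothing in your outline produces such a state, and Connes--Takesaki structure theory, which you invoke, plays no role in this construction. For (i) $\Rightarrow$ (ii), the statement you propose to prove (a minimal projection in $M_\omega$ forces one in $M' \cap M^\omega$) is not a general fact about states and their centralizers: diffuseness does not pass to subalgebras, and $M_\omega$ is the centralizer of $\varphi^\omega|_{M' \cap M^\omega}$, which a priori could be far smaller. That it is diffuse whenever $M' \cap M^\omega$ is diffuse is precisely \cite[Theorem 2.3]{HR14}, a genuine theorem which the paper cites rather than reproves; you should either cite it as well or reproduce its proof, but a one-line appeal to ``Ando--Haagerup analysis'' does neither. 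In short, your soft directions are fine modulo the diffuseness upgrade, but that upgrade together with (i) $\Rightarrow$ (ii) $\Rightarrow$ (iii) \emph{is} the theorem, and it is missing.
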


\begin{proof}
Let $z_k \in \mathcal Z(M)$ be a sequence of central projections such that $\sum_k z_k = 1$, $Mz_0$ has a diffuse center and $Mz_k$ is a diffuse factor for all $k \geq 1$. The equivalences (i) $\Leftrightarrow$ (ii) $\Leftrightarrow$ (iii) $\Leftrightarrow$ (iv) are all obvious for $Mz_0$, since all conditions actually hold true. Indeed, in order to obtain (iv), observe that it suffices to take $A_n = \mathcal Z(Mz_0)$ for every $n \in \N$. It remains to prove the equivalences for each $M z_k$ with $k \geq 1$. Therefore, in order to prove the result and without loss of generality, we may assume that $M$ is a diffuse factor.

(i) $\Rightarrow$ (ii) (c.f.\ \cite[Corollary 2.6]{HR14}.) Fix a faithful state $\varphi \in M_\ast$. By \cite[Proposition 2.8]{Co74} (see also \cite[Proposition 4.35]{AH12}), we have $M_\omega = M' \cap (M^\omega)^{\varphi^\omega}$. Then $M_\omega$ is diffuse by \cite[Theorem 2.3]{HR14} (see also \cite[Corollary 3.8]{Co74}).

(ii) $\Rightarrow$ (iii) Fix a faithful state $\varphi \in M_\ast$. Since $M' \cap (M^\omega)^{\varphi^\omega} = M_\omega$ is diffuse, we may choose a projection $e \in M_\omega$ such that $\varphi^\omega(e) = 2^{-1}$. Since $M$ is diffuse, we may write $e = (e_n)^\omega$ with a sequence of projections $(e_n)_n \in \mathcal M^\omega(M)$ such that $\varphi(e_n) = 2^{-1}$ for all $n \in \N$  (see \cite[Proposition 2.2]{HR14}). Observe that $\sigma\text{-weak} \lim_{n \to \omega} e_n = 2^{-1} 1_M$, since $M$ is a factor. Fix a countable $\|\cdot\|_\varphi$-dense subset $Y = \{y_n : n \in \N\} \subset M$.

Since $e \in M_\omega = M' \cap (M^\omega)^{\varphi^\omega}$, there exists $n \in \N$ large enough so that the projection $p_0 := e_n \in M$ satisfies $\varphi(p_0) = 2^{-1}$, $\|y_0 p_0 - p_0 y_0\|_\varphi \leq 2^{-1}$ and $\|\varphi  p_0 - p_0 \varphi\| \leq 2^{-1}$. Next, $ep_0 \in (M' \cap M^\omega)p_0 \subset (p_0 M p_0)^\omega$ is a projection satisfying $\varphi^\omega(ep_0) = \lim_{n \to \omega} \varphi(e_n p_0) = 2^{-2}$ because of $\sigma\text{-weak} \lim_{n \to \omega} e_n = 2^{-1} 1_M$. Since $p_0 M p_0$ is diffuse, we may write $ep_0 = (r_n)^\omega$ with a sequence of projections $(r_n)_n \in \mathcal M^\omega(p_0Mp_0)$ such that $\varphi(r_n) = 2^{-2}$ for all $n \in \N$. Likewise, we may write $ep_0^\perp = (s_n)^\omega$ for a sequence of projections $(s_n)_n \in \mathcal M^\omega(p_0^\perp M p_0^\perp)$ such that $\varphi(s_n) = 2^{-2}$ for all $n \in \N$. Observe that $e = ep_0 + ep_0^\perp = (r_n + s_n)^\omega \in M' \cap (M^\omega)^{\varphi^\omega}$. Then there exists $n \in \N$ large enough so that $p_1 := r_n + s_n$ satisfies $\varphi(p_1) = 2^{-1}$, $p_0 p_1 = p_1 p_0$, $\varphi(p_0 p_1) = 2^{-2}$, $\|y_j p_1 - p_1 y_j\| \leq 2^{-2}$ for all $0 \leq j \leq 1$ and $\| \varphi p_1 - p_1 \varphi\| \leq 2^{-2}$.

Repeating the above procedure, we construct by induction a sequence of projections $(p_n)_n$ in $M$ satisfying the following properties:
\begin{itemize}
\item[(P1)] $\varphi(p_n) = 2^{-1}$ for all $n \in \N$.
\item [(P2)] $p_j p_n = p_n p_j$ for all $j, n \in \N$.
\item [(P3)] $\varphi(p_{i_1} \cdots p_{i_r}) = 2^{-r}$ for all $r \geq 1$ and all $r$-tuples $(i_1, \dots, i_r)$ of pairwise distinct integers.
\item [(P4)] $\|y_j p_n - p_n y_j\|_\varphi \leq 2^{-(n + 1)}$ for all $0 \leq j \leq n$.
\item [(P5)] $\|\varphi p_n - p_n \varphi\| \leq 2^{-(n + 1)}$ for all $n \in \N$.
\end{itemize}
It follows that $(p_n)_n \in \mathcal M^\omega(M)$ and $p := (p_n)^\omega \in M' \cap (M^\omega)^{\varphi^\omega}$ satisfies $\varphi^\omega(p) = 2^{-1}$.

For each pair $0 \leq m \leq n$, put $\varphi_{m,n} := \sum_{j_m, \dots, j_n \in \{1, \perp\}} p_{m}^{j_m} \cdots p_{n}^{j_n} \, \varphi \, p_{m}^{j_m} \cdots p_{n}^{j_n} \in M_\ast$ and observe that $\varphi_{mn}$ is a faithful normal state. For any pair $0 \leq m \leq n$, using the triangle inequality with (P2) and (P5), we have
\begin{align*}
\|\varphi_{m,n} - \varphi_{m,n + 1}\| \leq  \left\|\varphi -  \sum_{j_{n + 1} \in \{1, \perp\}} p_{n + 1}^{j_{n + 1}} \, \varphi \, p_{n + 1}^{j_{n + 1}}\right\| 
 \leq 2 \, \|\varphi p_{n + 1} - p_{n + 1} \varphi\| 
 \leq 2^{-(n + 1)}.
\end{align*}
This implies that for each $m \in \N$, the sequence $(\varphi_{m,n})_{n}$ is Cauchy and hence convergent in $M_\ast$. Put $\Phi_{m} = \lim_{n \to \infty} \varphi_{m,n} \in M_\ast$ and observe that $\Phi_{m}$ is a normal state. We moreover have
\begin{align*}
\|\varphi - \Phi_{m}\| &\leq \|\varphi - \varphi_{m,n}\| + \|\varphi_{m,n} - \Phi_{m}\| \\
& \leq 2 \, \|\varphi p_{m} - p_{m} \varphi\| + \sum_{n \geq m} \|\varphi_{m,n} - \varphi_{m,n+1}\| \\
& \leq 2^{-m} + \sum_{n \geq m} 2^{-(n + 1)} 
= 2^{-(m - 1)}.
\end{align*}
This implies that $\lim_{m \to \infty} \Phi_{m} = \varphi$. Observe that $\Phi_m \, p_n = p_n \, \Phi_m$  for all $0 \leq m \leq n$.

We next claim that $\Phi_m$ is a {\em faithful} normal state for all $m \in \N$. Indeed, fix $m \in \N$ and let $x \in M^+$ such that $\Phi_m(x) = 0$. We prove by induction over $n \geq m$ that $\Phi_{n}(x) = 0$. By assumption, we have $\Phi_{m}(x) = 0$. Assume that $\Phi_{n}(x) = 0$ for some $n \geq m$. Observe that $0 = \Phi_{n}(x) = \Phi_{n + 1}(p_n x p_n + p_n^\perp x p_n^\perp)$. Denote by $q \in M$ the support of the normal state $\Phi_{n + 1}$. We have $qp_n x p_n q = 0 = qp_n^\perp x p_n^\perp q$. This implies that $x^{1/2} p_n q = 0 = x^{1/2} p_n^\perp q$ and hence $x^{1/2} q = 0$, that is, $qxq = 0$. Thus, $\Phi_{n + 1}(x) = 0$. Therefore, we have $\Phi_{n}(x) = 0$ for all $n \geq m$ and hence $\varphi(x) = \lim_{n\to \infty} \Phi_{n}(x) = 0$. Since $\varphi$ is faithful, we obtain $x = 0$. This shows that $\Phi_m \in M_\ast$ is faithful for every $m \in \N$. 

Letting $\psi := \Phi_0$, we have $p_n \in M^\psi$ for all $n \in \N$ and hence $p = (p_n)^\omega \in M' \cap (M^\psi)^\omega$. Since $\varphi^\omega(p) = 2^{-1}$, we have $p \neq 0, 1$. This implies that $M' \cap (M^\psi)^\omega$ is diffuse. Indeed, proceeding as in the proof of \cite[Corollary 3.8]{Co74}, let $f \in M' \cap (M^\psi)^\omega$ be any projection such that $\psi^\omega(f) = \lambda$ with $\lambda \neq 0, 1$. Write $f = (f_n)^\omega$ where $f_n \in M^\psi$ is a projection for every $n \in \N$. Observe that since $M$ is a factor, we have $\sigma\text{-weak} \lim_{n \to \omega} f_n = \lambda 1_M$. We can construct by induction an increasing sequence of integers $k_n \in \N$ satisfying the following properties:
\begin{itemize}
\item [(P1)] $|\psi(f_n f_{k_n}) - \lambda \psi(f_n)| \leq (n + 1)^{-1}$ for all $n \in \N$.
\item [(P2)] $\|f_n f_{k_n} - f_{k_n} f_n\|_\psi \leq (n + 1)^{-1}$ for all $n \in \N$.
\item [(P3)] $\|y_j f_{k_n} - f_{k_n} y_j\|_\psi \leq (n + 1)^{-1}$ for all $0 \leq j \leq n$.
\end{itemize}
It follows that $r := (f_n f_{k_n})^\omega \in M' \cap (M^\psi)^\omega$ is a projection satisfying $r \leq f$ and $\psi(r) = \lambda^2$. This shows that $f \in M' \cap (M^\psi)^\omega$ is not a minimal projection and hence $M' \cap (M^\psi)^\omega$ is diffuse.

(iii) $\Rightarrow$ (iv) The proof of this implication is entirely analogous to the one of \cite[Proposition 7]{Oz03} with $\mathcal N_0 = \mathcal M$ but we give the details for the sake of completeness. Fix a countable $\|\cdot\|_\psi$-dense subset $Y = \{y_n : n \in \N\} \subset M$. Since $M' \cap (M^\psi)^\omega$ is diffuse (note that $M^\psi$ is also diffuse), the proof of (ii) $\Rightarrow$ (iii) shows that we can construct by induction a sequence of projections $p_n \in M^\psi$ satisfying the following properties:
\begin{itemize}
\item[(P1)] $\psi(p_n) = 2^{-1}$ for all $n \in \N$.
\item [(P2)] $p_j p_n = p_n p_j$ for all $j, n \in \N$.
\item [(P3)] $\psi(p_{i_1} \cdots p_{i_r}) = 2^{-r}$ for all $r \geq 1$ and all $r$-tuples $(i_1, \dots, i_r)$ of pairwise distinct integers.
\item [(P4)] $\|y_j p_n - p_n y_j\|_\psi \leq 2^{-(n + 1)}$ for all $0 \leq j \leq n$.
\end{itemize}

For each $k \in \N$, define $D_k := \C p_k \oplus \C p_k^\perp$. For each pair $0 \leq m \leq n$, define $A_{m,n} := \bigvee_{m \leq k \leq n} D_k$ and $A_{m} := \bigvee_{m \leq k} D_k = \bigvee_{m \leq n} A_n$. Observe that $A_{m,n}, A_{m} \subset M^\psi$ for all $0 \leq m \leq n$. We also have that $(A_{m})_m$ is a decreasing sequence of diffuse abelian von Neumann subalgebras of $M^\psi$ by (P2) and (P3). Fix $j \in \N$ and let $n \geq m \geq j$. Whenever $C \subset M$ is a von Neumann subalgebra globally invariant under the modular automorphism group $\sigma^{\psi}$, denote by $\rE_C^{\psi} : M \to C$ the unique $\psi$-preserving conditional expectation. We have $A_{m,n+1} = A_{m,n} \vee D_{n + 1} \subset M^\psi$, $\rE_{(A_{m,n + 1})' \cap M}^{\psi} = \rE_{(A_{m,n})' \cap M}^{\psi} \circ \rE_{(D_{n + 1})' \cap M}^{\psi}$ (see e.g.\ \cite[Lemma 1.2.2]{Po83}) and
\begin{align*}
\left\|\rE_{(A_{m,n+1})' \cap M}^{\psi}(y_j) - \rE_{(A_{m,n})' \cap M}^{\psi}(y_j) \right\|_{\psi} &= \left \|\rE_{(A_{m,n})' \cap M}^{\psi} \left (\rE_{(D_{n + 1})' \cap M}^{\psi}(y_j) - y_j \right) \right \|_{\psi} \\
& \leq \left  \|\rE_{(D_{n + 1})' \cap M}^{\psi}(y_j) - y_j  \right \|_{\psi} \\
& \leq 2 \, \|p_{n + 1} y_j - y_j p_{n + 1}\|_{\psi} 
\leq 2^{-(n + 1)}.
\end{align*}
By \cite[Lemma 1.2 1$^\circ$]{Po81}, we have $\|y_j - \rE_{(A_{m})' \cap M}^{\psi}(y_j)\|_\psi = \lim_{n\to\infty} \|y_j - \rE_{(A_{m,n})' \cap M}^{\psi}(y_j)\|_\psi$ and hence
\begin{align*}
\left\|y_j - \rE_{(A_{m})' \cap M}^{\psi}(y_j) \right\|_\psi &= \lim_n \left\|y_j - \rE_{(A_{m,n})' \cap M}^{\psi}(y_j) \right\|_\psi \\
& \leq \left \| y_j - \rE_{(A_{m,n})' \cap M}^{\psi}(y_j) \right \|_\psi + \sum_{n \geq m} \left\|\rE_{(A_{m,n+1})' \cap M}^{\psi}(y_j) - \rE_{(A_{m,n})' \cap M}^{\psi}(y_j) \right\|_{\psi} \\
& \leq 2 \| p_{m}y_j - y_j p_{m} \|_\psi + \sum_{n \geq m} \left\|\rE_{(A_{m,n+1})' \cap M}^{\psi}(y_j) - \rE_{(A_{m,n})' \cap M}^{\psi}(y_j) \right\|_{\psi} \\
&\leq 2^{-m} + 2^{-m} = 2^{-(m - 1)}.
\end{align*}
It follows that $\lim_m \|y_j - \rE_{(A_{m})' \cap M}^{\psi}(y_j) \|_\psi = 0$ for all $j \in \N$. Since $Y \subset M$ is $\|\cdot\|_\psi$-dense, this implies that $\lim_m \|y - \rE_{(A_{m})' \cap M}^{\psi}(y)\|_\psi = 0$ for all $y \in M$ and hence $M = \bigvee_{n\in\N} ((A_n)' \cap M)$.

(iv) $\Rightarrow$ (i) For every $n \in \N$, choose a projection $p_n \in A_n \subset A_0$ such that $\varphi(p_n) = 2^{-1}$. Then $p := (p_n)^\omega \in M' \cap M^\omega$ and $\varphi^\omega(p) = 2^{-1}$. Therefore, $M' \cap M^\omega \neq \C1$ and hence $M' \cap M^\omega$ is diffuse since $M$ is a factor (see e.g.\ \cite[Corollary 2.6]{HR14}, or the final part of the proof of (ii) $\Rightarrow$ (iii)). 
\end{proof}

\section{Structure of AFP von Neumann algebras over arbitrary index sets}\label{structure}

In this section, we prove key results regarding the position of finite von Neumann subalgebras with expectation and with either nonamenable relative commutant (see Theorem \ref{thm-spectral-gap-general-AFP}) or nonamenable normalizer (see Theorem \ref{thm-normalizer-general-AFP}) inside arbitrary free product von Neumann algebras over arbitrary index sets.

\subsection*{Semifinite AFP von Neumann algebras over arbitrary index sets}

We will be using the following notation throughout this section.

\begin{nota}\label{notation-semifinite-AFP}
Let $I$ be any nonempty set and $(\mathcal B \subset \mathcal M_i)_{i \in I}$ any family of inclusions of semifinite $\sigma$-finite von Neumann algebras  with faithful normal conditional expectations $\rE_i : \mathcal M_i \to \mathcal B$, where $\mathcal B$ has a faithful normal semifinite trace $\Tr$ such that $\Tr\circ \rE_i$ is tracial on $\mathcal M_i$ for every $i \in I$. Assume moreover that $\mathcal B$ is amenable. Denote by $(\mathcal M, \rE) = \ast_{\mathcal B, i \in I} (\mathcal M_i, \rE_i)$ the corresponding semifinite amalgamated free product. For every nonempty subset $\mathcal G \subset I$, put $(\mathcal M_{\mathcal G}, \rE_{\mathcal G}) = \ast_{\mathcal B, i \in \mathcal G} (\mathcal M_i, \rE_i)$. By convention, put $\mathcal M_{\emptyset} := \mathcal B$. In this context, any trace means (an amplification $\Tr_n := \Tr \otimes \tr_n$ of) the trace $\Tr := \Tr\circ \rE$ or $\Tr\circ\rE_{\mathcal G}$.
\end{nota}

The next proposition will be used to reduce the problem of locating subalgebras inside arbitrary  semifinite amalgamated free product von Neumann algebras over {\em arbitrary} index sets to {\em finite} index sets.

\begin{prop}\label{proposition-infinite-AFP}
Keep Notation \ref{notation-semifinite-AFP}. Let $p \in \mathcal M$ be any nonzero finite trace projection and $\mathcal Q \subset p \mathcal M p$ any von Neumann subalgebra. Assume that for any nonzero projection $z \in \mathcal Q' \cap p \mathcal M p$ and any nonempty finite subset $\mathcal F \subset I$, we have $\mathcal Q z \preceq_{\mathcal M} \mathcal M_{\mathcal F^c}$. Then $\mathcal Q$ is amenable.
\end{prop}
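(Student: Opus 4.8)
The plan is to deduce amenability of $\mathcal Q$ from \emph{relative amenability}, exploiting that the base $\mathcal B$ is amenable. Since relative amenability is transitive and being amenable relative to an amenable subalgebra forces amenability, it suffices to prove that $\mathcal Q$ is amenable relative to $\mathcal B$ inside $\mathcal M$. The argument splits into a finite-index step, valid for each finite $\mathcal F$, followed by a limiting step $\mathcal F \uparrow I$, the latter being where the free product structure is genuinely used.

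First I would show that, for each nonempty finite $\mathcal F\subset I$, the algebra $\mathcal Q$ is amenable relative to $\mathcal M_{\mathcal F^c}$; this is where the quantifier ``for every nonzero $z\in\mathcal Q'\cap p\mathcal M p$'' in the hypothesis is used. Run a maximality argument: let $z_0\in\mathcal Q'\cap p\mathcal M p$ be the supremum of all projections $z$ for which $\mathcal Q z$ is amenable relative to $\mathcal M_{\mathcal F^c}$ (a class closed under increasing suprema). If $z_0\neq p$, apply the hypothesis to $z=p-z_0$ to get $\mathcal Q z\preceq_{\mathcal M}\mathcal M_{\mathcal F^c}$; by Theorem \ref{theorem-intertwining} together with Remark \ref{remark-intertwining} this realizes a corner of $\mathcal Q z$ inside an amplification of $\mathcal M_{\mathcal F^c}$, so that corner is amenable relative to $\mathcal M_{\mathcal F^c}$, and propagating relative amenability from the corner to its central support contradicts the maximality of $z_0$. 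Hence $z_0=p$. In particular, when $I$ is finite one takes $\mathcal F=I$, so $\mathcal M_{\mathcal F^c}=\mathcal B$ and the proof is already complete.

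The remaining and main difficulty is to pass from ``$\mathcal Q$ amenable relative to $\mathcal M_{\mathcal F^c}$ for every finite $\mathcal F$'' to ``$\mathcal Q$ amenable relative to $\mathcal B$''. This cannot follow from a single $\mathcal F$ nor from a soft general principle: since $\mathcal B\subset\mathcal M_{\mathcal F^c}$, monotonicity of relative amenability only yields the reverse (weaker) implication, so the conclusion genuinely requires taking $\mathcal F\uparrow I$ together with the freeness. Concretely, I would work in the semifinite basic constructions and record the $\mathcal Q$-central states $\Omega_{\mathcal F}$ on $p\langle\mathcal M, e_{\mathcal M_{\mathcal F^c}}\rangle p$ witnessing relative amenability over $\mathcal M_{\mathcal F^c}$, each restricting to the trace on $p\mathcal M p$. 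As $\mathcal M_{\mathcal F^c}\supset\mathcal B$, the algebras $\langle\mathcal M, e_{\mathcal M_{\mathcal F^c}}\rangle$ form an increasing net inside $\langle\mathcal M, e_{\mathcal B}\rangle$ whose union is strongly dense, since $e_{\mathcal M_{\mathcal F^c}}\searrow e_{\mathcal B}$ strongly. Extending each $\Omega_{\mathcal F}$ to $p\langle\mathcal M, e_{\mathcal B}\rangle p$ and taking a weak-$\ast$ limit $\Omega$ produces a state still restricting to the trace on $p\mathcal M p$; the task is to check that $\Omega$ is $\mathcal Q$-central on all of $p\langle\mathcal M, e_{\mathcal B}\rangle p$.

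This last verification is the crux, and it is precisely here that the free product structure enters. The obstruction is the Jones-projection defect $g_{\mathcal F}:=e_{\mathcal M_{\mathcal F^c}}-e_{\mathcal B}\geq0$, on which the extended states are a priori uncontrolled. The key point is the freeness identity $\rE_{\mathcal M_{\mathcal F^c}}|_{\mathcal M_{\mathcal F}}=\rE_{\mathcal B}|_{\mathcal M_{\mathcal F}}$, which shows that $e_{\mathcal M_{\mathcal F^c}}$ and $e_{\mathcal B}$ agree on $\rL^2(\mathcal M_{\mathcal F})$, that is, $g_{\mathcal F}$ vanishes on $\rL^2(\mathcal M_{\mathcal F})$. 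Since any fixed element $a e_{\mathcal B} b$ with $a,b$ finitely supported lies in $\mathcal M_{\mathcal F}$ once $\mathcal F$ contains their support, one expects the contribution of $g_{\mathcal F}$ to $\Omega([x, a e_{\mathcal B} b])$ for $x\in\mathcal Q$ to become negligible as $\mathcal F\uparrow I$, so that the $\mathcal Q$-centrality of $\Omega_{\mathcal F}$ (where $e_{\mathcal B}$ may be replaced by $e_{\mathcal M_{\mathcal F^c}}$ up to this defect, and $x$ may be approximated by finitely supported elements using trace-preservation on $p\mathcal M p$) passes to the limit. Making this negligibility rigorous, controlling quantities such as $\Omega_{\mathcal F}(b^* g_{\mathcal F} b)$ via the freeness, Cauchy--Schwarz, and a compatible choice of extensions, is the main obstacle I expect and the technical heart of the proof. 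Once $\Omega$ is shown to be $\mathcal Q$-central, it witnesses amenability of $\mathcal Q$ relative to $\mathcal B$, and amenability of $\mathcal B$ finishes the proof.
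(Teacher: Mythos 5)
Your overall architecture (finite-$\mathcal F$ step, then a limit $\mathcal F \uparrow I$, then ``amenable relative to the amenable $\mathcal B$ implies amenable'') is reasonable, and the first and last steps are essentially fine: intertwining $\mathcal Q z \preceq_{\mathcal M} \mathcal M_{\mathcal F^c}$ on every corner does yield, by a standard exhaustion, that $\mathcal Q$ is amenable relative to $\mathcal M_{\mathcal F^c}$, and relative amenability over an amenable subalgebra does force amenability. The gap is exactly where you locate it, and it is not a technicality that can be patched along the lines you sketch. A weak$^\ast$ limit of Hahn--Banach extensions $\widetilde\Omega_{\mathcal F}$ is a \emph{singular} state, so its $\mathcal Q$-centrality cannot be verified by testing on the strongly dense union $\bigcup_{\mathcal F}\langle \mathcal M, e_{\mathcal M_{\mathcal F^c}}\rangle$: the elements $a e_{\mathcal B} b$ you must test against lie in \emph{no} $\langle \mathcal M, e_{\mathcal M_{\mathcal F^c}}\rangle$, and strong density transfers nothing through a non-normal state. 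Your Cauchy--Schwarz control of the defect $g_{\mathcal F} = e_{\mathcal M_{\mathcal F^c}} - e_{\mathcal B}$ also cannot work: the best available bound is $\widetilde\Omega_{\mathcal F}(b^* g_{\mathcal F} b) \leq \Omega_{\mathcal F}(b^* e_{\mathcal M_{\mathcal F^c}} b)$, a quantity of order $\|b\|_\infty^2$ that has no reason to be small, and nothing prevents an arbitrary extension from charging $g_{\mathcal F}$ fully (the freeness identity $\rE_{\mathcal M_{\mathcal F^c}}|_{\mathcal M_{\mathcal F}} = \rE_{\mathcal B}|_{\mathcal M_{\mathcal F}}$ controls how $g_{\mathcal F}$ acts on vectors in $\rL^2(\mathcal M_{\mathcal F})$, not the values of a singular state on $b^* g_{\mathcal F} b$). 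Moreover, there is no conditional expectation $\langle \mathcal M, e_{\mathcal B}\rangle \to \langle \mathcal M, e_{\mathcal M_{\mathcal F^c}}\rangle$ that would allow a ``compatible choice of extensions''. At bottom, your intermediate claim --- relative amenability with respect to every $\mathcal M_{\mathcal F^c}$ implies relative amenability with respect to the intersection $\mathcal B$ --- is an instance of the intersection problem for relative amenability, which is not a soft fact; in all known settings of this kind (including the appendix of this paper and the results of \cite{Va13, PV11} it relies on) such statements are obtained through deformation/spectral gap techniques, not through limits of states.

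The paper's proof avoids relative amenability for finite $\mathcal F$ altogether and supplies precisely the missing quantitative control. From the hypothesis applied to \emph{all} corners $z$, it extracts (via \cite[Lemma 4.11]{HI15}) explicit intertwiners $w_{\mathcal F} \in \mathbf M_{1, n_{\mathcal F}}(p\mathcal M)q_{\mathcal F}$ with $w_{\mathcal F}w_{\mathcal F}^* \to p$ strongly, and (via \cite[Theorem 2.5]{BHR12}, using that $\mathcal Q$ has no amenable direct summand and $\mathcal B$ is amenable) arranges $w_{\mathcal F}^* \mathcal Q w_{\mathcal F} \subset q_{\mathcal F}\mathbf M_{n_{\mathcal F}}(\mathcal M_{\mathcal F^c})q_{\mathcal F}$. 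It then works in the double free product $\widetilde{\mathcal M} = \mathcal M \ast_{\mathcal B} \mathcal M$ with the free flip $\Theta$ and its partial versions $\Theta_{\mathcal F}$ fixing $\widetilde{\mathcal M}_{\mathcal F^c}$: the vectors $\xi_{\mathcal F} := (\id_{n_{\mathcal F}}\otimes\Theta_{\mathcal F})(w_{\mathcal F})w_{\mathcal F}^*$ satisfy the \emph{exact} twisted centrality $\Theta_{\mathcal F}(a)\xi_{\mathcal F} = \xi_{\mathcal F}a$, they live in the $\Theta$-twisted bimodule $\rL^2(\widetilde{\mathcal M}) \cong \rL^2(\mathcal M)\otimes_{\mathcal B}\mathcal L\otimes_{\mathcal B}\rL^2(\mathcal M)$, which is weakly contained in the coarse bimodule because $\mathcal B$ is amenable (\cite[Lemma 1.7]{AD93}), and the convergence $\Theta_{\mathcal F} \to \Theta$ turns exact twisted centrality into asymptotic centrality; Connes's theorem then produces an amenable direct summand of $\mathcal Q$, the desired contradiction. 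In short, the deformation $\Theta_{\mathcal F}\to\Theta$ acting on a genuinely coarse-over-$\mathcal B$ bimodule is what replaces --- and is needed in place of --- your uncontrollable weak$^\ast$ limit of extended states.
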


\begin{proof}
The proof uses an idea due to Ioana. By contradiction, assume that $\mathcal Q$ is not amenable. Up to cutting down by a nonzero central projection in $\mathcal Z(\mathcal Q)$ if necessary, we may assume without loss of generality that $\mathcal Q$ has no amenable direct summand and that for any nonzero projection $z \in \mathcal Q' \cap p \mathcal M p$ and any nonempty finite subset $\mathcal F \subset I$, we have $\mathcal Q z \preceq_{\mathcal M} \mathcal M_{\mathcal F^c}$. By assumption and using \cite[Lemma 4.11]{HI15}, for every nonempty finite subset $\mathcal F \subset I$, there exist $n_{\mathcal F} \geq 1$, a finite trace projection $q_{\mathcal F} \in \mathbf M_{n_{\mathcal F}}(\mathcal M_{\mathcal F^c})$, a nonzero partial isometry $w_{\mathcal F} \in \mathbf M_{1, n_{\mathcal F}}(p\mathcal M)q_{\mathcal F}$ and a unital normal $\ast$-homomorphism $\pi_{\mathcal F} : \mathcal Q \to q_{\mathcal F}\mathbf M_{n_{\mathcal F}}(\mathcal M_{\mathcal F^c})q_{\mathcal F}$ such that $a w_{\mathcal F} = w_{\mathcal F} \pi_{\mathcal F}(a)$ for all $a \in \mathcal Q$ and $\lim_{\mathcal F} w_{\mathcal F}w_{\mathcal F}{}^* = p = 1_{\mathcal Q}$. Observe that $w_{\mathcal F}w_{\mathcal F}{}^* \in \mathcal Q' \cap p \mathcal M p$ and $w_{\mathcal F}{}^*w_{\mathcal F} \in \pi_{\mathcal F}(\mathcal Q)' \cap q_{\mathcal F}\mathbf M_{n_{\mathcal F}}(\mathcal M)q_{\mathcal F}$. Since $\pi_{\mathcal F}(\mathcal Q)$ has no amenable direct summand and $\mathcal B$ is amenable, we have $\pi_{\mathcal F}(\mathcal Q) \npreceq_{\mathbf M_{n_{\mathcal F}}(\mathcal M)} \mathbf M_{n_{\mathcal F}}(\mathcal B)$ and hence \cite[Theorem 2.5]{BHR12} shows $w_{\mathcal F}{}^*w_{\mathcal F} \in q_{\mathcal F}\mathbf M_{n_{\mathcal F}}(\mathcal M_{\mathcal F^c})q_{\mathcal F}$ for all $\mathcal F$. Thus, we may assume that $q_{\mathcal F} = w_{\mathcal F}{}^*w_{\mathcal F} \in \mathbf M_{n_{\mathcal F}}(\mathcal M_{\mathcal F^c})$ for all $\mathcal F$. It follows that $w_{\mathcal F}{}^* \mathcal Q w_{\mathcal F} \subset q_{\mathcal F}\mathbf M_{n_{\mathcal F}}(\mathcal M_{\mathcal F^c})q_{\mathcal F}$ for all $\mathcal F$.

Put $\widetilde {\mathcal M} := \mathcal M \ast_{\mathcal B} \mathcal M$, where we regard the left-hand copy of $\mathcal M$ as the original $\mathcal M$, and denote by $\Theta \in \Aut(\widetilde {\mathcal M})$ the free flip (trace preserving) automorphism. Likewise, for every $\mathcal F$, put $\widetilde {\mathcal M}_{\mathcal F} := \mathcal M_{\mathcal F} \ast_{\mathcal B} \mathcal M_{\mathcal F}$ and denote by $\Theta_{\mathcal F} \in \Aut(\widetilde {\mathcal M}_{\mathcal F})$ the free flip (trace preserving) automorphism. Regard $\Theta_{\mathcal F} \in \Aut(\widetilde {\mathcal M})$ by letting $\Theta_{\mathcal F}|_{\widetilde {\mathcal M}_{\mathcal F^c}} = \id_{\widetilde {\mathcal M}_{\mathcal F^c}}$ where $\widetilde {\mathcal M}_{\mathcal F^c} := \mathcal M_{\mathcal F^c} \ast_{\mathcal B} \mathcal M_{\mathcal F^c}$. We have $\lim_{\mathcal F} \Theta_{\mathcal F} = \Theta$ in $\Aut(\widetilde {\mathcal M})$. Observe that since $w_{\mathcal F}{}^* \mathcal Q w_{\mathcal F} \subset q_{\mathcal F}\mathbf M_{n_{\mathcal F}}(\mathcal M_{\mathcal F^c})q_{\mathcal F}$, we have $(\id_{n_{\mathcal F}} \otimes \Theta_{\mathcal F})(w_{\mathcal F}{}^* a w_{\mathcal F}) = w_{\mathcal F}{}^* a w_{\mathcal F}$ for all $a \in \mathcal Q$. Letting $\xi_{\mathcal F} := (\id_{n_{\mathcal F}} \otimes \Theta_{\mathcal F})(w_{\mathcal F}) w_{\mathcal F}{}^*$, for all $a \in \mathcal Q$, we have 
\begin{align*}
\Theta_{\mathcal F}(a) \, \xi_{\mathcal F} &= \Theta_{\mathcal F}(a) \, (\id_{n_{\mathcal F}} \otimes \Theta_{\mathcal F}) (w_{\mathcal F}) w_{\mathcal F}{}^* \\
&= (\id_{n_{\mathcal F}} \otimes \Theta_{\mathcal F}) (a w_{\mathcal F}) w_{\mathcal F}{}^* \\
&= (\id_{n_{\mathcal F}} \otimes \Theta_{\mathcal F}) (w_{\mathcal F} \, w_{\mathcal F}{}^*a w_{\mathcal F}) w_{\mathcal F}{}^* \\
&= (\id_{n_{\mathcal F}} \otimes \Theta_{\mathcal F}) (w_{\mathcal F}) \, (\id_{n_{\mathcal F}} \otimes \Theta_{\mathcal F})(w_{\mathcal F}{}^*a w_{\mathcal F}) \, w_{\mathcal F}{}^* \\
&= (\id_{n_{\mathcal F}} \otimes \Theta_{\mathcal F}) (w_{\mathcal F}) \, w_{\mathcal F}{}^*a w_{\mathcal F} \, w_{\mathcal F}{}^* \\
&= (\id_{n_{\mathcal F}} \otimes \Theta_{\mathcal F}) (w_{\mathcal F})w_{\mathcal F}{}^* \, a \\
&= \xi_{\mathcal F} \, a.
\end{align*}

Endow $\mathcal H := \rL^2(\widetilde{\mathcal M})$ with the $\mathcal M$-$\mathcal M$-bimodule structure given by $x \cdot \eta \cdot y := \Theta(x) \, \eta \, y$ for all $x, y \in \mathcal M$ and all $\eta \in \rL^2(\widetilde{\mathcal M})$. By construction and using \cite[Section 2]{Ue98a}, there exists a $\mathcal B$-$\mathcal B$-bimodule $\mathcal L$ such that we have the following isomorphism
$$\mathcal H \cong \rL^2(\mathcal M) \otimes_{\mathcal B} \mathcal L \otimes_{\mathcal B} \rL^2(\mathcal M)$$
as $\mathcal M$-$\mathcal M$-bimodules. (Indeed, for any amalgamated free product $(M,E) = (M_1,E_1)\ast_B (M_2,E_2)$ we have $\rL^2(M) \cong \rL^2(M_2)\otimes_B \mathcal K \otimes_B \rL^2(M_1)$ as $M_2$-$M_1$-bimodule with $\mathcal K := \rL^2(B)\oplus (\rL^2(M_1^\circ)\otimes_B\rL^2(M_2^\circ))\oplus\cdots\oplus(\rL^2(M_1^\circ)\otimes_B\cdots\otimes_B\rL^2(M_2^\circ))\oplus\cdots$.) Since $\mathcal B$ is amenable, \cite[Lemma 1.7]{AD93} shows that the $\mathcal M$-$\mathcal M$-bimodule $\mathcal H$ is weakly contained in the coarse $\mathcal M$-$\mathcal M$-bimodule $\rL^2(\mathcal M) \otimes \rL^2(\mathcal M)$. This implies (see the proof of \cite[Proposition 3.1]{CH08}) that the $p\mathcal Mp$-$p\mathcal Mp$-bimodule $p \cdot \mathcal H\cdot p$ is weakly contained in the coarse $p\mathcal Mp$-$p\mathcal Mp$-bimodule $\rL^2(p\mathcal M p) \otimes \rL^2(p \mathcal M p)$.

Regard $\xi_{\mathcal F} \in \mathcal H$ and put $\eta_{\mathcal F} := p \cdot \xi_{\mathcal F} \cdot p \in p \cdot \mathcal H \cdot p$. First, we have
\begin{align*}
\|\eta_{\mathcal F} - \xi_{\mathcal F}\|_2 
&= \|\Theta(p) \, \xi_{\mathcal F} - \xi_{\mathcal F} \|_2 \qquad\qquad\qquad \text{(since $\eta_{\mathcal F} = \Theta(p) \, \xi_{\mathcal F}$)} \\
&\leq \|(\Theta(p) - \Theta_{\mathcal F}(p)) \, \xi_{\mathcal F}\|_2 \qquad 
\text{(since $\Theta_{\mathcal F}(p) \, \xi_{\mathcal F} = \xi_{\mathcal F} \, p = \xi_{\mathcal F}$)} \\
& \leq \|\Theta(p) - \Theta_{\mathcal F}(p) \|_2 \, \|\xi_{\mathcal F}\|_\infty \\
& \leq \|\Theta(p) - \Theta_{\mathcal F}(p) \|_2 \to 0 \quad \text{as} \quad \mathcal F \to \infty.
\end{align*}
Then we have 
\begin{align*}
\|\xi_{\mathcal F}\|_2^2 &= \Tr(w_{\mathcal F} \, (\id_{n_{\mathcal F}} \otimes \Theta_{\mathcal F})(w_{\mathcal F}{}^* w_{\mathcal F}) \, w_{\mathcal F}{}^* ) \\
&= \Tr_{n_{\mathcal F}}((\id_{n_{\mathcal F}} \otimes \Theta_{\mathcal F})(w_{\mathcal F}{}^* w_{\mathcal F}) \, w_{\mathcal F}{}^*w_{\mathcal F} ) \\
&= \Tr_{n_{\mathcal F}}(w_{\mathcal F}{}^* w_{\mathcal F}) \qquad \text{(since $w_{\mathcal F}{}^*w_{\mathcal F} \in \mathbf M_{n_{\mathcal F}}(\widetilde{\mathcal M}_{\mathcal F^c})$)} \\
&= \Tr(w_{\mathcal F} w_{\mathcal F}{}^*) \to \Tr(p) \quad \text{as} \quad \mathcal F \to \infty.
\end{align*}
Since $\lim_{\mathcal F} \|\eta_{\mathcal F} - \xi_{\mathcal F}\|_2 = 0$, this implies that $\lim_{\mathcal F} \|\eta_{\mathcal F}\|_2 = \|p\|_2$. For all $x \in p\mathcal M p$ and all $\mathcal F$, we have
\begin{align*}
\|x \cdot \eta_{\mathcal F}\|_2 
= \|\Theta(x) \, \eta_{\mathcal F}\|_2 
\leq \|\Theta(x)\|_2 \, \|\eta_{\mathcal F}\|_\infty 
\leq \|x\|_2.
\end{align*}
For every $a \in \mathcal Q$, we have
\begin{align*}
\|a \cdot \xi_{\mathcal F} - \xi_{\mathcal F} \cdot a\|_2 
&= \|\Theta(a) \, \xi_{\mathcal F} - \xi_{\mathcal F} \, a\|_2 \\ 
&\leq \|(\Theta(a) - \Theta_{\mathcal F}(a)) \, \xi_{\mathcal F}\|_2 \qquad \text{(since $\Theta_{\mathcal F}(a) \, \xi_{\mathcal F} = \xi_{\mathcal F} \, a$)}\\
& \leq \|\Theta(a) - \Theta_{\mathcal F}(a)\|_2 \, \| \xi_{\mathcal F}\|_\infty \\
& \leq \|\Theta(a) - \Theta_{\mathcal F}(a)\|_2, 
\end{align*}
and hence $\lim_{\mathcal F} \| a \cdot \xi_{\mathcal F} -  \xi_{\mathcal F} \cdot a\|_2 = 0$. Since $\lim_{\mathcal F} \|\eta_{\mathcal F} - \xi_{\mathcal F}\|_2 = 0$, this implies that $\lim_{\mathcal F} \|a \cdot \eta_{\mathcal F} - \eta_{\mathcal F} \cdot a\|_2 = 0$ for all $a \in \mathcal Q$. By Connes's characterization of amenability \cite{Co75} applied to the finite von Neumann algebra $\mathcal Q$ and the net $(\eta_{\mathcal F})_{\mathcal F}$ in $p \cdot \mathcal H \cdot p$ (see also \cite[Lemma 2.3]{Io12}), it follows that $\mathcal Q$ has an amenable direct summand, a contradiction.
\end{proof}

\subsection*{Relative commutants inside AFP von Neumann algebras}

We begin by studying relative commutants inside {\em semifinite} amalgamated free product von Neumann algebras.

\begin{thm}\label{thm-spectral-gap-semifinite-AFP}
Keep Notation \ref{notation-semifinite-AFP}. Let $p \in \mathcal M$ be any nonzero finite trace projection and $\mathcal Q \subset p \mathcal M p$ any von Neumann subalgebra with no amenable direct summand and such that $\mathcal Q' \cap p \mathcal M p \npreceq_{\mathcal M} \mathcal B$. Then there exists $i \in I$ such that $\mathcal Q \preceq_{\mathcal M} \mathcal M_i$.
\end{thm}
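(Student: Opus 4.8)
The plan is to reduce to the case of two free factors and then run Popa's spectral gap argument. For the reductions, I would first treat a finite index set $I=\{1,\dots,n\}$ by induction on $n$: writing $\mathcal M=\mathcal M_1\ast_{\mathcal B}\mathcal M_{\{2,\dots,n\}}$, the two-factor case yields either $\mathcal Q\preceq_{\mathcal M}\mathcal M_1$, in which case we are done, or $\mathcal Q\preceq_{\mathcal M}\mathcal M_{\{2,\dots,n\}}$. In the latter case I would use Lemma \ref{lemma-intertwining} (in its semifinite form) to transport the whole configuration inside $\mathbf M_k(\mathcal M_{\{2,\dots,n\}})$; note that $\mathcal Q\npreceq_{\mathcal M}\mathcal B$ automatically, since $\mathcal B$ is amenable and $\mathcal Q$ has no amenable direct summand, so the hypotheses of that lemma are met. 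This produces $\pi(\mathcal Q)\npreceq\mathbf M_k(\mathcal B)$ with relative commutant still not embedding into $\mathbf M_k(\mathcal B)$; I would apply the induction hypothesis there and transport back with Remark \ref{remark-intertwining}(2) and Proposition \ref{prop-with-expectation}. For an arbitrary index set $I$, Proposition \ref{proposition-infinite-AFP} applied to $\mathcal Q$ produces a nonzero $z\in\mathcal Q'\cap p\mathcal M p$ and a nonempty finite $\mathcal F\subset I$ with $\mathcal Q z\npreceq_{\mathcal M}\mathcal M_{\mathcal F^c}$; viewing $\mathcal M=\mathcal M_{\mathcal F}\ast_{\mathcal B}\mathcal M_{\mathcal F^c}$ and applying the two-factor case to the corner forces $\mathcal Q z\preceq_{\mathcal M}\mathcal M_{\mathcal F}$, and the finite case inside $\mathcal M_{\mathcal F}$ together with $z\in\mathcal Q'\cap p\mathcal M p$ then promotes this to $\mathcal Q\preceq_{\mathcal M}\mathcal M_i$ for some $i\in\mathcal F$, after the routine central cut-down needed to pass the hypothesis $\mathcal P\npreceq_{\mathcal M}\mathcal B$ to the corner.

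The heart is therefore the two-factor case $\mathcal M=\mathcal M_1\ast_{\mathcal B}\mathcal M_2$, which I would establish by contraposition. Assume $\mathcal Q\npreceq_{\mathcal M}\mathcal M_1$ and $\mathcal Q\npreceq_{\mathcal M}\mathcal M_2$; the goal is to deduce $\mathcal P:=\mathcal Q'\cap p\mathcal M p\preceq_{\mathcal M}\mathcal B$, contradicting the hypothesis. As in the proof of Proposition \ref{proposition-infinite-AFP}, I would form the amalgamated free product $\widetilde{\mathcal M}=\mathcal M\ast_{\mathcal B}\mathcal M$ with its trace-preserving free flip $\Theta$ and equip $\mathcal H:=\rL^2(\widetilde{\mathcal M})$ with the twisted bimodule structure $x\cdot\eta\cdot y:=\Theta(x)\,\eta\,y$; the $\mathcal B$-$\mathcal B$-bimodule decomposition of $\rL^2(\widetilde{\mathcal M})$ together with the amenability of $\mathcal B$ and \cite[Lemma 1.7]{AD93} shows that $\mathcal H$, and hence $p\cdot\mathcal H\cdot p$, is weakly contained in the corresponding coarse bimodule.

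Since $\mathcal Q$ has no amenable direct summand, Connes's characterization of amenability \cite{Co75} in the quantitative form underlying Popa's spectral gap argument \cite{Po06} provides $\kappa>0$ and a finite set $F\subset\mathcal U(\mathcal Q)$ with $\|\xi\|_2\le\kappa\sum_{a\in F}\|a\cdot\xi-\xi\cdot a\|_2$ for every $\xi\in p\cdot\mathcal H\cdot p$; in particular this coarse-contained bimodule carries no nonzero $\mathcal Q$-central vector. I would then use the two non-embedding hypotheses to show that, modulo the $\rL^2(\mathcal B)$-part, every element of $\mathcal P$ gives rise to such an almost $\mathcal Q$-central vector: the key analytic input is that $\mathcal Q\npreceq_{\mathcal M}\mathcal M_i$ yields, via Theorem \ref{theorem-intertwining}(3), nets of unitaries in $\mathcal U(\mathcal Q)$ that are $\rE_{\mathcal M_i}$-mixing, and the freeness estimates of Lemma \ref{lemma-control-sequence} propagate this mixing to the entire complement $\mathcal M\ominus\mathcal B$. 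Feeding these vectors into the spectral-gap inequality forces the off-$\mathcal B$ mass of each $w\in\mathcal U(\mathcal P)$ to vanish along the net, which is precisely the intertwining statement $\mathcal P\preceq_{\mathcal M}\mathcal B$ and gives the contradiction. Throughout, \cite[Theorem 2.5]{BHR12} is available to locate supports and relative commutants inside the chosen factor.

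The step I expect to be the main obstacle is exactly this last one: manufacturing honest almost $\mathcal Q$-central vectors in the twisted bimodule out of the relative commutant $\mathcal P$ and the $\rE_{\mathcal M_i}$-mixing nets, and checking that the surviving term is genuinely an intertwiner of $\mathcal P$ into $\mathcal B$ rather than into some $\mathcal M_i$. This requires combining the reduced-word orthogonality relations of the amalgamated free product with both non-embedding hypotheses simultaneously and a careful choice of test vectors so that their norms stay bounded below while their $\mathcal Q$-commutators decay. A secondary, pervasive difficulty is the semifinite bookkeeping: since $\mathcal M$ is of type ${\rm II}_\infty$, every intertwining must be read through the semifinite version of Theorem \ref{theorem-intertwining} with $\Tr$-finite projections, every cut-down must respect the trace, and each passage between corners must preserve the with-expectation property via Proposition \ref{prop-with-expectation} and Remark \ref{remark-intertwining}.
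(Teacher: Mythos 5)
Your outer scaffolding does coincide with the paper's: the infinite index set is handled by playing Proposition \ref{proposition-infinite-AFP} against a two-factor dichotomy, and finite index sets are handled by induction with an intertwining transport. But the paper never proves the two-factor case from scratch: it invokes \cite[Corollary 4.2]{HU15}, whose hypothesis concerns the \emph{ultraproduct} relative commutant $\mathcal Q' \cap (p\mathcal M p)^\omega$, and accordingly the very first step of the paper's proof is Proposition \ref{proposition-ultraproducts}, which upgrades $\mathcal Q' \cap p\mathcal M p \npreceq_{\mathcal M} \mathcal B$ to $\mathcal Q' \cap (p\mathcal M p)^\omega \npreceq_{\mathcal M^\omega} \mathcal B^\omega$. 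Your proposal has no substitute for this step, and the self-contained spectral gap argument you sketch in its place fails at exactly the point you flag as the main obstacle. In Popa's spectral gap argument the almost $\mathcal Q$-central vectors are \emph{not} manufactured from mixing nets: they come from a malleable deformation $\theta_t$ evaluated on elements $x \in \mathcal P := \mathcal Q' \cap p\mathcal M p$, and they are almost central because $\theta_t(x)$ commutes with $\theta_t(u)$ while $\theta_t(u) \approx u$ for $u$ in a fixed finite subset of $\mathcal U(\mathcal Q)$. The $\rE_{\mathcal M_i}$-mixing nets you extract from $\mathcal Q \npreceq_{\mathcal M} \mathcal M_i$ do the opposite of producing central vectors, and Lemma \ref{lemma-control-sequence} cannot \emph{propagate} their mixing: that lemma applies only to unitaries lying inside one free factor (in $(M_1)^\psi$), not to unitaries of a subalgebra $\mathcal Q \subset p\mathcal M p$ in general position. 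Moreover, the natural output of the spectral gap mechanism is $\mathcal P \preceq_{\mathcal M} \mathcal M_1$ or $\mathcal P \preceq_{\mathcal M} \mathcal M_2$, never directly $\mathcal P \preceq_{\mathcal M} \mathcal B$; the hypothesis $\mathcal P \npreceq_{\mathcal M} \mathcal B$ is then an \emph{active ingredient} -- combined with \cite[Proposition 2.7]{BHR12} and $\mathcal Q \subset \mathcal N_{p\mathcal M p}(\mathcal P)\dpr$ it transfers the conclusion from the relative commutant to $\mathcal Q$ itself -- not merely a statement to be contradicted at the end, as your contrapositive framing has it. (Also, the inequality $\|\xi\|_2 \le \kappa \sum_{a \in F} \|a\cdot\xi - \xi\cdot a\|_2$ for \emph{all} $\xi$ is more than no amenable direct summand gives; compare Lemma \ref{L2}, where a subtraciality condition on the vectors is essential.)

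A second, repairable but real, gap sits in your transport step for the finite-index induction. Applying Lemma \ref{lemma-intertwining} to $\mathcal Q$ alone produces $w$ with $ww^* \in \mathcal Q' \cap p\mathcal M p$, but $ww^*$ is in general \emph{not central} in $\mathcal Q' \cap p\mathcal M p$; consequently $w^*(\mathcal Q' \cap p\mathcal M p)w$ need not be an algebra, need not commute with $w^*\mathcal Q w$, and there is no reason it lands in the corner of $\mathbf M_k(\mathcal M_{\mathcal F})$, so your claim that the relative commutant is carried along with its non-embedding into $\mathbf M_k(\mathcal B)$ is unjustified. The paper avoids this by intertwining the larger algebra $\mathcal P = \mathcal Q \vee (\mathcal Q' \cap p\mathcal M p)$ -- legitimate by \cite[Proposition 2.7]{BHR12} since $\mathcal Q \preceq_{\mathcal M} \mathcal M_{\mathcal F}$ and $\mathcal Q \npreceq_{\mathcal M} \mathcal B$ -- so that $ww^* \in \mathcal Z(\mathcal P)$ commutes with everything in sight, and by using \cite[Theorem 2.5]{BHR12} to place $w^*w$ inside $q\mathbf M_n(\mathcal M_{\mathcal F})q$; only then do $w^*\mathcal Q w$ and $w^*(\mathcal Q' \cap p\mathcal M p)w$ become commuting subalgebras of a corner of $\mathbf M_n(\mathcal M_{\mathcal F})$ inheriting both hypotheses (via Remark \ref{remark-intertwining} (2)). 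In short: your reductions follow the paper's skeleton, but the core two-factor engine, which in the paper is the ultraproduct transfer plus \cite[Corollary 4.2]{HU15}, is replaced by a sketch whose central mechanism does not work as described.
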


\begin{proof}
Since $\mathcal Q' \cap p \mathcal M p \npreceq_{\mathcal M} \mathcal B$, we have $(\mathcal Q' \cap p \mathcal M p)^\omega \npreceq_{\mathcal M^\omega} \mathcal B^\omega$  by Proposition \ref{proposition-ultraproducts}. Since $(\mathcal Q' \cap p \mathcal M p)^\omega \subset \mathcal Q' \cap (p \mathcal M p)^\omega$, we also have $\mathcal Q' \cap (p \mathcal M p)^\omega \npreceq_{\mathcal M^\omega} \mathcal B^\omega$. For each nonempty finite subset $\mathcal F \subset I$, regard $\mathcal M = \mathcal M_{\mathcal F} \ast_{\mathcal B} \mathcal M_{\mathcal F^c}$. By \cite[Corollary 4.2]{HU15}, for every nonzero projection $z \in \mathcal Q' \cap p \mathcal M p$, we have $\mathcal Q z \preceq_{\mathcal M} \mathcal M_{\mathcal F}$ or $\mathcal Q z \preceq_{\mathcal M} \mathcal M_{\mathcal F^c}$. Since $\mathcal Q$ has no amenable direct summand, there exists a nonzero projection $z \in \mathcal Q' \cap p \mathcal M p$ and a nonempty finite subset $\mathcal F$ such that $\mathcal Q z \preceq_{\mathcal M} \mathcal M_{\mathcal F}$ by Proposition \ref{proposition-infinite-AFP}. Therefore, we have  $\mathcal Q \preceq_{\mathcal M} \mathcal M_{\mathcal F}$. Put $\mathcal P := \mathcal Q \vee (\mathcal Q' \cap p \mathcal M p)$. Since $\mathcal Q \npreceq_{\mathcal M} \mathcal B$, we also have that $\mathcal P \preceq_{\mathcal M} \mathcal M_{\mathcal F}$ by \cite[Proposition 2.7]{BHR12}.

Then there exist $n \geq 1$, a finite trace projection $q \in \mathbf M_n(\mathcal M_{\mathcal F})$, a nonzero partial isometry $w \in \mathbf M_{1, n}(p\mathcal M)q$ and a unital normal $\ast$-homomorphism $\pi : \mathcal P \to q\mathbf M_n(\mathcal M_{\mathcal F})q$ such that $a w = w \pi(a)$ for all $a \in  \mathcal P$. Observe that $ww^* \in \mathcal P' \cap p \mathcal M p = \mathcal Z(\mathcal P)$ and $w^*w \in \pi(\mathcal P)' \cap q\mathbf M_n(\mathcal M)q$. Since $\mathcal P$ has no amenable direct summand, we have $\pi( \mathcal P) \npreceq_{\mathbf M_n(\mathcal M_{\mathcal F})} \mathbf M_n(\mathcal B)$. This implies that $w^*w \in q\mathbf M_n(\mathcal M_{\mathcal F})q$ by \cite[Theorem 2.5]{BHR12} and hence we may assume that $q = w^*w$. We obtain $w^* \mathcal P w \subset q\mathbf M_n(\mathcal M_{\mathcal F})q$. Observe that $w^* \mathcal Q w$ and $w^* (\mathcal Q' \cap p \mathcal M p) w$ are commuting unital subalgebras of $w^*\mathcal Pw$ such that $w^* \mathcal Q w$ has no amenable direct summand and $w^* (\mathcal Q' \cap p \mathcal M p) w \npreceq_{\mathbf M_n(\mathcal M_{\mathcal F})} \mathbf M_n(\mathcal B)$ by Remark \ref{remark-intertwining} (2) (recall that $\mathcal Q' \cap p \mathcal M p \npreceq_{\mathcal M} \mathcal B$). Observe that for each $i \in \mathcal F$, $w^* \mathcal Q w \preceq_{\mathbf M_n(\mathcal M_{\mathcal F})} \mathbf M_n(\mathcal M_i)$ leads to $\mathcal Q \preceq_{\mathcal M} \mathcal M_i$ by Remark \ref{remark-intertwining} (2).  Therefore, we have showed that in order to prove Theorem \ref{thm-spectral-gap-semifinite-AFP}, we may assume that the index set $I$ is finite.

When the index set $I$ is finite, a straightforward induction procedure over $k := |I|$ using a combination of the above reasoning with \cite[Corollary 4.2]{HU15} proves the result. Indeed, assume that the result is true for any set $I$ such that $|I| = k$ with $k \geq 1$. Next, let $I$ be any set such that $|I| = k + 1$. Simply denote $I = \{1, \dots, k + 1\}$. Regard $\mathcal M = \mathcal M_{\mathcal F} \ast_{\mathcal B} \mathcal M_{k + 1}$ where $\mathcal F = \{1, \dots, k\}$. The same reasoning as in the first paragraph above shows that $\mathcal Q \preceq_{\mathcal M} \mathcal M_{\mathcal F}$ or $\mathcal Q \preceq_{\mathcal M} \mathcal M_{k + 1}$. If $\mathcal Q \preceq_{\mathcal M} \mathcal M_{k + 1}$, we are done. If $\mathcal Q \preceq_{\mathcal M} \mathcal M_{\mathcal F}$, the same reasoning as in the second paragraph above shows that with letting $\mathcal P := \mathcal Q \vee \mathcal (Q' \cap p \mathcal Mp)$ there exists $n \geq 1$, a finite trace projection $q \in \mathbf M_n(\mathcal M_{\mathcal F})$, a nonzero partial isometry $w \in \mathbf M_{1, n}(p\mathcal M)q$ and a unital normal $\ast$-homomorphism $\pi : \mathcal P \to q\mathbf M_n(\mathcal M_{\mathcal F})q$ such that $a w = w \pi(a)$ for all $a \in \mathcal P$. We may moreover assume that $w^*w = q$. Then $w^* \mathcal Q w$ and $w^* (\mathcal Q' \cap p \mathcal M p) w$ are commuting unital subalgebras of $w^*\mathcal Pw$ such that $w^* \mathcal Q w$ has no amenable direct summand and $w^* (\mathcal Q' \cap p \mathcal M p) w \npreceq_{\mathbf M_n(\mathcal M_{\mathcal F})} \mathbf M_n(\mathcal B)$. Using the induction hypothesis, there exists $i \in \mathcal F = \{1, \dots, k\}$ such that $w^* \mathcal Q w \preceq_{\mathbf M_n (\mathcal M_{\mathcal F})} \mathbf M_n(\mathcal M_i)$. Then $\mathcal Q \preceq_{\mathcal M} \mathcal M_i$ holds by Remark \ref{remark-intertwining} (2). This finishes the proof of the induction procedure and completes the proof of Theorem \ref{thm-spectral-gap-semifinite-AFP}.
\end{proof}

We now prove a general result locating finite subalgebras with expectation and with nonamenable relative commutant inside {\em arbitrary} amalgamated free product von Neumann algebras. This result will be used in the proof of the Main Theorem (Cases (i) and (ii)).

\begin{thm}\label{thm-spectral-gap-general-AFP}
Let $I$ be any nonempty set and $(B \subset M_i)_{i \in I}$ any family of inclusions of $\sigma$-finite von Neumann algebras with faithful normal conditional expectations $\rE_i : M_i \to B$. Assume moreover that $B$ is amenable. Denote by $(M, \rE) = \ast_{B, i \in I} (M_i, \rE_i)$ the corresponding amalgamated free product.

Let $1_A \in M$ be any nonzero projection and $A \subset 1_AM1_A$ any finite von Neumann subalgebra with expectation. Then at least one of the following conditions holds true:
\begin{itemize}
\item There exists $i \in I$ such that $A \preceq_{M} M_i$.
\item The von Neumann subalgebra $A' \cap 1_AM1_A$ is amenable.
\end{itemize}
\end{thm}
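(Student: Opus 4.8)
The plan is to reduce the assertion to its semifinite counterpart, Theorem~\ref{thm-spectral-gap-semifinite-AFP}, by passing to the continuous core, and to argue by contraposition: assuming $A \npreceq_M M_i$ for every $i \in I$, I will show that $Q := A' \cap 1_A M 1_A$ is amenable. To set up the core, fix a faithful state $\varphi \in B_\ast$ and put $\widetilde\varphi := \varphi \circ \rE \in M_\ast$. Since $\sigma^{\widetilde\varphi}_t = \ast_i \sigma^{\varphi \circ \rE_i}_t$ globally preserves each $M_i$ and restricts to $\sigma^\varphi$ on $B$ (see \cite{Ue98a}), the continuous core $\mathcal M := \core_{\widetilde\varphi}(M)$ decomposes as a semifinite amalgamated free product $\mathcal M = \ast_{\mathcal B, i}(\mathcal M_i, \rE_{\mathcal M_i})$ over $\mathcal B := \core_\varphi(B)$, where $\mathcal M_i := \core_{\varphi \circ \rE_i}(M_i)$ and $\rL_{\widetilde\varphi}(\R) \subset \mathcal B \subset \mathcal M_i$ is the common copy of the flow. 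As $B$ is amenable, so is its crossed product $\mathcal B$ by $\R$, placing us in Notation~\ref{notation-semifinite-AFP}. Embedding $M \subset \mathcal M$ via $\pi_{\widetilde\varphi}$, the $\widetilde\varphi$-preserving expectation $\rE_{\mathcal M_i} : \mathcal M \to \mathcal M_i$ restricts to $\rE_{M_i}$ on $M$.

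The next step is to transfer the non-intertwining to the core. Using condition (3) of Theorem~\ref{theorem-intertwining}, the hypothesis $A \npreceq_M M_i$ yields a net $(w_k)$ in $\mathcal U(A)$ with $\rE_{M_i}(b^* w_k a) \to 0$ $\ast$-strongly for all $a, b \in 1_A M$. Since $\lambda_{\widetilde\varphi}(\R) \subset \mathcal M_i$ and $\spn(1_A M\, \lambda_{\widetilde\varphi}(\R))$ is $\sigma$-strongly dense in $1_A \mathcal M$, a routine $\varepsilon/3$ approximation based on the identity $\rE_{\mathcal M_i}(\lambda(t)^* y^* w_k x \lambda(s)) = \lambda(t)^* \rE_{M_i}(y^* w_k x) \lambda(s)$ shows that the same net witnesses $A \npreceq_{\mathcal M} \mathcal M_i$, for every $i$. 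As $\mathcal B \subset \mathcal M_i$ with expectation, monotonicity of $\preceq$ under enlarging the target (Theorem~\ref{theorem-intertwining}(3)) forces $A \npreceq_{\mathcal M} \mathcal B$ as well.

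I then produce a finite-trace corner. Because $A$ is finite with expectation, I choose a faithful state $\psi \in M_\ast$ with $A \subset (1_A M 1_A)^\psi$; then $1_A \in M^\psi$ and the covariance relation gives $\lambda_\psi(\R) \subset A' \cap \core_\psi(M)$, so $\rL_\psi(\R)$ supplies finite $\Tr_\psi$-trace projections commuting with $A$. Transporting through the canonical trace-preserving isomorphism $\Pi_{\widetilde\varphi, \psi} : \core_\psi(M) \to \mathcal M$, which fixes $M$, I obtain a finite-trace projection $e \in A' \cap 1_A \mathcal M 1_A$ with $e \uparrow 1_A$. Now suppose, for contradiction, that $Q$ is not amenable; cutting by a central projection of $Q$ I may assume $Q$ has no amenable direct summand. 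Since $\rE_M : \mathcal M \to M$ is $A$-bimodular, it restricts to a faithful normal expectation of $A' \cap 1_A \mathcal M 1_A$ onto $Q$, so a standard argument transfers the no-amenable-summand property upward; choosing $e$ with full central support in $A' \cap 1_A \mathcal M 1_A$, the finite corner $\mathcal Q := A' \cap e\mathcal M e = e(A' \cap 1_A \mathcal M 1_A)e$ has no amenable direct summand.

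Finally I invoke the semifinite theorem. We have $\mathcal Q \subset e\mathcal M e$ with $e$ of finite trace and $\mathcal Q' \cap e\mathcal M e \supset Ae$; since $Ae$ is a corner of $A$ and $A \npreceq_{\mathcal M} \mathcal B$, Remark~\ref{remark-intertwining}(2) gives $Ae \npreceq_{\mathcal M} \mathcal B$, and enlarging to the superalgebra yields $\mathcal Q' \cap e\mathcal M e \npreceq_{\mathcal M} \mathcal B$. Theorem~\ref{thm-spectral-gap-semifinite-AFP} then produces $i \in I$ with $\mathcal Q \preceq_{\mathcal M} \mathcal M_i$; as $\mathcal Q \npreceq_{\mathcal M} \mathcal B$, \cite[Proposition 2.7]{BHR12} propagates this to $\mathcal P := \mathcal Q \vee (\mathcal Q' \cap e\mathcal M e) \preceq_{\mathcal M} \mathcal M_i$, whence $Ae \subset \mathcal P$ forces $Ae \preceq_{\mathcal M} \mathcal M_i$ and then $A \preceq_{\mathcal M} \mathcal M_i$ by Remark~\ref{remark-intertwining}(2) — contradicting the second paragraph. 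Hence $Q$ is amenable, and the dichotomy follows. I expect the main obstacle to be exactly the finite-trace realization: one cannot with a single state simultaneously keep each $M_i$ modular-invariant (needed for the semifinite free-product structure of the core) and centralize $A$ (needed to place its relative commutant in a finite-trace corner), and the tension is resolved only through the state-independence of the core via $\Pi_{\widetilde\varphi,\psi}$; verifying that nonamenability of $Q$ survives the passage to $\mathcal Q$ is the other delicate point.
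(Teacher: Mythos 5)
Your overall architecture is the same as the paper's: pass to the continuous core, exploit the state-independence isomorphism $\Pi_{\varphi\circ\rE,\,\psi}$ to bridge a state $\psi$ that centralizes $A$ (source of finite-trace projections commuting with $A$) with the state $\varphi\circ\rE$ (source of the semifinite AFP structure of the core), and feed the result into Theorem \ref{thm-spectral-gap-semifinite-AFP}. The genuine gap is in your second paragraph. You claim that the net $(w_k)$ in $\mathcal U(A)$ witnessing $A \npreceq_M M_i$ also witnesses $A \npreceq_{\mathcal M} \mathcal M_i$ by a ``routine $\varepsilon/3$ approximation'' over $\spn(1_AM\,\lambda_{\varphi\circ\rE}(\R))$. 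Criterion (3) of Theorem \ref{theorem-intertwining} requires $\rE_{\mathcal M_i}(y^*w_kx)\to 0$ for \emph{all} $x,y \in 1_A\mathcal M$, and when you approximate $y$ the error term has the shape $\|(y-y_m)^*w_k\xi\|$, which must be made small \emph{uniformly in $k$}. No such control exists in your setting: $1_A$ has infinite trace in $\mathcal M$, so elements of $1_A\mathcal M$ are not square-integrable for $\Tr$ and the tracial Cauchy--Schwarz estimates are unavailable, while the unitaries $w_k$ lie in the centralizer of no state compatible with the $\varphi\circ\rE$-picture --- this is exactly the ``tension'' you yourself identify, but your transfer step ignores it. It is precisely why the paper's Lemma \ref{lemma-control-sequence} needs the hypothesis $u_j \in (M_1)^\psi$ and the modular-conjugation trick, and why the paper does not transfer intertwining to the core by a density argument at all, but instead quotes \cite[Lemma 2.4]{HU15}, a statement formulated for the finite-trace cut-downs $\Pi_{\varphi,\psi}(\pi_\psi(A)q)$.

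The repair uses an ingredient you already have, but in the wrong order: do the transfer \emph{after} cutting by the finite-trace projection $e$ of your third paragraph, i.e.\ prove $A \npreceq_M M_i \Rightarrow Ae \npreceq_{\mathcal M}\mathcal M_i$ using the net $(w_ke)\subset \mathcal U(Ae)$. Then criterion (3) only involves $x,y\in e\mathcal M$, and all such elements lie in $\rL^2(\mathcal M,\Tr)$ since $\Tr(yy^*)\le \|y\|_\infty^2\,\Tr(e)<\infty$; both approximation errors become uniform in $k$ via $\|ab\|_{2,\Tr}\le \|a\|_\infty\|b\|_{2,\Tr}$ and $\|ab\|_{2,\Tr}\le\|a\|_{2,\Tr}\|b\|_\infty$, and the terms over the dense subalgebra reduce, by $\mathcal M_i$-bimodularity of $\rE_{\mathcal M_i}$, to $\lambda(g)\rE_{M_i}(b^*w_ka)\lambda(f)\to 0$. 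This is in effect a proof of one direction of \cite[Lemma 2.4]{HU15}, the paper's actual tool; with it, your contradiction scheme closes by contradicting $Ae\npreceq_{\mathcal M}\mathcal M_i$ instead of $A\npreceq_{\mathcal M}\mathcal M_i$. Two minor further points: your ``standard argument'' that no-amenable-direct-summand passes upward from $Q=A'\cap 1_AM1_A$ to $A'\cap 1_A\mathcal M1_A$ through $\rE_M$ is true but not a one-liner (one needs the support of $\rE_M(z')$ and a ucp retraction; the paper avoids this by working with the \emph{core of the relative commutant} and citing \cite[Proposition 2.8]{BHR12}), and the full-central-support condition on $e$ is unnecessary, since any corner of an algebra with no amenable direct summand again has no amenable direct summand.
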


\begin{proof}
Put $\widetilde A = A \oplus \C(1_M - 1_A)$ and denote by $\rE_{\widetilde A} : M \to \widetilde A$ a faithful normal conditional expectation. Choose a faithful trace $\tau_{\widetilde A} \in \widetilde A_\ast$ and put $\psi = \tau_{\widetilde A} \circ \rE_{\widetilde A}$. Observe that $1_A \in M^\psi$ and the von Neumann subalgebras $A$ and $A' \cap 1_AM1_A$ are globally invariant under the modular automorphism group $\sigma^{\psi_{A}}$ of $\psi_{A} := \frac{\psi(1_A \, \cdot \, 1_A)}{\psi(1_A)}$. 

Assume that $A' \cap 1_AM1_A$ is not amenable. Observe that if $A \preceq_M B$, we are done. Hence we may further assume that $A \npreceq_M B$. Choose a nonzero central projection $z \in \mathcal Z(A' \cap 1_AM1_A)$ such that $(A' \cap 1_AM1_A)z$ has no amenable direct summand. Observe that $z \in M^\psi$ and the von Neumann subalgebras $Az$ and $(A' \cap 1_A M 1_A)z$ are globally invariant under the modular automorphism group $\sigma^{\psi_z}$ of $\psi_z := \frac{\psi(z \, \cdot \, z)}{\psi(z)}$. Then $\core_{\psi_z}((A' \cap 1_AM1_A)z)$ has no amenable direct summand by \cite[Proposition 2.8]{BHR12}. 

Fix a faithful state $\varphi \in B_\ast$ and put $\mathcal B := \core_\varphi(B)$, $\mathcal M := \core_{\varphi \circ \rE}(M)$ and $\mathcal M_i := \core_{\varphi \circ \rE_i}(M_i)$ for every $i \in I$. Let $q \in \rL_\psi(\R)$ be any nonzero finite trace projection and put $p := \Pi_{\varphi, \psi}(q)$ and $\mathcal Q := \Pi_{\varphi, \psi}(q \core_{\psi_z}((A' \cap 1_AM1_A)z) q)$. Then $\mathcal Q \subset p \mathcal M p$ has no amenable direct summand. Since $A \npreceq_M B$, we have $Az \npreceq_M B$ by \cite[Remark 4.2 (2)]{HI15}. By \cite[Lemma 2.4]{HU15} we obtain $\Pi_{\varphi, \psi}(\pi_{\psi}(Az) q) \npreceq_{\mathcal M} \mathcal B$. Since $\Pi_{\varphi, \psi}(\pi_{\psi}(Az) q) \subset \mathcal Q' \cap p \mathcal M p$, we conclude that $\mathcal Q' \cap p \mathcal M p \npreceq_{\mathcal M} \mathcal B$. 

By Theorem \ref{thm-spectral-gap-semifinite-AFP}, there exists $i \in I$ such that $\mathcal Q \preceq_{\mathcal M} \mathcal M_i$. Since $\mathcal Q \npreceq_{\mathcal M} \mathcal B$ (recall that $c_{\psi_z}((A' \cap 1_A M 1_A)z)$ has no amenable direct summand), we also have $\mathcal Q' \cap p \mathcal M p \preceq_{\mathcal M} \mathcal M_i$ by \cite[Proposition 2.7]{BHR12} and hence $\Pi_{\varphi, \psi}(\pi_{\psi}(Az) q) \preceq_{\mathcal M} \mathcal M_i$. By \cite[Lemma 2.4]{HU15}, this implies that $Az \preceq_M M_i$ and hence $A \preceq_M M_i$ by \cite[Remark 4.2 (2)]{HI15}.
\end{proof}

\subsection*{Normalizers inside AFP von Neumann algebras}

We begin by studying normalizers inside {\em semifinite} amalgamated free product von Neumann algebras. For a technical reason, we only deal with amalgamated free products of type ${\rm II_\infty}$ factors. This result will be sufficient for our purposes. 

\begin{thm}\label{thm-normalizer-semifinite-AFP}
Keep Notation \ref{notation-semifinite-AFP}. Assume moreover that $\mathcal B$ is a diffuse subalgebra and $\mathcal M_{\mathcal G}$ is a type ${\rm II_\infty}$ factor for every nonempty subset $\mathcal G \subset I$. Let $p \in \mathcal M$ be any nonzero finite trace projection and $\mathcal A \subset p \mathcal M p$ any amenable von Neumann subalgebra such that $\mathcal A  \npreceq_{\mathcal M} \mathcal B$. Put $\mathcal Q := \mathcal N_{p \mathcal M p}(\mathcal A)\dpr$ and assume that $\mathcal Q$ has no amenable direct summand. Then there exists $i \in I$ such that $\mathcal Q \preceq_{\mathcal M} \mathcal M_i$.
\end{thm}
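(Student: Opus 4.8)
The plan is to run the same scheme as in the proof of Theorem \ref{thm-spectral-gap-semifinite-AFP}, replacing the relative commutant input \cite[Corollary 4.2]{HU15} (which rested on Popa's spectral gap) by the semifinite, type ${\rm II_\infty}$ reconstruction of Vaes's normalizer dichotomy \cite{Io12, PV11, Va13} carried out in Appendix \ref{appendix}. The key black box I would use is this: for the two-component regrouping $\mathcal M = \mathcal M_{\mathcal F} \ast_{\mathcal B} \mathcal M_{\mathcal F^c}$ attached to a nonempty finite $\mathcal F \subset I$, since $\mathcal A$ is amenable with $\mathcal A \npreceq_{\mathcal M} \mathcal B$, for every nonzero projection $z \in \mathcal Q' \cap p\mathcal M p$ one has $\mathcal Q z \preceq_{\mathcal M} \mathcal M_{\mathcal F}$ or $\mathcal Q z \preceq_{\mathcal M} \mathcal M_{\mathcal F^c}$. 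All standing hypotheses ($\mathcal B$ amenable and diffuse, $\mathcal M_{\mathcal G}$ a type ${\rm II_\infty}$ factor for every nonempty $\mathcal G$) are stable under such regroupings and under amplification $\mathbf M_n(\cdot)$ over $\mathbf M_n(\mathcal B)$, so the dichotomy stays available at each stage. I would also record two elementary facts used repeatedly: intertwining is monotone under unital subalgebras (so $\mathcal A \subset \mathcal Q$ and $\mathcal A \npreceq_{\mathcal M} \mathcal B$ force $\mathcal Q \npreceq_{\mathcal M} \mathcal B$), and a normal $\ast$-homomorphism maps $\mathcal Q$ onto an algebra isomorphic to a direct summand of $\mathcal Q$, so ``no amenable direct summand'' is preserved by any such $\pi$.

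First I would reduce to a finite index set, exactly as in the first paragraph of Theorem \ref{thm-spectral-gap-semifinite-AFP}. Applying the dichotomy for each finite $\mathcal F$ and invoking Proposition \ref{proposition-infinite-AFP}, whose contrapositive rules out the possibility that $\mathcal Q z \preceq_{\mathcal M} \mathcal M_{\mathcal F^c}$ for all $z$ and all $\mathcal F$ (as $\mathcal Q$ is non-amenable), I obtain a nonzero $z_0 \in \mathcal Q' \cap p\mathcal M p$ and a finite $\mathcal F_0$ with $\mathcal Q z_0 \preceq_{\mathcal M} \mathcal M_{\mathcal F_0}$, hence $\mathcal Q \preceq_{\mathcal M} \mathcal M_{\mathcal F_0}$. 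Choosing $w, \pi, q$ with $aw = w\pi(a)$ for $a \in \mathcal Q$ and, using $\mathcal Q \npreceq_{\mathcal M} \mathcal B$, Remark \ref{remark-intertwining}(2) together with \cite[Theorem 2.5]{BHR12}, arranging $q = w^*w \in \mathbf M_n(\mathcal M_{\mathcal F_0})$, I transport the data to $\tilde{\mathcal A} := \pi(\mathcal A)$ and $\tilde{\mathcal Q} := \pi(\mathcal Q) \subset q\mathbf M_n(\mathcal M_{\mathcal F_0})q$. This reduces the whole statement to a finite index set.

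For finite $I$ I would induct on $|I|$, the case $|I| = 1$ being trivial. In the inductive step, write $\mathcal M = \mathcal M_{\mathcal F} \ast_{\mathcal B} \mathcal M_{i_0}$ by splitting off one factor and apply the dichotomy with $z = p$ to get either $\mathcal Q \preceq_{\mathcal M} \mathcal M_{i_0}$ (and then we are done) or $\mathcal Q \preceq_{\mathcal M} \mathcal M_{\mathcal F}$. In the latter case I transport as above to $\tilde{\mathcal A} = \pi(\mathcal A)$ and $\tilde{\mathcal Q} = \pi(\mathcal Q) \subset q\mathbf M_n(\mathcal M_{\mathcal F})q$ with $q = w^*w$: here $\tilde{\mathcal A}$ is amenable, $\tilde{\mathcal A} \npreceq_{\mathbf M_n(\mathcal M_{\mathcal F})} \mathbf M_n(\mathcal B)$ (otherwise Remark \ref{remark-intertwining}(2) would give $\mathcal A \preceq_{\mathcal M} \mathcal B$), and $\tilde{\mathcal Q}$ has no amenable direct summand by the homomorphism fact. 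The crucial point, and the one genuinely new feature compared with the relative commutant case, is that to apply the induction hypothesis I need the \emph{full} normalizer $\tilde{\mathcal N} := \mathcal N_{q\mathbf M_n(\mathcal M_{\mathcal F})q}(\tilde{\mathcal A})\dpr$, not merely $\tilde{\mathcal Q}$, to have no amenable direct summand. I would establish this by a central-support argument: if $z' \in \mathcal Z(\tilde{\mathcal N})$ has $\tilde{\mathcal N}z'$ amenable, then $z' \in \tilde{\mathcal Q}' \cap q\mathbf M_n(\mathcal M_{\mathcal F})q$ and $\tilde{\mathcal Q}z'$ is an expected subalgebra of the amenable $\tilde{\mathcal N}z'$, hence amenable; cutting by the central support $z_0 := z_{\tilde{\mathcal Q}}(z') \in \mathcal Z(\tilde{\mathcal Q})$ yields an amenable direct summand $\tilde{\mathcal Q}z_0 \cong \tilde{\mathcal Q}z'$ of $\tilde{\mathcal Q}$, forcing $z_0 = 0$ and so $z' = 0$. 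The induction hypothesis then produces $i \in \mathcal F$ with $\tilde{\mathcal N} \preceq_{\mathbf M_n(\mathcal M_{\mathcal F})} \mathbf M_n(\mathcal M_i)$; since $\tilde{\mathcal Q} \subset \tilde{\mathcal N}$ is unital, $\tilde{\mathcal Q} \preceq_{\mathbf M_n(\mathcal M_{\mathcal F})} \mathbf M_n(\mathcal M_i)$, and transporting back through $w$ via Remark \ref{remark-intertwining}(2) gives $\mathcal Q \preceq_{\mathcal M} \mathcal M_i$.

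The main obstacle is not in this scaffolding but in the dichotomy itself: the honest analytic work is the type ${\rm II_\infty}$ adaptation of Vaes's normalizer theorem in Appendix \ref{appendix}, where the relative amenability and malleable deformation machinery of \cite{Io12, PV11, Va13} must be reworked away from the finite tracial setting; this is exactly why diffuseness of $\mathcal B$ and factoriality and the type ${\rm II_\infty}$ assumption on the $\mathcal M_{\mathcal G}$ are imposed. Within the present proof, the only delicate step is the transport of ``no amenable direct summand'' from $\tilde{\mathcal Q}$ to the full normalizer $\tilde{\mathcal N}$, which the central-support argument above resolves.
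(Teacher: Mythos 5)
Your scaffolding — reduction to a finite index set via Proposition \ref{proposition-infinite-AFP}, induction on $|I|$, transport through the intertwining partial isometry using \cite[Theorem 2.5]{BHR12} and Remark \ref{remark-intertwining} (2) — is sound and matches the paper's. The genuine gap sits exactly at the point you declare a black box. Theorem \ref{theorem-appendix-AFP} is \emph{not} stated for arbitrary finite-trace corners of $\mathcal M$: throughout Appendix \ref{appendix} the corner projection must satisfy $p \in q\mathcal M q$ for a finite-trace projection $q \in \mathcal B$, and this constraint is essential to the method there, since the crossed-product picture $q\widetilde{\mathcal M}q = q\mathcal N q\rtimes \mathbf{F}_2$ fed into Popa--Vaes \cite{PV11} uses $[\lambda_\gamma, q] = 0$, which holds precisely because $q \in \mathcal B \subset \mathcal N$. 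The projections $z \in \mathcal Q' \cap p\mathcal M p$ you feed into your dichotomy have no reason to lie under any projection of $\mathcal B$, so Theorem \ref{theorem-appendix-AFP} simply does not apply to $\mathcal A z \subset z\mathcal M z$ as is. The paper bridges this by an idea you are missing: since $\mathcal M$ is a type ${\rm II_\infty}$ factor and $\mathcal B$ is diffuse with trace-preserving expectation, the finite-trace projection $z$ is equivalent to a projection of $\mathcal B$, so one can pick $u \in \mathcal U(\mathcal M)$ with $uzu^* \in \mathcal B$; only then is Theorem \ref{theorem-appendix-AFP} applied to $u\mathcal A z u^* \subset uzu^*\mathcal M uzu^*$, the relative-amenability branch being excluded by amenability of $\mathcal B$ and nonamenability of $u\mathcal Q z u^*$, and the conclusion being pulled from the full normalizer of $u\mathcal A z u^*$ down to $u\mathcal Q z u^*$ via regularity of $u\mathcal A z u^* \subset u \mathcal Q z u^*$. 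Your closing sentence shows this is not a mere omission: you assert that diffuseness of $\mathcal B$ and the type ${\rm II_\infty}$ factoriality of the $\mathcal M_{\mathcal G}$ are needed for the appendix's reworking of Vaes's theorem, but the appendix assumes neither; these hypotheses are imposed in Theorem \ref{thm-normalizer-semifinite-AFP} precisely to enable the conjugation step above, i.e.\ to bring arbitrary corners within the appendix's reach. Without that step your ``key black box'' is not available.

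The part of your argument that genuinely differs from the paper is, by contrast, correct. The paper never passes to the full normalizer in the transported picture: it records that $w^*\mathcal A w \subset w^*\mathcal Q w$ is again a \emph{regular} inclusion (because $ww^* \in \mathcal Q' \cap p\mathcal M p$) and implicitly runs the induction on the corresponding more flexible statement. Your alternative — showing that $\tilde{\mathcal N} := \mathcal N_{q\mathbf M_n(\mathcal M_{\mathcal F})q}(\tilde{\mathcal A})\dpr$ has no amenable direct summand because any central projection $z'$ with $\tilde{\mathcal N}z'$ amenable yields, via the induction map $x \mapsto xz'$ and its kernel, an amenable direct summand of $\tilde{\mathcal Q}$ — is valid, and combined with downward monotonicity of $\preceq$ under unital subalgebras (legitimate here, as all algebras involved are finite with expectation, by condition (3) of Theorem \ref{theorem-intertwining}) it lets you induct on the theorem exactly as stated. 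This is a reasonable piece of bookkeeping, but it repairs only the induction, not the missing conjugation-into-$\mathcal B$ idea, which is the real defect.
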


\begin{proof}
For each nonempty finite subset $\mathcal F \subset I$, regard $\mathcal M = \mathcal M_{\mathcal F} \ast_{\mathcal B} \mathcal M_{\mathcal F^c}$. Let $z \in \mathcal Q' \cap p \mathcal M p$ be any nonzero projection. Since $\mathcal M$ is a type ${\rm II_\infty}$ factor and since $\mathcal B \subset \mathcal M$ is a diffuse subalgebra with trace preserving conditional expectation, there exists $u \in \mathcal U(\mathcal M)$ such that $uzu^* \in \mathcal B$. Since the unital inclusion $u \mathcal A z u^*\subset u\mathcal Q z u^*$ is regular  and since $u \mathcal A z u^* \npreceq_{\mathcal M} \mathcal B$ by assumption (and Remark \ref{remark-intertwining} (2)), we have $u\mathcal Q zu^* \preceq_{\mathcal M} \mathcal M_{\mathcal F}$ or $u\mathcal Q zu^* \preceq_{\mathcal M} \mathcal M_{\mathcal F^c}$ by Theorem~\ref{theorem-appendix-AFP} (together with the comment following it) and \cite[Proposition 2.7]{BHR12}. Accordingly, we have $\mathcal Q z \preceq_{\mathcal M} \mathcal M_{\mathcal F}$ or $\mathcal Q z \preceq_{\mathcal M} \mathcal M_{\mathcal F^c}$ (by Remark \ref{remark-intertwining} (2)). Since $\mathcal Q$ has no amenable direct summand, Proposition \ref{proposition-infinite-AFP} ensures that there exist a nonzero projection $z \in \mathcal Q' \cap p \mathcal M p$ and a nonempty finite subset $\mathcal F$ such that $\mathcal Q z \preceq_{\mathcal M} \mathcal M_{\mathcal F}$. Therefore, we have that $\mathcal Q \preceq_{\mathcal M} \mathcal M_{\mathcal F}$.

Then there exist $n \geq 1$, a finite trace projection $q \in \mathbf M_n(\mathcal M_{\mathcal F})$, a nonzero partial isometry $w \in \mathbf M_{1, n}(p\mathcal M)q$ and a unital normal $\ast$-homomorphism $\pi : \mathcal Q \to q\mathbf M_n(\mathcal M_{\mathcal F})q$ such that $a w = w \pi(a)$ for all $a \in  \mathcal Q$. Observe that $ww^* \in \mathcal Q' \cap p \mathcal M p$ and $w^*w \in \pi(\mathcal Q)' \cap q\mathbf M_n(\mathcal M)q$. Since $\mathcal Q$ has no amenable direct summand, we have $\pi( \mathcal Q) \npreceq_{\mathbf M_n(\mathcal M_{\mathcal F})} \mathbf M_n(\mathcal B)$. Then \cite[Theorem 2.5]{BHR12} implies that $w^*w \in q\mathbf M_n(\mathcal M_{\mathcal F})q$ and hence we may assume that $q = w^*w$. We obtain $w^* \mathcal Q w \subset q\mathbf M_n(\mathcal M_{\mathcal F})q$.

Since $ww^* \in \mathcal Q' \cap p \mathcal M p$, it follows that the unital inclusion $w^* \mathcal A w \subset w^* \mathcal Q w$ is regular, $w^* \mathcal Q w$ has no amenable direct summand, and $w^* \mathcal A w \npreceq_{\mathbf M_n(\mathcal M_{\mathcal F})} \mathbf M_n(\mathcal B)$ (by Remark \ref{remark-intertwining} (2) and $\mathcal A \npreceq_{\mathcal M} \mathcal B$). By Remark \ref{remark-intertwining} (2), $w^* \mathcal Q w \preceq_{\mathbf M_n(\mathcal M_{\mathcal F})} \mathbf M_n(\mathcal M_i)$ implies that $\mathcal Q \preceq_{\mathcal M} \mathcal M_i$  for every $i \in \mathcal F$. Therefore, since $\mathbf M_n(\mathcal M_{\mathcal G})$ is a type ${\rm II_\infty}$ factor for every nonempty subset $\mathcal G \subset \mathcal F$ and since $\mathbf M_n(\mathcal B)$ is diffuse, we have showed that in order to prove Theorem \ref{thm-spectral-gap-semifinite-AFP}, we may assume that the index set $I$ is finite.

When the index set $I$ is finite, a straightforward induction procedure over $k := |I|$ using a combination of the above reasoning with the assumptions that $\mathcal M_{\mathcal G}$ is a type ${\rm II_\infty}$ factor for every nonempty subset $\mathcal G \subset I$ and $\mathcal B$ is diffuse and with Theorem \ref{theorem-appendix-AFP} proves the result (see the last paragraph of the proof of Theorem \ref{thm-spectral-gap-semifinite-AFP}). 
\end{proof}

We now prove a result locating certain finite subalgebras with expectation and with nonamenable normalizer inside {\em arbitrary}  free products of $\sigma$-finite factors. This result will be used in the proof of the Main Theorem (Case (iii)).

\begin{thm}\label{thm-normalizer-general-AFP}
Let $I$ be any nonempty set and $(M_i, \varphi_i)_{i \in I}$ any family of $\sigma$-finite factors endowed with any faithful normal states. Denote by $(M, \varphi) = \ast_{i \in I} (M_i, \varphi_i)$ the corresponding free product.

Let $1_A \in M$ be any nonzero projection and $A \subset 1_AM1_A$ be any amenable finite von Neumann subalgebra with expectation such that $A' \cap 1_AM1_A = \mathcal Z(A)$. Then at least one of the following conditions holds true:
\begin{itemize}
\item There exists $i \in I$ such that $A \preceq_{M} M_i$.
\item The von Neumann subalgebra $\mathcal N_{1_AM1_A}(A)\dpr$ is amenable.
\end{itemize}
\end{thm}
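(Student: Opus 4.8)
The plan is to reduce the general (possibly type ${\rm III}$) free product to the semifinite setting of Theorem \ref{thm-normalizer-semifinite-AFP}, exactly as was done in the proof of Theorem \ref{thm-spectral-gap-general-AFP}, but now carrying along the normalizer instead of the relative commutant. First I would set up an auxiliary state that makes $A$ sit nicely inside the modular structure. Put $\widetilde A = A \oplus \C(1_M - 1_A)$, choose a faithful normal conditional expectation $\rE_{\widetilde A} : M \to \widetilde A$, pick a faithful trace $\tau_{\widetilde A}$ on $\widetilde A$, and set $\psi = \tau_{\widetilde A} \circ \rE_{\widetilde A}$. Then $1_A \in M^\psi$ and $A$ is globally $\sigma^{\psi_A}$-invariant, so the core construction applies cleanly. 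Assuming $\mathcal N_{1_A M 1_A}(A)\dpr$ is not amenable and (for contradiction with the conclusion) that $A \npreceq_M M_i$ for all $i$, I would cut by a central projection $z \in \mathcal Z(\mathcal N_{1_AM1_A}(A)\dpr)$ so that $\mathcal N_{1_AM1_A}(A)\dpr z$ has no amenable direct summand; note $z$ lies in the centralizer and is fixed by the relevant modular group.

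The key technical move is to pass to the continuous core. Fix a faithful state $\varphi \in B_\ast$ (here one first embeds $\C \subset M_i$ with $B = \C$, so $\mathcal B = \rL_\varphi(\R)$ is abelian and diffuse, and each $\mathcal M_i = \core(M_i)$ is a type ${\rm II_\infty}$ factor because $M_i$ is a nonamenable—hence in particular diffuse—factor), and set $\mathcal M = \core(M)$, $\mathcal M_i = \core(M_i)$. Cutting by a finite trace projection $q \in \rL_\psi(\R)$ and transporting via the canonical isomorphism $\Pi_{\varphi,\psi}$, I obtain $\mathcal A := \Pi_{\varphi,\psi}(\pi_\psi(Az)q)$ amenable inside $p\mathcal M p$ with $p = \Pi_{\varphi,\psi}(q)$. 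The crucial point is to verify that the normalizer is preserved under this reduction: since $Az$ is regular inside $\mathcal N_{1_AM1_A}(A)\dpr z$ and, using the hypothesis $A' \cap 1_AM1_A = \mathcal Z(A)$ together with Proposition \ref{prop-corner-normalizer}, the normalizer of $\mathcal A$ inside the core corner contains (a corner of) $\core(\mathcal N_{1_AM1_A}(A)\dpr z)$, which has no amenable direct summand by \cite[Proposition 2.8]{BHR12}. I would also check $\mathcal A \npreceq_{\mathcal M} \mathcal B$: this follows from $A \npreceq_M B = \C$ via \cite[Lemma 2.4]{HU15} and \cite[Remark 4.2 (2)]{HI15}, since $A$ being diffuse already guarantees $A \npreceq_M \C$.

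Having arranged that $\mathcal A \subset p\mathcal M p$ is amenable with $\mathcal A \npreceq_{\mathcal M} \mathcal B$ and $\mathcal Q := \mathcal N_{p\mathcal M p}(\mathcal A)\dpr$ has no amenable direct summand, Theorem \ref{thm-normalizer-semifinite-AFP} yields $i \in I$ with $\mathcal Q \preceq_{\mathcal M} \mathcal M_i$. Since $\mathcal A \subset \mathcal Q$ and $\mathcal A \npreceq_{\mathcal M} \mathcal B$, the heredity statement \cite[Proposition 2.7]{BHR12} upgrades this to $\mathcal A \preceq_{\mathcal M} \mathcal M_i$, equivalently $\Pi_{\varphi,\psi}(\pi_\psi(Az)q) \preceq_{\mathcal M} \mathcal M_i$. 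Finally, descending from the core back to $M$ via \cite[Lemma 2.4]{HU15} gives $Az \preceq_M M_i$, and then $A \preceq_M M_i$ by \cite[Remark 4.2 (2)]{HI15}, the desired contradiction. I expect the main obstacle to be the bookkeeping in the second paragraph: one must argue carefully that the hypothesis $A' \cap 1_AM1_A = \mathcal Z(A)$ genuinely transfers regularity of $Az$ into a nonamenable normalizer of $\mathcal A$ inside the \emph{core} corner, invoking Proposition \ref{prop-corner-normalizer} to control how normalizing unitaries behave after cutting by the central projection $z$, and ensuring the finite-trace cutdown by $q$ does not destroy the normalizer structure.
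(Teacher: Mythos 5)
There is a genuine gap, and it sits exactly at the point where you invoke Theorem \ref{thm-normalizer-semifinite-AFP}. That theorem is \emph{not} a statement about arbitrary semifinite amalgamated free products: it explicitly requires $\mathcal B$ to be diffuse \emph{and} $\mathcal M_{\mathcal G}$ to be a type ${\rm II_\infty}$ \emph{factor} for every nonempty subset $\mathcal G \subset I$, and this factoriality is genuinely used in its proof (one conjugates a projection $z \in \mathcal Q' \cap p\mathcal M p$ into $\mathcal B$ by a unitary of the ${\rm II_\infty}$ factor $\mathcal M$ before applying Theorem \ref{theorem-appendix-AFP}, whose dichotomy is only available for corners $q\mathcal Mq$ with $q \in \mathcal B$; the same factoriality is used again in the induction step). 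Your claim that $\mathcal M_i = \core(M_i)$ is a type ${\rm II_\infty}$ factor "because $M_i$ is a nonamenable --- hence in particular diffuse --- factor" is false: the continuous core of a factor is a factor if and only if the factor is of type ${\rm III_1}$. If $M_i$ is a type ${\rm II_1}$ factor, then $\core(M_i) \cong M_i \ovt \rL(\R)$ has center $\rL(\R)$; if $M_i$ is of type ${\rm III_\lambda}$ with $0 \le \lambda < 1$, the core again has nontrivial (possibly diffuse) center. The same objection applies to the subproducts $\mathcal M_{\mathcal G} = \core(M_{\mathcal G})$. So your reduction, modelled on the proof of Theorem \ref{thm-spectral-gap-general-AFP}, breaks down precisely because the relative-commutant theorem (Theorem \ref{thm-spectral-gap-semifinite-AFP}) needs no factoriality of the $\mathcal M_{\mathcal G}$, whereas the normalizer theorem does; this asymmetry is the whole reason the paper's proof of Theorem \ref{thm-normalizer-general-AFP} is "more involved".

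The paper's fix is a preliminary stabilization that your proposal is missing. One first tensors everything with the AFD type ${\rm III_1}$ factor $R_\infty$, rewriting $\widetilde M := M \ovt R_\infty$ as the amalgamated free product of the $\widetilde M_i := M_i \ovt R_\infty$ over $\widetilde B := \C 1_M \ovt R_\infty$. Then every $\widetilde M_{\mathcal G} = M_{\mathcal G} \ovt R_\infty$ is a type ${\rm III_1}$ factor (factoriality of $M_{\mathcal G}$ by \cite[Theorem 4.1]{Ue10}, type ${\rm III_1}$ by the flow-of-weights computation \cite[Corollary 6.8]{CT76}), so all the cores are type ${\rm II_\infty}$ factors and $\mathcal B = \core(\widetilde B)$ is diffuse and amenable, as Theorem \ref{thm-normalizer-semifinite-AFP} demands. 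To keep your standing hypotheses alive after this tensoring one must also replace $A$ by $D := A \ovt R$ for an irreducible type ${\rm II_1}$ subfactor $R \subset R_\infty$ with expectation: irreducibility gives $D' \cap 1_D \widetilde M 1_D = \mathcal Z(D)$ (needed both for the computation showing $\core(\mathcal N(D)\dpr) \subset \mathcal N(\core(D))\dpr$ and for Proposition \ref{prop-corner-normalizer}), while $\mathcal N_{1_A M 1_A}(A)\dpr \ovt \C 1_R$ sits inside $\mathcal N_{1_D \widetilde M 1_D}(D)\dpr$ with expectation, so nonamenability of the normalizer survives. At the end one descends via \cite[Lemma 2.4]{HU15} to $D \preceq_{\widetilde M} \widetilde M_i$ and then uses \cite[Lemma 4.6]{HI15} (intertwining is insensitive to tensoring by $R$ and $R_\infty$) to recover $A \preceq_M M_i$; your route back via \cite[Remark 4.2 (2)]{HI15} alone does not address this last descent. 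The rest of your outline (choice of $\psi$, cutting by $z$ and by a finite-trace projection $q \in \rL_\psi(\R)$, Proposition \ref{prop-corner-normalizer}, and the final application of \cite[Lemma 2.4]{HU15}) does match the paper, but without the $R_\infty$-stabilization the central step has no theorem to appeal to.
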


\begin{proof}
Denote by $R_\infty$ the unique type ${\rm III_1}$ AFD factor. Put $\widetilde B = \C1_M \ovt R_\infty$, $\widetilde M = M \ovt R_\infty$ and $\widetilde \rE = \varphi \otimes \id_{R_\infty}$, $\widetilde{M}_i = M_i \ovt R_\infty$ and $\widetilde \rE_i = \varphi_i \otimes \id_{R_\infty}$ for every $i \in I$. We may and will naturally regard the pair $(\widetilde M, \widetilde \rE)$ as 
$$(\widetilde M, \widetilde \rE) = \ast_{\widetilde B, i \in I} (\widetilde M_i, \widetilde \rE_i).$$
Observe that $\widetilde M_i = M_i \ovt R_\infty$ is a type ${\rm III_1}$ factor for every $i \in I$. (This is well-known without explicit reference and can be confirmed by computing the (smooth) flow of weights; see \cite[Corollary 6.8]{CT76}.) For every nonempty subset $\mathcal G \subset I$,  $M_{\mathcal G}$ is a factor by \cite[Theorem 4.1]{Ue10}, and hence $\widetilde M_{\mathcal G} = M_{\mathcal G} \ovt R_\infty$ is a type ${\rm III_1}$ factor by the same reasoning as above. 

Fix an irreducible type ${\rm II_1}$ subfactor $R \subset R_\infty$ with expectation (whose existence is explained in e.g.~\cite[Example 1.6]{Ha85}). Put $\widetilde A = (A \oplus \C(1_M - 1_A)) \ovt R$ and denote by $\rE_{\widetilde A} : \widetilde M \to \widetilde A$ a faithful normal conditional expectation. Choose a faithful  trace $\tau_{\widetilde A} \in \widetilde A_\ast$ and put $\psi = \tau_{\widetilde A} \circ \rE_{\widetilde A}$. We will simply denote $D := (1_A \otimes 1_R) \widetilde A (1_A \otimes 1_{R})$ and $1_{D} := 1_A \otimes 1_R$. Observe that $D' \cap 1_{D} \widetilde M 1_{D} = \mathcal Z(D)$, the unital inclusion $(\mathcal N_{1_A M 1_A}(A)\dpr \oplus \C(1_M - 1_A) )\ovt \C1_R  \subset \widetilde M$ is with expectation and also so is 
$$
 \mathcal N_{1_A M 1_A}(A)\dpr \ovt \C1_R = 1_D ((\mathcal N_{1_A M 1_A}(A)\dpr \oplus \C(1_M - 1_A)) \ovt \C1_R ) 1_D \subset \mathcal N_{1_D \widetilde M 1_D}(D)\dpr.
$$
Moreover, we have $1_D \in \widetilde M^\psi$ and the von Neumann subalgebras $D$ and $\mathcal N_{1_D\widetilde M1_D}(D)\dpr$ are globally invariant under the modular automorphism group $\sigma^{\psi_{D}}$ of $\psi_{D} := \frac{\psi(1_D \, \cdot \, 1_D)}{\psi(1_D)}$.

Observe that we have 
$$\core_{\psi_{D}}(\mathcal N_{1_D\widetilde M1_D}(D)\dpr) \subset \mathcal N_{\core_{\psi_{D}}(1_D\widetilde M1_D)}(\core_{\psi_{D}}(D))\dpr.$$ Indeed, let $u \in \mathcal N_{1_D\widetilde M1_D}(D)$ and $t \in \R$. For every $a \in D$, we have
$$u \sigma_t^{\psi_{D}}(u^*) \, a = u \, \sigma_t^{\psi_{D}}(u^* a u) \, \sigma_t^{\psi_{D}}(u)^* = u \, u^* a u \, \sigma_t^{\psi_{D}}(u)^* = a \, u \sigma_t^{\psi_{D}}(u)^*.$$
This shows that $u \sigma_t^{\psi_{D}}(u)^* \in D' \cap 1_D\widetilde M1_D = \mathcal Z(D)$ and hence we have $\pi_{\psi_{D}}(u) \lambda_{\psi_{D}}(t) \pi_{\psi_{D}}(u)^* \in \pi_{\psi_{D}}(\mathcal Z(D)) \lambda_{\psi_{D}}(t)$. Therefore we obtain that $\core_{\psi_{D}}(\mathcal N_{1_D\widetilde M1_D}(D)\dpr) \subset \mathcal N_{\core_{\psi_{D}}(1_D\widetilde M1_D)}(\core_{\psi_{D}}(D))\dpr$ and this inclusion is with trace preserving conditional expectation. 

 Assume that $\mathcal N_{1_AM1_A}(A)\dpr$ is not amenable. Observe that if $A \preceq_M B$, we are done. Hence we may further assume that $A \npreceq_M B$. Then $\mathcal N_{1_D\widetilde M1_D}(D)\dpr$ is not amenable (since it contains $\mathcal N_{1_A M 1_A}(A)\dpr \ovt \C1_R$ with expectation) and $D \npreceq_{\widetilde M} \widetilde B$ by \cite[Lemma 4.6]{HI15}. Choose a nonzero projection $z \in \mathcal Z(\mathcal N_{1_D\widetilde M1_D}(D)\dpr)$ such that $\mathcal N_{1_D\widetilde M1_D}(D)\dpr z$ has no amenable direct summand. Observe that $z \in \widetilde M^\psi$, $Dz \subset z \widetilde M^\psi z$ and $\mathcal N_{z \widetilde Mz}(Dz)\dpr = \mathcal N_{1_D \widetilde M1_D}(D)\dpr z$ is globally invariant under the modular automorphism group $\sigma^{\psi_z}$ of $\psi_z = \frac{\psi(z \, \cdot \, z)}{\psi(z)}$. Then $\core_{\psi_z}(\mathcal N_{z\widetilde Mz}(Dz)\dpr)$ has no amenable direct summand by \cite[Proposition 2.8]{BHR12}. 

Fix a faithful state $\chi \in (R_\infty)_\ast$ and put $\mathcal B := \core_\chi(\widetilde B)$, $\mathcal M := \core_{\varphi \otimes \chi}(\widetilde M)$ and $\mathcal M_i := \core_{\varphi_i \otimes \chi}(\widetilde M_i)$ for every $i \in I$. Observe that $\mathcal B \subset \mathcal M$ is a diffuse subalgebra with trace preserving conditional expectation and $\mathcal M_{\mathcal G}$ is a type ${\rm II_\infty}$ factor for every nonempty subset $\mathcal G \subset I$. Let $q \in \rL_\psi(\R)$ be any nonzero finite trace projection. Put $p := \Pi_{\varphi \otimes \chi, \psi}(q) \in \mathcal M$, $\mathcal A := \Pi_{\varphi  \otimes \chi, \psi}( \core_{\psi_z}(Dz) q)$ and $\mathcal Q := \Pi_{\varphi  \otimes \chi, \psi}(q \core_{\psi_z}(\mathcal N_{z\widetilde Mz}(Dz)\dpr) q)$.
Since $Dz \subset z \widetilde M^\psi z$ and $\rL(\R)$ is a MASA in $\mathbf B(\rL^2(\R))$, we have $\core_{\psi_z}(D z)' \cap \core_{\psi_z}(z\widetilde Mz) = \mathcal Z(\core_{\psi_z}(D z))$ and hence
$$ q\big(\mathcal N_{\core_{\psi_z}(z\widetilde Mz)}(\core_{\psi_z}(D z))\dpr \big)q = \mathcal N_{q\core_{\psi_z}(z\widetilde Mz)q}(\core_{\psi_z}(Dz) q)\dpr$$
by Proposition \ref{prop-corner-normalizer}. Then we have $\mathcal Q = \mathcal N_{p \mathcal M p}(\mathcal A)\dpr$, $\mathcal Q$ has no amenable direct summand, and $\mathcal A \npreceq_{\mathcal M} \mathcal B$ by \cite[Lemma 2.4]{HU15} since $Dz \npreceq_{\widetilde M} \widetilde B$. By Theorem \ref{thm-normalizer-semifinite-AFP}, there exists $i \in I$ such that $\mathcal A \preceq_{\mathcal M} \mathcal M_i$. Then \cite[Lemma 2.4]{HU15} shows that $Dz \preceq_{\widetilde M} \widetilde M_i$ and hence $D \preceq_{\widetilde M} \widetilde M_i$. Finally, \cite[Lemma 4.6]{HI15} guarantees $A \preceq_M M_i$.
\end{proof}

We point out that when dealing with {\em tracial} free product von Neumann algebras, Theorem \ref{thm-normalizer-general-AFP} holds true for any family $(M_i, \tau_i)_{i \in I}$ of tracial von Neumann algebras and any amenable von  Neumann subalgebra $A \subset 1_A M 1_A$.

\subsection*{Relative property (T) subalgebras inside AFP von Neumann algebras} Recall from \cite[Definition 4.2.1]{Po01} that an inclusion of tracial von Neumann algebras $A \subset (N, \tau)$ is said to have {\em relative property (T)} if for every net $(\Phi_i : N \to N)_{i \in I}$ of subtracial subunital completely positive maps such that $\lim_i \|\Phi_i (x) - x\|_2 = 0$ for all $x \in N$, we have
$$\lim_{i} \sup_{y \in \Ball(A)} \|\Phi_i(y) - y\|_2 = 0.$$

We begin by locating relative property (T) subalgebras inside {\em semifinite} amalgamated free product von Neumann algebras. This is a semifinite analogue of \cite[Theorem 4.3]{IPP05}.

\begin{thm}\label{thm-relative-property-T-semifinite-AFP}
Keep Notation \ref{notation-semifinite-AFP}. Let $p \in \mathcal M$ be any nonzero finite trace projection and $\mathcal A \subset p\mathcal M p$ any von Neumann subalgebra with relative property (T). Then there exists $i \in I$ such that $\mathcal A \preceq_{\mathcal M} \mathcal M_i$.
\end{thm}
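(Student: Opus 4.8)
The plan is to transpose the deformation/rigidity argument of \cite[Theorem 4.3]{IPP05} to the semifinite framework of Notation \ref{notation-semifinite-AFP}; the only genuinely new point is that $\Tr$ is infinite, so the whole analysis must be carried out inside the finite-trace corner $p\mathcal M p$.

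\emph{Reduction to a finite index set.} First I would show that $\mathcal A\preceq_{\mathcal M}\mathcal M_{\mathcal F_0}$ for some nonempty finite $\mathcal F_0\subset I$. For each nonempty finite $\mathcal F\subset I$ write $\mathcal M=\mathcal M_{\mathcal F}\ast_{\mathcal B}\mathcal M_{\mathcal F^c}$ and let $\rE_{\mathcal M_{\mathcal F}}\colon\mathcal M\to\mathcal M_{\mathcal F}$ be the trace-preserving conditional expectation. Then the maps $\Phi_{\mathcal F}\colon p\mathcal M p\to p\mathcal M p$ given by $\Phi_{\mathcal F}(x)=p\,\rE_{\mathcal M_{\mathcal F}}(x)\,p$ form a net of normal subunital subtracial completely positive maps with $\lim_{\mathcal F}\|\Phi_{\mathcal F}(x)-x\|_2=0$ for all $x\in p\mathcal M p$, since $\rE_{\mathcal M_{\mathcal F}}\to\id$ on the $\sigma$-strongly dense linear span of reduced words. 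Relative property (T) of $\mathcal A\subset p\mathcal M p$ then produces a finite $\mathcal F_0$ with $\sup_{x\in\Ball(\mathcal A)}\|\Phi_{\mathcal F_0}(x)-x\|_2<\tfrac12\,\Tr(p)^{1/2}$, whence $\|\rE_{\mathcal M_{\mathcal F_0}}(u)\|_2\geq\|u\|_2-\tfrac12\Tr(p)^{1/2}=\tfrac12\,\Tr(p)^{1/2}>0$ for every $u\in\mathcal U(\mathcal A)$. By the $\rL^2$-form of Popa's criterion (Theorem \ref{theorem-intertwining}, used exactly as in the proof of Proposition \ref{proposition-ultraproducts} with the singleton $\{p\}$) this gives $\mathcal A\preceq_{\mathcal M}\mathcal M_{\mathcal F_0}$. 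Note that this step replaces Proposition \ref{proposition-infinite-AFP}, which is unavailable here because $\mathcal A$ may well be amenable.

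\emph{The deformation core.} Working now over the finite set $\mathcal F_0$, I would set up the trace-preserving s-malleable deformation. Enlarge each factor by a free Haar unitary over $\mathcal B$: put $\hat{\mathcal M}_i=\mathcal M_i\ast_{\mathcal B}(\mathcal B\ovt\rL(\Z))$ with generating Haar unitary $u_i=\exp({\rm i}h_i)$ ($h_i=h_i^*$), and form $\hat{\mathcal M}=\ast_{\mathcal B,\,i\in\mathcal F_0}\hat{\mathcal M}_i=\mathcal M_{\mathcal F_0}\ast_{\mathcal B}(\mathcal B\ovt\rL(\F_{|\mathcal F_0|}))$, which is again semifinite with trace $\hat\Tr=\Tr\circ\hat\rE$ by \cite[Theorem 2.6]{Ue98a}. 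The free product $\theta_t=\ast_{i}\Ad(\exp({\rm i}t h_i))$ is a one-parameter group of $\hat\Tr$-preserving automorphisms fixing $\mathcal B$ pointwise with $\theta_t\to\id$ pointwise in $\|\cdot\|_2$, and the symmetry $\beta$ determined by $\beta|_{\mathcal M_{\mathcal F_0}}=\id$, $\beta(u_i)=u_i^*$ satisfies $\beta\theta_t\beta=\theta_{-t}$ and yields Popa's transversality estimate on the finite corner $p\hat{\mathcal M}p$. Since $p\mathcal M p\subset p\hat{\mathcal M}p$ is with expectation, relative property (T) is inherited by $\mathcal A\subset p\hat{\mathcal M}p$, and so forces $\sup_{x\in\Ball(\mathcal A)}\|\theta_{t_0}(x)-x\|_2$ to be arbitrarily small for $t_0$ small. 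Feeding this uniform convergence into Popa's spectral-gap/transversality machinery \cite{Po06} together with the free-product bimodule analysis of \cite{IPP05} should produce a nonzero partial isometry intertwining $\mathcal A$ into one of the enlarged factors, i.e.\ $\mathcal A\preceq_{\hat{\mathcal M}}\hat{\mathcal M}_i$ for some $i\in\mathcal F_0$.

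\emph{Descent and conclusion.} We may assume $\mathcal A\npreceq_{\mathcal M}\mathcal B$, for otherwise $\mathcal A\preceq_{\mathcal M}\mathcal B\subset\mathcal M_i$ already finishes the proof. Since $\mathcal M_{\mathcal F_0}\cap\hat{\mathcal M}_i=\mathcal M_i$ with compatible trace-preserving conditional expectations and $\mathcal A\npreceq_{\mathcal M}\mathcal B$, the relative-position dichotomy for amalgamated free products (as in \cite[Theorem 1.1]{IPP05}, combined with Lemma \ref{lemma-intertwining} and Remark \ref{remark-intertwining}(2)) upgrades $\mathcal A\preceq_{\hat{\mathcal M}}\hat{\mathcal M}_i$ to $\mathcal A\preceq_{\mathcal M_{\mathcal F_0}}\mathcal M_i$, hence to $\mathcal A\preceq_{\mathcal M}\mathcal M_i$. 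I expect the main obstacle to be precisely the deformation core: rerunning Popa's transversality and spectral-gap estimates in the semifinite setting — where one has only the infinite trace $\hat\Tr$ and must localize everything to the finite corner $p\hat{\mathcal M}p$ — and then carrying out the ``untwisting'' that both localizes the intertwining to a single enlarged factor and descends it to $\mathcal M_i$, all while keeping track of the required conditional expectations (Proposition \ref{prop-with-expectation} and Remark \ref{remark-intertwining}(1)).
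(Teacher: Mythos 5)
Your first step (reduction to a finite subset $\mathcal F_0$) is correct and is in fact verbatim the paper's opening move: the same completely positive maps $\Phi_{\mathcal F} = p\,\rE_{\mathcal M_{\mathcal F}}(\cdot)\,p$, the same relative property (T) estimate on $\mathcal U(\mathcal A)$, and Theorem \ref{theorem-intertwining} yield $\mathcal A \preceq_{\mathcal M} \mathcal M_{\mathcal F_0}$; your observation that Proposition \ref{proposition-infinite-AFP} cannot be used here is also correct. The genuine gap is in your deformation core, and it is structural rather than technical: the algebra $\hat{\mathcal M} = \ast_{\mathcal B,\, i \in \mathcal F_0}\hat{\mathcal M}_i$ contains $\mathcal M_{\mathcal F_0}$ but \emph{not} $\mathcal M_{\mathcal F_0^c}$, hence not $\mathcal M$. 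Consequently $\mathcal A \subset p\mathcal M p$ is simply not a subalgebra of $p\hat{\mathcal M}p$, the claimed inclusion with expectation ``$p\mathcal M p \subset p\hat{\mathcal M}p$'' is false, relative property (T) cannot be ``inherited'' by $\mathcal A \subset p\hat{\mathcal M}p$, and the expression ``$\mathcal A \preceq_{\mathcal M_{\mathcal F_0}} \mathcal M_i$'' in your descent step is not even well formed. The underlying misconception is that $\mathcal A \preceq_{\mathcal M} \mathcal M_{\mathcal F_0}$ lets you treat $\mathcal A$ as if it sat inside (an amplification of) $\mathcal M_{\mathcal F_0}$. It does not: you must first manufacture an honest \emph{unital} copy of $\mathcal A$ there. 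This is exactly what the paper does after discarding the trivial case $\mathcal A \preceq_{\mathcal M} \mathcal B$: Lemma \ref{lemma-intertwining} produces a finite trace projection $q \in \mathbf M_n(\mathcal M_{\mathcal F_0})$, a partial isometry $w$ and a unital $\ast$-homomorphism $\pi : \mathcal A \to q\mathbf M_n(\mathcal M_{\mathcal F_0})q$ with $\pi(\mathcal A) \npreceq_{\mathbf M_n(\mathcal M_{\mathcal F_0})} \mathbf M_n(\mathcal B)$; this non-embedding feeds \cite[Theorem 2.5]{BHR12} to arrange $q = w^*w$, so that $w^*\mathcal A w = \pi(\mathcal A) \subset q\mathbf M_n(\mathcal M_{\mathcal F_0})q$, and \cite[Proposition 4.7]{Po01} transports relative property (T) to the inclusion $w^*\mathcal A w \subset q\mathbf M_n(\mathcal M)q$.

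Even after this repair, the deformation/rigidity input must see the complementary factor: one has to work with $\mathbf M_n(\mathcal M) = \left(\ast_{\mathbf M_n(\mathcal B),\, i \in \mathcal F_0} \mathbf M_n(\mathcal M_i)\right) \ast_{\mathbf M_n(\mathcal B)} \mathbf M_n(\mathcal M_{\mathcal F_0^c})$, an AFP of \emph{finitely many} algebras, because the dichotomy it produces necessarily includes the alternative $w^*\mathcal A w \preceq_{\mathbf M_n(\mathcal M)} \mathbf M_n(\mathcal M_{\mathcal F_0^c})$, and this alternative must be excluded before you can land in a single $\mathcal M_i$ with $i \in \mathcal F_0$. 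The paper excludes it via Theorem \ref{theorem-intertwining} together with Lemma \ref{lemma-control-sequence} (freeness), and that exclusion uses precisely the honest containment $w^*\mathcal A w \subset q\mathbf M_n(\mathcal M_{\mathcal F_0})q$ which your proposal never establishes; your deformation, built over $\mathcal F_0$ alone, cannot detect this issue at all. Finally, two remarks on the machinery you invoke: for a relative property (T) subalgebra the relevant engine is the Ioana--Peterson--Popa rigidity argument, not the spectral gap/transversality method of \cite{Po06}; and you do not need to rebuild it in the semifinite setting, since the semifinite version of \cite[Theorem 4.3]{IPP05} for AFPs of finitely many algebras is already available as \cite[Theorem 3.3]{BHR12}, which is what the paper applies to conclude $w^*\mathcal A w \preceq_{\mathbf M_n(\mathcal M)} \mathbf M_n(\mathcal M_i)$ for some $i \in \mathcal F_0$, hence $\mathcal A \preceq_{\mathcal M} \mathcal M_i$ by Remark \ref{remark-intertwining} (2).
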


\begin{proof}
For each nonempty finite subset $\mathcal F \subset I$, regard $\mathcal M = \mathcal M_{\mathcal F} \ast_{\mathcal B} \mathcal M_{\mathcal F^c}$ and denote by $\rE_{\mathcal M_{\mathcal F}} : \mathcal M \to \mathcal M_{\mathcal F}$ the unique trace preserving conditional expectation. Define the net $\Phi_{\mathcal F} : p \mathcal M p \to  p\mathcal Mp$ of subtracial subunital unital completely positive maps by $\Phi_{\mathcal F}(x) = p \rE_{\mathcal M_{\mathcal F}}(x) p$ for all $x \in p\mathcal M p$. Observe that $\lim_{\mathcal F} \|\Phi_{\mathcal F}(x) - x\|_2 = 0$ for all $x \in p\mathcal M p$.

By relative property (T) of the inclusion $\mathcal A \subset p \mathcal M p$, there exists a nonempty finite subset $\mathcal F \subset I$ such that 
$$\|\rE_{\mathcal M_{\mathcal F}}(u)\|_2 \geq \|p \rE_{\mathcal M_{\mathcal F}}(u) p\|_2 = \|\Phi_{\mathcal F}(u)\|_2 \geq \frac12 \|p\|_2, \forall u \in \mathcal U(\mathcal A).$$
 If $\mathcal A \npreceq_{\mathcal M} \mathcal M_{\mathcal F}$, then by Theorem \ref{theorem-intertwining}, there exists a net $(u_j)_{j\in J}$ in $\mathcal U(\mathcal A)$ with $\lim_j \| \rE_{\mathcal M_{\mathcal F}}(u_j)\|_2 = 0$; contradicting the above inequality. Hence we obtain $\mathcal A \preceq_{\mathcal M} \mathcal M_{\mathcal F}$. 

If $\mathcal A \preceq_{\mathcal M} \mathcal B$, then $\mathcal A \preceq_{\mathcal M} \mathcal M_i$ for any $i \in I$ and we are done. If $\mathcal A \npreceq_{\mathcal M} \mathcal B$, then Theorem \ref{theorem-intertwining} and Lemma \ref{lemma-intertwining} altogether enable us to choose $n \geq 1$, a finite trace projection $q \in \mathbf M_n(\mathcal M_{\mathcal F})$, a nonzero partial isometry $w \in \mathbf M_{1, n}(p\mathcal M)q$ and a unital normal $\ast$-homomorphism $\pi : \mathcal A \to q\mathbf M_n(\mathcal M_{\mathcal F})q$ such that $\pi(\mathcal A) \npreceq_{\mathbf M_n(\mathcal M_{\mathcal F})} \mathbf M_n(\mathcal B)$ and $a w = w \pi(a)$ for all $a \in  \mathcal A$. By \cite[Theorem 2.5]{BHR12}, we have $\pi(\mathcal A)' \cap q\mathbf M_n(\mathcal M)q = \pi(\mathcal A)' \cap q\mathbf M_n(\mathcal M_{\mathcal F})q$. Then we have $w^*w \in \pi(\mathcal A)' \cap q\mathbf M_n(\mathcal M_{\mathcal F})q$ and so we  may assume that $q = w^*w$ and $w^* \mathcal A w = \pi(\mathcal A) \subset q\mathbf M_n(\mathcal M_{\mathcal F})q$.

By relative property (T) of the inclusion $\mathcal A \subset p \mathcal M p$ and \cite[Proposition 4.7]{Po01}, the unital inclusion $w^* \mathcal A w \subset q\mathbf M_n(\mathcal M)q$ has relative property (T). 
Consider
$$\mathbf M_n(\mathcal M) = \left( \ast_{\mathbf M_n(\mathcal B), i \in \mathcal F} \mathbf M_n(\mathcal M_i) \right ) \ast_{\mathbf M_n(\mathcal B)} \mathbf M_n(\mathcal M_{\mathcal F^c}).$$
Since $\mathcal A \npreceq_{\mathcal M} \mathcal B$, we have $w^* \mathcal A w \npreceq_{\mathbf M_n(\mathcal M)} \mathbf M_n(\mathcal B)$ by Remark \ref{remark-intertwining} (2). Since moreover $w^* \mathcal A w \subset q\mathbf M_n(\mathcal M_{\mathcal F})q$, we have $w^* \mathcal A w \npreceq_{\mathbf M_n(\mathcal M)} \mathbf M_n(\mathcal M_{\mathcal F^c})$ by Theorem \ref{theorem-intertwining} with the help of Lemma \ref{lemma-control-sequence}.  
Since the unital inclusion $w^* \mathcal A w \subset q\mathbf M_n(\mathcal M)q$ moreover has relative property (T), \cite[Theorem 3.3]{BHR12}  (whose proof works well for semifinite amalgamated free products of finitely many algebras) shows that there exists $i \in \mathcal F$ such that $w^* \mathcal A w \preceq_{\mathbf M_n(\mathcal M)} \mathbf M_n(\mathcal M_i)$. By Remark \ref{remark-intertwining} (2), this implies that $\mathcal A \preceq_{\mathcal M} \mathcal M_i$.
\end{proof}

We point out that we do not need to assume $\mathcal B$ to be amenable in Theorem \ref{thm-relative-property-T-semifinite-AFP}. We finally deduce the following result that will be used in the proof of the Main Theorem (Case (iv)).

\begin{thm}\label{thm-relative-property-T-general-AFP}
Let $I$ be any nonempty set and $(B \subset M_i)_{i \in I}$ any family of inclusions of $\sigma$-finite von Neumann algebras with faithful normal conditional expectations $\rE_i : M_i \to B$. Denote by $(M, \rE) = \ast_{B, i \in I} (M_i, \rE_i)$ the corresponding amalgamated free product.

Let $1_Q \in M$ be any nonzero projection and $Q \subset 1_QM1_Q$ any finite von Neumann subalgebra with expectation that possesses a von Neumann subalgebra $A \subset Q$ with relative property (T). Then there exists $i \in I$ such that $A \preceq_M M_i$.
\end{thm}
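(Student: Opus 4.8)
The plan is to reduce the statement to its semifinite counterpart, Theorem \ref{thm-relative-property-T-semifinite-AFP}, by passing to the continuous core, exactly along the lines of the proofs of Theorems \ref{thm-spectral-gap-general-AFP} and \ref{thm-normalizer-general-AFP}. Let $\tau$ denote the trace on $Q$ with respect to which $A \subset Q$ has relative property (T). First I would choose a state adapted to $Q$: set $\widetilde Q = Q \oplus \C(1_M - 1_Q)$, pick a faithful normal conditional expectation $\rE_{\widetilde Q}: M \to \widetilde Q$ (which exists since $Q \subset 1_Q M 1_Q$ is with expectation) together with a faithful normal trace $\tau_{\widetilde Q}$ on $\widetilde Q$ restricting to $\tau$ on $Q$, and put $\psi = \tau_{\widetilde Q}\circ\rE_{\widetilde Q}$. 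Then $1_Q \in M^\psi$ and $A \subset Q \subset 1_Q M^\psi 1_Q$, so that $\pi_\psi(A)$ and $\pi_\psi(Q)$ commute with $\lambda_\psi(\R)$ inside $\core_\psi(M)$ and one has $\core_\psi(Q) = \pi_\psi(Q)\ovt\rL_\psi(\R)$. Fixing a faithful state $\varphi \in B_\ast$ and setting $\mathcal B := \core_\varphi(B)$, $\mathcal M := \core_{\varphi\circ\rE}(M)$ and $\mathcal M_i := \core_{\varphi\circ\rE_i}(M_i)$, the continuous core $\mathcal M$ is naturally the semifinite amalgamated free product $\ast_{\mathcal B, i \in I}(\mathcal M_i, \rE_i)$ as in Notation \ref{notation-semifinite-AFP}; note that Theorem \ref{thm-relative-property-T-semifinite-AFP} does not require $\mathcal B$ to be amenable, which is what allows us to assume no amenability on $B$.

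Next I would fix a nonzero finite-trace projection $q \in \rL_\psi(\R)$ and transport everything through the canonical trace-preserving isomorphism $\Pi := \Pi_{\varphi\circ\rE,\, \psi} : \core_\psi(M)\to\mathcal M$. Put $p := \Pi(\pi_\psi(1_Q)q)$, a nonzero finite-trace projection of $\mathcal M$ (its trace equals $\psi(1_Q)\Tr_\psi(q)$), and $\mathcal A := \Pi(\pi_\psi(A)\, q) \cong A$, a von Neumann subalgebra of $p\mathcal M p$. The key point is that $\mathcal A \subset p\mathcal M p$ still has relative property (T). To see this, observe that $\pi_\psi(Q)\, q \cong Q$ sits in $p\mathcal M p$ with a trace-preserving conditional expectation: lift a $\psi$-preserving expectation $1_Q M 1_Q \to Q$ to $\core_\psi(1_Q M 1_Q)$, cut by $q$, and compose with $\id_Q$ tensored with the normalized-trace state on the finite abelian algebra $q\rL_\psi(\R)q$. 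Hence relative property (T) of $A \subset Q$ propagates to relative property (T) of $\mathcal A \subset p\mathcal M p$ by the permanence property \cite[Proposition 4.7]{Po01} (used in the same way in the proof of Theorem \ref{thm-relative-property-T-semifinite-AFP}). Theorem \ref{thm-relative-property-T-semifinite-AFP} then yields some $i \in I$ with $\mathcal A \preceq_{\mathcal M}\mathcal M_i$.

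Finally, since $\mathcal A = \Pi(\pi_\psi(A)\, q)$, the relation $\mathcal A \preceq_{\mathcal M}\mathcal M_i$ translates, through \cite[Lemma 2.4]{HU15}, into $A \preceq_M M_i$, which is the desired conclusion; this is precisely the pull-back step already carried out at the end of the proof of Theorem \ref{thm-spectral-gap-general-AFP}. The only genuinely new ingredient beyond this bookkeeping is the verification that relative property (T), a finite/tracial notion, survives the passage to the semifinite core. The hard part is therefore to set up the core so that the cut-down subalgebra $\mathcal A$ remains in relative-property-(T) position, which is achieved by embedding the tracial corner $\pi_\psi(Q)\, q \cong Q$ into the finite algebra $p\mathcal M p$ with a trace-preserving expectation and invoking \cite[Proposition 4.7]{Po01}; everything else is a direct transcription of the core-passage arguments in Theorems \ref{thm-spectral-gap-general-AFP} and \ref{thm-normalizer-general-AFP}.
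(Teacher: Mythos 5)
Your proposal is correct and follows essentially the same route as the paper's proof: pass to the continuous core $\mathcal M = \core_{\varphi\circ\rE}(M)$ via a state $\psi = \tau_{\widetilde Q}\circ\rE_{\widetilde Q}$ putting $Q$ in the centralizer, cut by a finite trace projection $q \in \rL_\psi(\R)$ so that the inclusion $\bigl(\Pi_{\varphi,\psi}(\pi_\psi(A)q) \subset \Pi_{\varphi,\psi}(\pi_\psi(Q)q)\bigr) \cong (A \subset Q)$ carries relative property (T) into $p\mathcal M p$, apply Theorem \ref{thm-relative-property-T-semifinite-AFP}, and pull back with \cite[Lemma 2.4]{HU15}. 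The only deviations are harmless refinements of detail (taking $p = \Pi_{\varphi,\psi}(\pi_\psi(1_Q)q)$ rather than $\Pi_{\varphi,\psi}(q)$, choosing $\tau_{\widetilde Q}$ to restrict to the relative property (T) trace, and exhibiting explicitly the trace-preserving expectation onto the copy of $Q$ before invoking \cite[Proposition 4.7]{Po01}), which only make explicit what the paper leaves implicit.
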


\begin{proof}
Put $\widetilde Q = Q \oplus \C (1_M - 1_Q)$ and let $\rE_{\widetilde Q} : M \to \widetilde Q$ be a faithful normal conditional expectation. Choose a faithful trace $\tau_{\widetilde Q} \in (\widetilde Q)_\ast$ and put $\psi = \tau_{\widetilde Q} \circ \rE_{\widetilde Q}$. Observe that $1_Q \in M^\psi$ and $Q \subset 1_Q M^\psi 1_Q$. 

Fix a faithful state $\varphi \in B_\ast$ and put $\mathcal B := \core_\varphi(B)$, $\mathcal M := \core_{\varphi \circ \rE}(M)$ and $\mathcal M_i := \core_{\varphi \circ \rE_i}(M_i)$ for every $i \in I$. Fix a nonzero finite trace projection $q \in \rL_\psi(\R)$ and put $p := \Pi_{\varphi, \psi}(q) \in \mathcal M$ and $\mathcal A := \Pi_{\varphi, \psi}(\pi_\psi(A) q) \subset p\mathcal Mp$. Since the inclusion $A \subset Q$ has relative property (T) and since 
$$\left( \Pi_{\varphi, \psi}(\pi_\psi(A) q) \subset \Pi_{\varphi, \psi}(\pi_\psi(Q) q) \right ) \cong (A \subset Q)$$ 
and
$$\mathcal A = \Pi_{\varphi, \psi}(\pi_\psi(A) q) \subset \Pi_{\varphi, \psi}(\pi_\psi(Q) q) \subset p \mathcal M p,$$
the inclusion $\mathcal A \subset p \mathcal M p$ also has relative property (T). By Theorem \ref{thm-relative-property-T-semifinite-AFP}, there exists $i \in I$ such that $\mathcal A \preceq_{\mathcal M} \mathcal M_i$, and hence $A \preceq_M M_i$ by \cite[Lemma 2.4]{HU15}.
\end{proof}

\section{Proof of the Main Theorem}\label{proof}

Assume that $M$ and $N$ are isomorphic and identify $M = N$. Note however that we cannot identify $\varphi$ with $\psi$.

Fix an arbitrary $i \in I$. We first prove the following intermediate assertion: 

($\diamondsuit$) There exist $j = \alpha(i) \in J$, $n_j \geq 1$ and a nonzero partial isometry $v_j \in \mathbf M_{1, n_j}(M)$ such that $v_j^*v_j \in \mathbf M_{n_j}(N_j)$, $v_jv_j^* \in M_i$ and $v_j^* M_i v_j \subset v_j^*v_j \mathbf M_{n_j}(N_j) v_j^*v_j$. Observe that the unital inclusion $v_jv_j^* M_i v_jv_j^* \subset v_j v_j^* M v_j v_j^*$ is with expectation and hence so is the unital inclusion $v_j^* M_i v_j \subset v_j^* v_j \mathbf M_{n_j}(M) v_j^* v_j$. Therefore the unital inclusion $v_j^* M_i v_j \subset v_j^*v_j \mathbf M_{n_j}(N_j) v_j^*v_j$ is with expectation. When $N_j$ is semifinite, we will be able to choose the partial isometry $v_j \in \mathbf M_{1, n_j}(M)$ in such a way that $v_j^* v_j \in \mathbf M_{n_j}(N_j)$ has finite trace. This is because we are going to use Theorem \ref{theorem-intertwining} and show, as a crucial step, that $A \preceq_M N_j$ for a well-chosen {\em finite} von Neumann subalgebra $A \subset M_i$ with expectation. 

We will treat cases (i), (ii), (iii), (iv) separately as follows.  

{\bf Case (i).} Assume that $M_i$ is not prime, and hence we may write $M_i = P_1 \ovt P_2$ with diffuse factors $P_1$ and $P_2$. We may assume without loss of generality that $P_2$ is not amenable. Choose a faithful state $\chi_1 \in (P_1)_\ast$ such that $(P_1)^{\chi_1}$ is diffuse (see \cite[Theorem 11.1]{HS90}) and a faithful state $\chi_2 \in (P_2)_\ast$, and put $\chi = \chi_1 \otimes \chi_2$. Observe that there exists a diffuse abelian von Neumann subalgebra $A_1 \subset (P_1)^{\chi_1}$, and put $A = A_1 \otimes \C1$. Since $P_2$ is not amenable and since the unital inclusion $\C 1 \otimes P_2 \subset A' \cap M$ is with expectation (observe that $\C1 \otimes P_2 \subset M$ is with expectation), $A' \cap M$ is not amenable either. By Theorem \ref{thm-spectral-gap-general-AFP}, there exists $j = \alpha(i) \in J$ such that $A \preceq_M N_j$.

There exist $n_j \geq 1$, a projection $q_j \in \mathbf M_{n_j}(N_j)$, a nonzero partial isometry $v_j \in \mathbf M_{1, n_j}(M)$ and a unital normal $\ast$-homomorphism $\pi : A \to q_j\mathbf M_{n_j}(N_j)q_j$ such that the unital inclusion $\pi(A) \subset q_j \mathbf M_{n_j}(N_j) q_j$ is with expectation and $a v_j = v_j \pi(a)$ for all $a \in A$. By Proposition \ref{prop-with-expectation} (see Remark \ref{remark-intertwining} (1)), the unital inclusions $Av_jv_j^* \subset v_jv_j^* M v_jv_j^*$  and $v_j^* A v_j = \pi(A) v_j^*v_j \subset v_j^*v_j \mathbf M_{n_j}(M) v_j^*v_j$ are with expectation. By \cite[Proposition 2.7 (2)]{HU15}, we have $v_j v_j^* \in A' \cap M = A' \cap M_i$, $v_j^*v_j \in \mathbf M_{n_j}(N_j)$ and $v_j^* (A' \cap M_i) v_j \subset v_j^*v_j \mathbf M_{n_j}(N_j) v_j^*v_j$.

Observe that $A' \cap M_i = ((A_1)' \cap P_1) \ovt P_2$. By the same reasoning as in the proof of \cite[Lemma 4.13]{HI15} and by Lemma \ref{lemma-equivalence}, there exist nonzero projections $p_1 \in (A_1)' \cap (P_1)^{\chi_1}$ and $p_2 \in (P_2)^{\chi_2}$ such that $p_1p_2 \precsim v_jv_j^*$ in $A' \cap M_i$. Let $u \in A' \cap M_i$ be a partial isometry such that $uu^* = p_1p_2$ and $u^*u \leq v_jv_j^*$. We have $a \, u v_j = u v_j \, \pi(a)$ for all $a \in A$ and $(uv_j) (uv_j)^* = u v_j v_j^* u^* = p_1p_2$ and $(uv_j)^* (uv_j) = v_j^* u^*u v_j \in \mathbf M_{n_j}(N_j)$. So, up to replacing $v_j$ with $u v_j$, we may assume that $v_j v_j^* = p_1p_2$.

Observe that the unital inclusion $p_2P_2p_2p_1 \subset p_1p_2 M p_1 p_2$ is with expectation and so is the unital inclusion $v_j^* P_2 p_1 v_j \subset v_j^*v_j \mathbf M_{n_j}(N_j) v_j^*v_j$ (recall that $v_j^* (A' \cap M_i) v_j \subset v_j^*v_j \mathbf M_{n_j}(N_j) v_j^*v_j$). Then \cite[Proposition 2.7 (1)]{HU15} shows that 
\begin{align*}
v_j^* P_1 p_2 v_j &= (v_j^* P_2 p_1 v_j)' \cap (v_j^* M v_j) \\
&= (v_j^* P_2 p_1 v_j)' \cap v_j^*v_j \mathbf M_{n_j}(M) v_j^*v_j \\
&= (v_j^* P_2 p_1 v_j)' \cap v_j^*v_j \mathbf M_{n_j}(N_j) v_j^*v_j \\
&\subset v_j^*v_j \mathbf M_{n_j}(N_j) v_j^*v_j.
\end{align*}
Since  $v_j^* M_i v_j = v_j^* P_1 p_2 v_j \vee v_j^* P_2 p_1 v_j$, we obtain $v_j^* M_i v_j \subset v_j^*v_j \mathbf M_{n_j}(N_j) v_j^*v_j$.

{\bf Case (ii).} Assume that $M_i$ has property Gamma, that is, the central sequence algebra $(M_i)' \cap (M_i)^\omega$ is diffuse. By Theorem \ref{theorem-structure-gamma}, there exists a decreasing sequence $(A_n)_n$ of diffuse abelian subalgebras of $M_i$ with expectation such that $M_i = \bigvee_n ((A_n)' \cap M_i)$. Since $M_i$ is not amenable, there exists $n \in \N$ such that $(A_n)' \cap M_i$ is not amenable. Observe that $(A_n)' \cap M_i = (A_n)' \cap M$ by \cite[Proposition 2.7 (1)]{HU15}. By Theorem \ref{thm-spectral-gap-general-AFP}, there exists $j = \alpha(i) \in J$ such that $A_n \preceq_M N_j$.

There exist $n_j \geq 1$, a projection $q_j \in \mathbf M_{n_j}(N_j)$, a nonzero partial isometry $v_j \in \mathbf M_{1, n_j}(M)$ and a unital normal $\ast$-homomorphism $\pi : A_n \to  q_j\mathbf M_{n_j}(N_j) q_j$ such that the unital inclusion $\pi(A_n) \subset q_j \mathbf M_{n_j}(N_j) q_j$ is with expectation and $a v_j = v_j \pi(a)$ for all $a \in A_n$. By \cite[Proposition 2.7 (2)]{HU15}, we have $v_j v_j^* \in (A_n)' \cap M = (A_n)' \cap M_i$, $v_j^*v_j \in \mathbf M_{n_j}(N_j)$ and $v_j^* ((A_n)' \cap M_i) v_j \subset v_j^*v_j \mathbf M_{n_j}(N_j) v_j^*v_j$. Observe that $\pi(A_k) \subset \pi(A_n)$ is with expectation for every $k \geq n$ (since $A_n$ is abelian). Hence, the inclusion $\pi(A_k) \subset q_j\mathbf M_{n_j}(N_j)q_j$ is with expectation for every $k \geq n$. As in the second paragraph in case (i) we observe that $v_j^* ((A_k)' \cap M_i) v_j \subset v_j^*v_j \mathbf M_{n_j}(N_j) v_j^*v_j$ for every $k \geq n$. Since $M_i = \bigvee_{n \in \N} ((A_n)' \cap M_i)$, we finally obtain $v_j^* M_i v_j \subset v_j^*v_j \mathbf M_{n_j}(N_j) v_j^*v_j$.

{\bf Case (iii).} Assume that $M_i$ possesses an amenable finite von Neumann subalgebra $A$ with expectation such that $A' \cap M_i = \mathcal Z(A)$ and $\mathcal N_{M_i}(A)\dpr = M_i$. Since $M_i$ is not a type ${\rm I}$ factor, it is easy to see that $A$ is necessarily diffuse and hence \cite[Proposition 2.7]{HU15} shows that $A' \cap M = A' \cap M_i = \mathcal Z(A)$ and $\mathcal N_M(A)\dpr = \mathcal N_{M_i}(A)\dpr = M_i$. By Theorem \ref{thm-normalizer-general-AFP}, there exists $j = \alpha(i) \in J$ such that $A \preceq_M N_j$. Namely, there exist $n_j \geq 1$, a projection $q_j \in \mathbf M_{n_j}(N_j)$, a nonzero partial isometry $v_j \in \mathbf M_{1, n_j}(M)$ and a unital normal $\ast$-homomorphism $\pi : A \to q_j\mathbf M_{n_j}(N_j)q_j$ such that the unital inclusion $\pi(A) \subset q_j\mathbf M_{n_j}(N_j)q_j$ is with expectation and $a v_j = v_j \pi(a)$ for all $a \in A$. By \cite[Proposition 2.7]{HU15}, we have $v_j v_j^* \in A' \cap M = A' \cap M_i$ and $v_j^*v_j \in \mathbf M_{n_j}(N_j)$, and hence $v_j^* M_i v_j \subset v_j^*v_j \mathbf M_{n_j}(N_j) v_j^*v_j$.

{\bf Case (iv).} Assume that $M_i$ is a ${\rm II_1}$ factor that possesses a regular diffuse von Neumann subalgebra $A \subset M_i$ with relative property (T). By Theorem \ref{thm-relative-property-T-general-AFP}, there exists $j = \alpha(i) \in J$ such that $A \preceq_M N_j$. In the exactly same way as in the proof of case (iii), we conclude that there exist $n_j \geq 1$ and  a nonzero partial isometry $v_j \in \mathbf M_{1, n_j}(M)$ such that $v_j^*v_j \in \mathbf M_{n_j}(N_j)$, $v_jv_j^* \in M_i$ and $v_j^* M_i v_j \subset v_j^*v_j \mathbf M_{n_j}(N_j) v_j^*v_j$.

We have completed the proof of the desired intermediate assertion ($\diamondsuit$).  

By symmetry, for any given $j \in J$, there exist $i = \beta(j) \in I$, $m_i \geq 1$ and a nonzero partial isometry $w_i \in \mathbf{M}_{1,m_i}(M)$ such that $w_i^* w_i \in \mathbf{M}_{m_i}(M_i)$, $w_i w_i^* \in N_j$ and $w_i^* N_j w_i \subset w_i^* w_i \mathbf{M}_{m_i}(M_i) w_i^* w_i$. Moreover, the unital inclusion $w_i^* N_j w_i \subset w_i^* w_i \mathbf{M}_{m_i}(M_i) w_i^* w_i$ is with expectation. 

For every $i \in I$, put $w_{i}^{(n_{\alpha(i)})} := w_{i} \otimes 1_{n_{\alpha(i)}} \in \mathbf M_{1, m_i} (M)\otimes \mathbf M_{n_{\alpha(i)}}(\C)= \mathbf{M}_{n_{\alpha(i)}, n_{\alpha(i)} m_i}(M)$. Observe that $w_{i}^{(n_{\alpha(i)})}\big(w_{i}^{(n_{\alpha(i)})}\big)^* = w_i w_i^* \otimes 1_{n_{\alpha(i)}} \in \mathbf M_{n_{\alpha(i)}}(N_{\alpha(i)})$, $\big(w_{i}^{(n_{\alpha(i)})}\big)^* w_{i}^{(n_{\alpha(i)})} = w_i^*w_i \otimes 1_{n_{\alpha(i)}} \in \mathbf{M}_{n_{\alpha(i)} m_{i}}(M_{\beta(\alpha(i))})$ and
\begin{align*}
\big(w_{i}^{(n_{\alpha(i)})}\big)^* \mathbf{M}_{n_{\alpha(i)}}(N_{\alpha(i)}) w_{i}^{(n_{\alpha(i)})} &= w_i^* N_{\alpha(i)} w_i \otimes \mathbf M_{n_{\alpha(i)}}(\C) \\
& \subset w_i^*w_i \mathbf M_{m_i}(M_{\beta(\alpha(i))})w_i^*w_i \otimes \mathbf M_{n_{\alpha(i)}}(\C) \\
&= \big(w_{i}^{(n_{\alpha(i)})}\big)^* w_{i}^{(n_{\alpha(i)})} \mathbf{M}_{n_{\alpha(i)} m_{i}}(M_{\beta(\alpha(i))}) \big(w_{i}^{(n_{\alpha(i)})}\big)^* w_{i}^{(n_{\alpha(i)})}. 
\end{align*}
Since the inclusion $w_i^* N_{\alpha(i)} w_i \subset w_i^*w_i \mathbf M_{m_i}(M_{\beta(\alpha(i))})w_i^*w_i$ is with expectation, so is the above inclusion.

Since $M_i$ and $N_{\alpha(i)}$ are diffuse factors and since the projection $(v_{\alpha(i)})^* v_{\alpha(i)} \in \mathbf M_{n_{\alpha(i)}}(N_{\alpha(i)})$ has finite trace if $N_{\alpha(i)}$ is semifinite as claimed in the first paragraph of the proof, up to shrinking  $v_{\alpha(i)} (v_{\alpha(i)})^* \in M_{i}$ if necessary, we may further choose the partial isometry $v_{\alpha(i)} \in \mathbf M_{1, n_{\alpha(i)}}(M)$ so that $(v_{\alpha(i)})^* v_{\alpha(i)} \precsim w_{i}^{(n_{\alpha(i)})}\big(w_{i}^{(n_{\alpha(i)})}\big)^*$ in $\mathbf M_{n_{\alpha(i)}}(N_{\alpha(i)})$. Since $N_{\alpha(i)}$ is a factor, we can find a nonzero partial isometry $u \in \mathbf{M}_{n_{\alpha(i)}}(N_{\alpha(i)})$ such that $u u^* = (v_{\alpha(i)})^* v_{\alpha(i)}$ and $u^*u \leq w_{i}^{(n_{\alpha(i)})}\big(w_{i}^{(n_{\alpha(i)})}\big)^*$. Then $v := v_{\alpha(i)}u w_{i}^{n_{\alpha(i)}}$ is a nonzero partial isometry in $\mathbf{M}_{1, n_{\alpha(i)} m_{i}}(M)$ such that 
\begin{align*}
vv^* 
&= v_{\alpha(i)}u w_{i}^{(n_{\alpha(i)})}\big(w_{i}^{(n_{\alpha(i)})}\big)^* u^* (v_{\alpha(i)})^* 
= v_{\alpha(i)} uu^* (v_{\alpha(i)})^* 
= v_{\alpha(i)} (v_{\alpha(i)})^* \in M_i \\
v^*v 
&= \big(w_{i}^{(n_{\alpha(i)})}\big)^* u^* (v_{\alpha(i)})^* v_{\alpha(i)}u w_{i}^{(n_{\alpha(i)})} 
= \big(w_{i}^{(n_{\alpha(i)})}\big)^* u^* u w_{i}^{(n_{\alpha(i)})} \in \mathbf{M}_{n_{\alpha(i)} m_{i}}(M_{\beta(\alpha(i))})
\end{align*} 
and
\begin{align}\label{Eq1}
v^* M_i v &= \big(w_{i}^{(n_{\alpha(i)})}\big)^* u^* (v_{\alpha(i)})^* \, M_i \, v_{\alpha(i)}u\,w_{i}^{(n_{\alpha(i)})} \\ \nonumber
&\subset \big(w_{i}^{(n_{\alpha(i)})}\big)^* \, u^* \,  \mathbf M_{n_{\alpha(i)}}(N_{\alpha(i)}) \, u \, w_{i}^{(n_{\alpha(i)})} \\ \nonumber
& \subset v^* v\mathbf{M}_{n_{\alpha(i)} m_{i}}(M_{\beta(\alpha(i))}) v^*v. 
\end{align}
Note that the inclusions in \eqref{Eq1} are with expectation.

By Lemma \ref{lemma-control}, we have $\beta(\alpha(i)) = i$ for every $i \in I$. Since the inclusions in \eqref{Eq1} are with expectation and since $vv^* \in M_i$ and $v^*v \in \mathbf{M}_{n_{\alpha(i)} m_{i}}(M_{i}) $, we necessarily  have $v \in \mathbf M_{1,n_{\alpha(i)} m_{i}}(M_i)$ by \cite[Proposition 2.7 (1)]{HU15}. Therefore, \eqref{Eq1} must be equality with $\beta(\alpha(i))=i$. This implies that $v_{\alpha(i)} u = v \big(w_{i}^{(n_{\alpha(i)})}\big)^* \in \mathbf M_{1, n_{\alpha(i)}}(M)$ with $(v_{\alpha(i)} u) (v_{\alpha(i)} u)^* = v \big(w_{i}^{(n_{\alpha(i)})}\big)^* w_{i}^{(n_{\alpha(i)})} v^* \in M_i$, $(v_{\alpha(i)} u)^* (v_{\alpha(i)} u) = u^*u \in \mathbf M_{n_{\alpha(i)}}(N_{\alpha(i)})$ and $u^* (v_{\alpha(i)})^* M_i v_{\alpha(i)} u = u^* u \mathbf{M}_{n_{\alpha(i)}}(N_{\alpha(i)})u^* u$. By symmetry, we have $\alpha(\beta(j)) = j$ for every $j \in J$. This shows that $\alpha : I \to J$ is indeed a bijection and $M_i$ and $N_{\alpha(i)}$ are stably isomorphic to each other for every $i \in I$. Hence we have proved item (1) of the Main Theorem.

Assume moreover that $M_i$ is a type ${\rm III}$ factor for every $i \in I$. This forces $N_j$ to be a type ${\rm III}$ factor for every $j \in J$. Therefore, up to conjugating by partial isometries in $M_i$ and $N_{\alpha(i)}$, we may assume that $n_{\alpha(i)} = 1$ and that there exists a unitary $u_i \in \mathcal U(M)$ such that $u_i M_i u_i^* = N_{\alpha(i)}$ for every $i \in I$. The uniqueness of the bijection $\alpha : I \to J$ as in item (2) of the Main Theorem is guaranteed by Lemma \ref{lemma-control}. Therefore, we have completed the proof of the Main Theorem.

\section{Further results}\label{further-results}

Following \cite{Oz03, VV05}, we say that a $\sigma$-finite diffuse von Neumann algebra $M$ is {\em solid} if for any diffuse von Neumann subalgebra $A \subset M$ with expectation, the relative commutant $A' \cap M$ is amenable. More generally, we will say that a $\sigma$-finite (not necessarily diffuse) von Neumann algebra $M$ is solid if either $M$ is atomic or if its nonzero diffuse direct summand is solid. Recall that whenever $M$ is a diffuse solid von Neumann algebra, $p\mathbf M_n(M)p$ is also solid for all $n \geq 1$ and all nonzero projections $p \in \mathbf M_n(M)$ (see e.g.\ \cite[Proposition 3.2]{HR14} for a similar statement and its proof). The class of solid von Neumann algebras includes bi-exact group von Neumann algebras \cite{BO08, Oz03}, free quantum group von Neumann algebras \cite{VV05} and free Araki-Woods factors \cite{Ho07}.

Part of the technology provided for proving the Main Theorem also enables us to prove the following characterization of solidity for free products with respect to arbitrary faithful normal states and over arbitrary index sets. It moreover generalizes the main result of \cite{GJ07}.

\begin{thm}\label{theorem-solidity}
Let $I$ be any nonempty set and $(M_i, \varphi_i)_{i \in I}$ be any family of von Neumann algebras endowed with any faithful normal states. Then, for the corresponding free product $(M, \varphi) = \ast_{i \in I} (M_i, \varphi_i)$, the free product von Neumann algebra $M$ is solid if and only if so are all $M_i$.
\end{thm}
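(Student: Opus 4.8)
The plan is to prove Theorem \ref{theorem-solidity} by establishing both implications separately, with the hard direction being that solidity of all the $M_i$ forces solidity of $M$. For the easy direction, suppose $M$ is solid; since each $M_i \subset M$ is with expectation (via the free product conditional expectation $\rE_{M_i}$), any diffuse von Neumann subalgebra $A \subset M_i$ with expectation is also a diffuse subalgebra of $M$ with expectation, so $A' \cap M$ is amenable. As $A' \cap M_i \subset A' \cap M$ is again with expectation, it inherits amenability, which shows $M_i$ is solid. (One first reduces to the diffuse summand in the usual way.)

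For the main direction, assume all $M_i$ are solid and let $A \subset 1_A M 1_A$ be any diffuse von Neumann subalgebra with expectation; I must show $A' \cap 1_A M 1_A$ is amenable. The strategy is to argue by contradiction: suppose $A' \cap 1_A M 1_A$ is \emph{not} amenable. Since $A$ is diffuse it is in particular a finite von Neumann algebra admitting a faithful tracial state on a diffuse piece, so Theorem \ref{thm-spectral-gap-general-AFP} applies (with $B = \C 1$, which is trivially amenable). That theorem then yields some $i \in I$ with $A \preceq_M M_i$. The point is now to transport the nonamenability of the relative commutant through this intertwining into $M_i$ and contradict solidity of $M_i$. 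Concretely, $A \preceq_M M_i$ produces $n \geq 1$, a projection $q \in \mathbf M_n(M_i)$, a nonzero partial isometry $v \in \mathbf M_{1,n}(1_A M)q$ and a unital normal $\ast$-homomorphism $\pi : A \to q \mathbf M_n(M_i) q$ with $av = v\pi(a)$ for all $a \in A$, and by Remark \ref{remark-intertwining} (1) together with Proposition \ref{prop-with-expectation} all the relevant corner inclusions are with expectation. Using \cite[Proposition 2.7]{HU15} one shows $vv^* \in A' \cap 1_A M 1_A$ and that $v^*(A' \cap 1_A M 1_A)v$ sits inside $v^*v\, \mathbf M_n(M_i)\, v^*v$ with expectation.

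The key structural input I would invoke is \cite[Proposition 2.7 (1)]{HU15}, which for free products computes relative commutants of subalgebras of a single free factor: because $v^*\pi(A)v = v^*Av$ is a diffuse subalgebra landing in $\mathbf M_n(M_i)$, its relative commutant inside $v^*v \mathbf M_n(M) v^*v$ is in fact contained in $v^*v \mathbf M_n(M_i) v^*v$. This forces the compression $v^*(A' \cap 1_A M 1_A)v$ to be a von Neumann subalgebra of a corner of $\mathbf M_n(M_i)$, namely of $v^*v\mathbf M_n(M_i)v^*v$, and it is precisely the relative commutant there of the diffuse algebra $v^*Av$. Since $M_i$ is solid and solidity passes to corners of matrix amplifications $p \mathbf M_n(M_i) p$, the relative commutant $(v^*Av)' \cap v^*v \mathbf M_n(M_i) v^*v$ is amenable. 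Transporting back via $\Ad(v)$ (a $\ast$-isomorphism onto $vv^*(A' \cap 1_A M 1_A)vv^*$, which is a corner of $A' \cap 1_A M 1_A$ with central support $vv^* \in A' \cap 1_A M 1_A$), we deduce that a corner of $A' \cap 1_A M 1_A$ with full central support is amenable; after cutting down the original nonamenable summand one reaches a contradiction.

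I expect the main obstacle to be the bookkeeping in the last step: ensuring that the compression $\Ad(v)$ really identifies $(v^*Av)' \cap v^*v\mathbf M_n(M_i)v^*v$ with a corner of $A' \cap 1_A M 1_A$ whose central support is large enough to contradict nonamenability. To handle this cleanly I would first cut $A' \cap 1_A M 1_A$ by a central projection $z$ so that $(A' \cap 1_A M 1_A)z$ has \emph{no} amenable direct summand, run the intertwining argument with this assumption in force (so that the partial isometry $v$ can be arranged with $vv^* \le z$, or at least with $vv^*$ meeting every central projection of the nonamenable part), and then observe that producing even a single nonzero amenable corner of an algebra with no amenable direct summand is already contradictory. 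The verification that $v^*v \mathbf M_n(M_i) v^*v$ is solid is routine given the remark on corners quoted before the theorem, and the expectation-preservation facts are exactly what Remark \ref{remark-intertwining} (1) supplies, so the genuine work is confined to this final matching of corners and central supports.
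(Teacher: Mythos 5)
Your proposal follows the same overall route as the paper (the easy direction is identical, and the hard direction is the paper's contradiction argument via Theorem \ref{thm-spectral-gap-general-AFP}, intertwining, \cite[Proposition 2.7]{HU15}, and solidity of corners of amplifications), but it contains one genuine gap at the very step that launches the hard direction. You write: ``Since $A$ is diffuse it is in particular a finite von Neumann algebra \dots, so Theorem \ref{thm-spectral-gap-general-AFP} applies.'' This implication is false: diffuse does not imply finite. Any type ${\rm III}$ factor and any type ${\rm II_\infty}$ factor is diffuse, and since solidity quantifies over \emph{all} diffuse subalgebras with expectation, the subalgebra $A$ you must handle can perfectly well be of type ${\rm III}$ (the free product $M$ here is generically type ${\rm III}$). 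Theorem \ref{thm-spectral-gap-general-AFP} --- and the intertwining machinery of Theorem \ref{theorem-intertwining} on which it rests --- genuinely requires $A$ to be a \emph{finite} von Neumann subalgebra with expectation, so your appeal to it fails for the algebras that are not already finite; as written, your argument only proves amenability of relative commutants of finite diffuse subalgebras, which is strictly weaker than solidity.

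The missing idea is a reduction step, and it is exactly how the paper opens the hard direction: given the diffuse subalgebra $Q \subset 1_Q M 1_Q$ with expectation and nonamenable relative commutant, one chooses (as in the proof of \cite[Lemma 2.1]{HU15}) a faithful state $\psi \in M_\ast$ with $1_Q \in M^\psi$, $Q$ globally invariant under $\sigma^{\psi_Q}$, and such that the centralizer $A := Q^{\psi_Q}$ is \emph{diffuse}. The centralizer is automatically finite (it carries the trace $\psi_Q$), it is with expectation, and the relative commutant only grows: $Q' \cap 1_Q M 1_Q \subset A' \cap 1_Q M 1_Q$, so nonamenability survives the replacement. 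With this finite diffuse $A$ in hand, the rest of your proposal --- cutting by a central projection $z \in \mathcal Z(A' \cap 1_A M 1_A)$ so that the relative commutant has no amenable direct summand, applying Theorem \ref{thm-spectral-gap-general-AFP} to get $A \preceq_M M_i$, using Remark \ref{remark-intertwining} (1) and \cite[Proposition 2.7]{HU15} to place the compressed relative commutant inside a corner of $\mathbf M_n(M_i)$ with expectation, and contradicting solidity of $p\mathbf M_n(M_i)p$ --- is precisely the paper's argument, and your handling of the central-support bookkeeping matches what the paper does.
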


\begin{proof} 
 (The only if part) Assume that some $M_i$ is not solid. By definition, there exist a nonzero projection $z \in \mathcal Z(M_i)$ and a diffuse von Neumann subalgebra $P \subset M_i z$ with expectation such that the relative commutant $P' \cap M_i z$ is nonamenable. Since the unital inclusion $P' \cap M_i z \subset zM_i z$ is with expectation so is the unital inclusion $P' \cap M_i z \subset z M z$. This implies that the unital inclusion $P' \cap M_i z \subset P' \cap zMz$ is with expectation and hence $P' \cap zMz$ is nonamenable. Therefore, $zMz$ is not solid and neither is $M$.  

(The if part) Assume that all $M_i$ are solid. Suppose on the contrary that $M$ is not solid. Then there exist a diffuse von Neumann subalgebra $Q \subset 1_Q M 1_Q$ with expectation such that the relative commutant $Q' \cap 1_Q M 1_Q$ is nonamenable. As in the proof of \cite[Lemma 2.1]{HU15}, choose a faithful state $\psi \in M_\ast$ such that $1_Q \in M^\psi$, $Q$ is globally invariant under the modular automorphism group $\sigma^{\psi_Q}$ where $\psi_Q = \frac{\psi(1_Q \cdot 1_Q)}{\psi(1_Q)}$ and $A := Q^{\psi_Q}$ is diffuse. Since $Q' \cap 1_Q M 1_Q \subset A' \cap 1_Q M 1_Q$ with $1_Q = 1_A$ is with expectation, $A' \cap 1_A M 1_A$ is also nonamenable. Up to cutting down by a suitable nonzero central projection $z \in \mathcal Z(A' \cap 1_A M 1_A)$, for which $(A' \cap 1_A M 1_A)z$ has no amenable direct summand and up to replacing $A$ with $Az$ (note that $z \in M^\psi$ and $Az \subset zMz$ is with expectation), we may further assume without loss of generality that the relative commutant $A' \cap 1_A M 1_A$ has no amenable direct summand. By Theorem \ref{thm-spectral-gap-general-AFP}, there exists $i \in I$ such that $A \preceq_M M_i$.

Then there exist $n \geq 1$, a projection $q \in \mathbf M_n(M_i)$, a nonzero partial isometry $w \in \mathbf M_{1, n}(1_A M)q$ and a unital normal $\ast$-homomorphism $\pi : A \to q\mathbf M_n(M_i)q$ such that the unital inclusion $\pi(A) \subset q\mathbf M_n(M_i)q$ is with expectation and $aw = w \pi(a)$ for all $a \in A$. By Remark \ref{remark-intertwining} (1) both of the inclusions $A ww^* \subset ww^* M ww^*$ and $\pi(A) w^*w \subset w^*w \mathbf M_n(M) w^*w$ are with expectation. Proceeding as in the proof of the Main Theorem (case (i)), we have $w^*w \in q\mathbf M_n(M_i)q$ and $w^* A w$ and $w^*(A' \cap 1_A M 1_A)w$ are commuting subalgebras of $w^*w\mathbf M_n(M_i)w^*w$ with expectation. Since $w^* A w$ is diffuse and $w^*(A' \cap 1_A M 1_A)w$ is not amenable, $w^*w\mathbf M_n(M_i)w^*w$ is not solid. This however contradicts the fact that $M_i$ is solid.
\end{proof}

The first part of the above proof actually shows that any von Neumann subalgebra of a solid von Neumann algebra with expectation must be solid. 

\begin{rem}
Recall that a tracial von Neumann algebra $M$ is {\em strongly solid} if for any amenable diffuse von Neumann subalgebra $A \subset M$, the normalizer $\mathcal N_M(A)\dpr$ is amenable. Using a combination of the proofs of Theorem \ref{theorem-solidity} and \cite[Theorem 1.8]{Io12} with Theorem \ref{thm-normalizer-general-AFP} (for tracial von Neumann algebras; see the remark after its proof) in place of Theorem \ref{thm-spectral-gap-general-AFP}, we can also show that a given tracial free product von Neumann algebra over an {\em arbitrary} index set is strongly solid if and only if so are all the component algebras. 

We point out that we can then obtain examples of strongly solid ${\rm II_1}$ factors that do not have the weak$^\ast$ completely bounded approximation property (CBAP). Indeed, for every $n \geq 1$, take a lattice $\Gamma_n < \Sp(n, 1)$ and denote by $(M, \tau) = \ast_{n \in \N\setminus \{0\}} (\rL(\Gamma_n), \tau_{\Gamma_n})$ the canonical tracial free product ${\rm II_1}$ factor. By \cite[Theorem B]{CS11} and the above fact, $M$ is a strongly solid ${\rm II_1}$ factor. Moreover, it follows from \cite{CH88} that $M$ does not have the weak$^\ast$ CBAP.
\end{rem}

\begin{rem}
Any diffuse solid von Neumann algebra $M$ with property Gamma (and with separable predual) is necessarily amenable. Indeed, by Theorem \ref{theorem-structure-gamma}, there exists a decreasing sequence $(A_n)_n$ of diffuse abelian von Neumann subalgebras of $M$ with expectation such that $M = \bigvee_{n \in\N}((A_n)' \cap M)$. By solidity, $(A_n)' \cap M$ is amenable for every $n \in \N$ and hence $M$ is amenable. 
\end{rem}

\appendix

\section{Normalizers inside semifinite AFP von Neumann algebras}\label{appendix}

\subsection*{Ozawa--Popa's relative amenability in the semifinite setting}

Let $(M, \Tr)$ be any semifinite $\sigma$-finite von Neumann algebra endowed with a faithful normal semifinite trace and $B \subset M$ any von Neumann subalgebra with trace preserving conditional expectation $\rE_B : M \to B$. Denote by $\langle M, B \rangle$ the basic extension associated with $\rE_B$ and by $e_B$ the canonical Jones projection. Then there exists a faithful normal semifinite operator-valued weight, called the {\em dual} operator-valued weight, $\widehat{\rE}_B : \langle M,B\rangle_+ \to \widehat{M}_+$ satisfying $\widehat{\rE}_B(e_B) = 1$ (see e.g.~\cite[\S2.1]{ILP96}). Moreover, the linear span of $Me_B M$ forms a $\sigma$-strongly dense $\ast$-subalgebra of $\langle M, B \rangle$ and $\sigma_t^{\Tr\circ\widehat{\rE}_B}(e_B) = e_B$ for all $t \in \mathbf{R}$. Thus, $\Tr_{ \langle M,B\rangle}:= \Tr\circ\widehat{\rE}_B$ becomes a faithful normal semifinite trace on $\langle M, B\rangle$. 

\begin{thm}\label{T1} {\rm(\cite[Theorem 2.1]{OP07})} Let $p \in M$ be any nonzero projection with $\Tr(p) < +\infty$ and $A \subset pMp$ any von Neumann subalgebra. Write $\tau := \frac{1}{\Tr(p)}\Tr|_{pMp}$. The following conditions are equivalent: 
\begin{enumerate}
\item There exists an $A$-central state $\varphi$ on $p\langle M, B\rangle p$ such that $\varphi|_{pMp} = \tau$. 
\item There exists an $A$-central state $\varphi$ on $p\langle M, B\rangle p$ such that $\varphi|_{pMp}$ is normal and such that $\varphi|_{\mathcal{Z}(A'\cap pMp)}$ is faithful. 
\item There exists a conditional expectation $\Phi : p\langle M, B\rangle p \to A$ such that $\Phi|_{pMp}$ gives  the unique $\tau$-preserving conditional expectation from $pMp$ onto $A$. 
\item There exists a net $(\xi_i)_{i\in I}$ of vectors in $\rL^2(\langle M, B\rangle, \Tr_{\langle M, B\rangle})$ such that 
\begin{itemize}
\item $p\xi_i p = \xi_i$ for all $i \in I$,
\item $\lim_i \langle x\xi_i, \xi_i \rangle_{\Tr_{\langle M, B\rangle}} = \tau(x)$ for all $x \in pMp$ and
\item  $\lim_i \Vert a\xi_i - \xi_i a\Vert_{2, \Tr_{\langle M, B\rangle}} = 0$ for all $a \in A$. 
\end{itemize}
\end{enumerate}
We will say that \emph{$A$ is amenable relative to $B$ inside $M$} if one of the above equivalent conditions holds. 
\end{thm}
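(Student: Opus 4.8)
The final statement to prove is Theorem~\ref{T1}, which is Ozawa--Popa's characterization of relative amenability transported to the semifinite setting. The plan is to follow the structure of the original tracial argument of \cite[Theorem 2.1]{OP07}, replacing the finite trace throughout by the fixed faithful normal semifinite trace $\Tr_{\langle M, B\rangle}$ on the basic extension and using that $p$ has finite trace so that $pMp$ carries the genuine normalized trace $\tau$. First I would establish the implication (4) $\Rightarrow$ (1): given a net $(\xi_i)$ as in (4), each $\xi_i$ defines a normal state $\omega_i(x) = \langle x\xi_i, \xi_i\rangle_{\Tr_{\langle M, B\rangle}}$ on $p\langle M, B\rangle p$, and a weak-$\ast$ cluster point $\varphi$ of this net is a state whose restriction to $pMp$ equals $\tau$ (by the second bullet) and which is $A$-central (by the third bullet, since $\|a\xi_i - \xi_i a\|_{2}\to 0$ forces the asymptotic commutation of $a$ with $\omega_i$). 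This is the most routine direction.

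Next I would treat (1) $\Rightarrow$ (3). Starting from an $A$-central state $\varphi$ on $p\langle M, B\rangle p$ with $\varphi|_{pMp} = \tau$, the key is to produce an honest conditional expectation onto $A$. Here I would invoke the averaging/Hahn--Banach machinery: $A$-centrality means $\varphi$ is invariant under $\Ad(u)$ for all $u \in \mathcal U(A)$, and one uses the module structure together with the fact that $\tau$ is a trace on $pMp$ to disintegrate $\varphi$ into a $pMp$-bimodule map. The cleanest route is to observe that $\varphi$ gives a norm-one projection $p\langle M, B\rangle p \to A$ that restricts to a conditional expectation on $pMp$; because $A \subset pMp$ is with expectation (relative to the trace $\tau$, using that $pMp$ is finite) one identifies $\Phi|_{pMp}$ with the unique $\tau$-preserving expectation. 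The equivalence (3) $\Leftrightarrow$ (4) in the other direction, and (1) $\Leftrightarrow$ (2), I would handle by the standard interplay between $A$-central states and vectors in $\rL^2(\langle M, B\rangle)$: condition (4) produces vectors, a convexity/closed-convex-hull argument in $\rL^2$ (Namioka-type, moving from weak to strong closure) upgrades a central state whose $pMp$-restriction is normal to an actual almost-central net of vectors, and the faithfulness clause in (2) is arranged by cutting with the support projection in $\mathcal Z(A'\cap pMp)$.

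The genuinely semifinite ingredients that require care, and where I expect the main obstacle to lie, are twofold. First, one must verify that the dual operator-valued weight $\widehat{\rE}_B$ and the resulting trace $\Tr_{\langle M, B\rangle} = \Tr\circ\widehat{\rE}_B$ behave correctly: in the tracial case of \cite{OP07} the basic extension automatically carries a trace, whereas here this is exactly the content of the paragraph preceding the theorem, and the finiteness of $\Tr(p)$ guarantees that $p\langle M, B\rangle p$ is $\sigma$-finite with $p$ of finite $\Tr_{\langle M, B\rangle}$-weight only after checking that $p e_B p$ is finite and that $\rL^2(\langle M, B\rangle, \Tr_{\langle M, B\rangle})$ furnishes the correct $pMp$-bimodule. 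Second, the passage between states and $\rL^2$-vectors (the step underlying (1)/(2) $\Rightarrow$ (4)) relies on a weak-to-strong convexity argument that in the tracial setting uses the canonical trace vector; in the semifinite setting I would instead work directly with the Hilbert space $\rL^2(\langle M, B\rangle, \Tr_{\langle M, B\rangle})$ and its modular structure, taking convex combinations of the form $\sum_k a_k \xi a_k^\ast$ and applying the Powers--Størmer inequality to control $\|\cdot\|_2$-norms. The modular automorphism group fixing $e_B$ (noted as $\sigma_t^{\Tr\circ\widehat{\rE}_B}(e_B) = e_B$) is what keeps these manipulations inside the trace framework, and I expect the careful bookkeeping of $\Tr$-finiteness of the relevant projections to be the delicate point rather than any deep new idea.
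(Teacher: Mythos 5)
Your proposal follows essentially the same route as the paper: the paper's entire proof consists of observing that $\rL^2(p\langle M, B\rangle p, \Tr_{\langle M, B\rangle}|_{p\langle M, B\rangle p})$ is naturally identified with $p \cdot \rL^2(\langle M, B\rangle, \Tr_{\langle M, B\rangle}) \cdot p$ as $pMp$-$pMp$-bimodules, after which the proof of \cite[Theorem 2.1]{OP07} applies \emph{mutatis mutandis} --- precisely the adaptation you outline (vector states, weak-$\ast$ cluster points, averaging, Powers--St{\o}rmer/convexity arguments in the corner of the $\rL^2$-space). One correction: your remark that one must verify that $p$ has finite $\Tr_{\langle M, B\rangle}$-weight is both false in general (for $B = \C 1$ and $M$ a ${\rm II_1}$ factor one has $\langle M, B\rangle = \mathbf B(\rL^2(M))$ and $\Tr_{\langle M, B\rangle}(p) = \infty$ for every nonzero $p$) and unnecessary: what the argument actually uses is that $\Tr_{\langle M, B\rangle}(x e_B y) = \Tr(xy)$ for $x, y \in M$ together with the corner bimodule identification above, which guarantee that vectors $\xi$ with $p\xi p = \xi$ yield normal states on $p\langle M, B\rangle p$ whose restrictions to $pMp$ are normal.
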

\begin{proof}
Observe that we have a natural identification of $\rL^2(p\langle M,B\rangle p, \Tr_{\langle M, B\rangle}|_{p\langle M,B\rangle p})$ with $p\cdot \rL^2(\langle M, B\rangle,\Tr_{\langle M, B\rangle}) \cdot p$ as $pMp$-$pMp$-bimodules. Then the proof of \cite[Theorem 2.1]{OP07} applies {\em mutatis mutandis}.  
\end{proof}

\begin{lem}\label{L2} {\rm(\cite[Corollary 2.3]{OP07} and \cite[Lemma 2.3]{Io12})} Let $p \in M$ be any nonzero projection with $\Tr(p) < +\infty$ and $A \subset pMp$ any von Neumann subalgebra. Let $\mathcal{L}$ be any $B$-$M$-bimodule. Assume that there exists a net $(\xi_i)_{i \in I}$ of vectors in $p\mathcal{H}$ with $\mathcal{H} := \rL^2(M,\Tr)\otimes_B\mathcal{L}$ such that the following conditions hold:
\begin{itemize}
\item $\limsup_i \Vert x\xi_i\Vert_{\mathcal{H}} \leq \Vert x\Vert_{2, \tau}$ for all $x \in pMp$, 
\item $\limsup_i \Vert\xi_i\Vert_{\mathcal{H}} > 0$ and 
\item $\lim_i \Vert a\xi_i - \xi_i a\Vert_{\mathcal{H}} = 0$ for all $a \in A$. 
\end{itemize}
Then there exists a nonzero projection $z \in \mathcal{Z}(A' \cap pMp)$ such that $Az$ is amenable relative to $B$ inside $M$.
\end{lem}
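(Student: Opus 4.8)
The plan is to reduce everything to the characterization of relative amenability furnished by Theorem \ref{T1}, by manufacturing an $A$-central state on a corner of $\langle M, B\rangle$ out of the given net $(\xi_i)$. The conceptual key is that the left $M$-module structure on $\mathcal{H} = \rL^2(M,\Tr) \otimes_B \mathcal{L}$ extends to a normal left $\langle M, B\rangle$-module structure. Indeed, realizing $\langle M, B\rangle \subset \mathbf{B}(\rL^2(M,\Tr))$ as the commutant of the right $B$-action (that is, $\langle M, B\rangle = (JBJ)' \cap \mathbf{B}(\rL^2(M,\Tr))$), every $T \in \langle M, B\rangle$ is right $B$-linear on $\rL^2(M,\Tr)$ and hence $T \otimes_B 1_{\mathcal{L}}$ descends to a bounded operator on the balanced tensor product $\mathcal{H}$. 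This yields a normal $\ast$-representation $\pi : \langle M, B\rangle \to \mathbf{B}(\mathcal{H})$ restricting to the left $M$-action, and crucially $\pi(\langle M, B\rangle)$ commutes with the right $M$-action on $\mathcal{H}$ (which lives on the $\mathcal{L}$-factor).

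First I would define functionals $\omega_i(T) = \langle \pi(T)\xi_i, \xi_i\rangle_{\mathcal{H}}$ on $p\langle M, B\rangle p$; these are positive and, taking $x = p$ in the first hypothesis, satisfy $\limsup_i \|\xi_i\|_{\mathcal{H}} \leq \|p\|_{2,\tau} = 1$, so they are uniformly bounded. Passing to a subnet with $\|\xi_i\|_{\mathcal{H}} \to c \in (0,1]$ (possible by the second hypothesis) and extracting a weak-$\ast$ cluster point $\omega$ of $(\omega_i)$, I would check the three properties needed. For $A$-centrality, given $a \in A$ and $T \in p\langle M, B\rangle p$, I write $\pi(T)(\xi_i a) = (\pi(T)\xi_i)a$ using the commutation of $\pi(\langle M, B\rangle)$ with the right $M$-action, and then replace the right action of $a$ by the left action at the cost of $\|a\xi_i - \xi_i a\|_{\mathcal{H}} \to 0$ (applied to $a$ and to $a^*$); this gives $\omega(\pi(a)T) = \omega(T\pi(a))$. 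For the restriction to $pMp$, the first hypothesis gives $|\omega(x)| \leq \limsup_i \|x\xi_i\|_{\mathcal{H}}\,\limsup_i\|\xi_i\|_{\mathcal{H}} \leq \|x\|_{2,\tau}$ for all $x \in pMp$, so $\omega|_{pMp}$ is $\|\cdot\|_{2,\tau}$-continuous, hence normal; moreover $\omega(p) = c^2 > 0$, so $\omega|_{pMp} \neq 0$.

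Next I would locate the central support. Since $p = 1_{pMp}$ lies in $\mathcal{Z}(A' \cap pMp)$ and $\omega(p) > 0$, the normal positive functional $\omega|_{\mathcal{Z}(A' \cap pMp)}$ on the abelian algebra $\mathcal{Z}(A' \cap pMp)$ has a nonzero support projection $z \in \mathcal{Z}(A' \cap pMp)$, with $z \leq p$ so $\Tr(z) \leq \Tr(p) < +\infty$. Using $z \in \mathcal{Z}(A' \cap pMp)$ one has the standard identifications $(Az)' \cap zMz = (A' \cap pMp)z$ and $\mathcal{Z}((Az)' \cap zMz) = \mathcal{Z}(A' \cap pMp)z$. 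I would then set $\varphi := \omega(z)^{-1}\,\omega(z \cdot z)$ on $z\langle M, B\rangle z$; the $A$-centrality of $\omega$ together with $z \in A' \cap pMp$ makes $\varphi$ an $Az$-central state, its restriction to $zMz \subset pMp$ is normal, and by the choice of $z$ as a support its restriction to $\mathcal{Z}((Az)' \cap zMz)$ is faithful. This is exactly condition (2) of Theorem \ref{T1} (with $p$, $A$ replaced by $z$, $Az$), so $Az$ is amenable relative to $B$ inside $M$, as desired.

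The routine points are the bimodule bookkeeping (that $\pi$ is well defined and normal, and commutes with the right $M$-action) and the limit estimates, which are standard. The main obstacle is conceptual rather than computational: setting up the left $\langle M, B\rangle$-module structure on the abstractly given bimodule $\mathcal{H} = \rL^2(M,\Tr) \otimes_B \mathcal{L}$ and verifying that the semifinite trace $\Tr_{\langle M, B\rangle}$ and the finiteness $\Tr(z) < +\infty$ are compatible with the hypotheses of Theorem \ref{T1}; once this is in place the argument is the semifinite transcription of the Ozawa--Popa averaging scheme, so no genuinely new difficulty arises from non-finiteness or non-$\sigma$-finiteness of $\langle M, B\rangle$ beyond replacing the finite trace by $\tau = \frac{1}{\Tr(p)}\Tr|_{pMp}$ throughout.
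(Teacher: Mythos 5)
Your proposal is correct and takes essentially the same route as the paper: the paper's own proof consists precisely of observing that $\langle M,B\rangle = (J^M B J^M)' \cap \mathbf{B}(\rL^2(M))$ acts naturally on $\mathcal{H} = \rL^2(M,\Tr)\otimes_B\mathcal{L}$ (commuting with the right $M$-action) and then invoking the argument of \cite[Lemma 2.3]{Io12} \emph{mutatis mutandis} to produce condition (2) of Theorem \ref{T1}. Your write-up simply carries out that transcription in full detail --- the functionals $\omega_i$, the weak$^*$ cluster point, normality of the restriction to $pMp$, $A$-centrality, and the support projection $z \in \mathcal{Z}(A' \cap pMp)$ --- exactly as the cited proof does.
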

\begin{proof}
Observe that $\langle M,B \rangle = (J^M B J^M)' \cap \mathbf B(\rL^2(M))$ also acts naturally on $\mathcal{H}$ in this semifinite setting, where $J^M$ is the modular conjugation on the standard form $\rL^2(M)$. Then the proof of \cite[Lemma 2.3]{Io12} applies {\em mutatis mutandis} to obtain item (2) in Theorem \ref{T1}.   
\end{proof}

\subsection*{Vaes's dichotomy result in the semifinite setting} 
 
Let $(M, \rE) = (M_1, \rE_1) \ast_B (M_2, \rE_2)$ be any {\em semifinite} amalgamated free product von Neumann algebra endowed with a faithful normal semifinite trace $\Tr$ such that $\Tr\circ \rE = \Tr$. Let $q \in B$ be any nonzero projection such that $\Tr(q) < +\infty$. Up to replacing $\Tr$ with $\frac{1}{\Tr(q)}\Tr$ if necessary, we may and will assume that $\Tr(q) = 1$.

Denote by $\mathbf{F}_2 = \langle \gamma_1,\gamma_2 \rangle$  the free group on two generators and put 
\begin{align*}
(\widetilde{M},\widetilde{\rE}) &= (M, \rE) \ast_B (B \ovt \rL(\mathbf{F}_2),\id \otimes \tau_{\mathbf{F}_2}), \\
(\widetilde{M}_i,\widetilde{\rE}_i) &= (M_i, \rE_i) \ast_B (B\ovt \rL(\langle\gamma_i\rangle),\id \otimes\tau_{\langle \gamma_i\rangle}), \quad i\in \{1,2\}.
\end{align*}
Denote by $\mathbf{F}_2  \to \rL(\mathbf{F}_2) : \gamma \mapsto \lambda_\gamma$ the canonical unitary representation and regard $\rL(\F_2) \cong \mathbf{C}1_B\otimes \rL(\mathbf{F}_2) \subset \widetilde{M}$. Then we can naturally identify $(\widetilde{M},\widetilde{\rE}) = (\widetilde{M}_1,\widetilde{\rE}_1) \ast_B (\widetilde{M}_2,\widetilde{\rE}_2)$. Following \cite[\S2]{IPP05}, we can construct a $1$-parameter unitary group $u_i^t$ in $\rL(\langle \gamma_i\rangle) \subset \widetilde{M}_i \subset \widetilde{M}$ such that $u_i^1 = \lambda_{\gamma_i}$ and $\tau_{\langle \gamma_i \rangle}(u_i^t) = \frac{\sin(\pi t)}{\pi t}$ for all $t \in \R$. 

 Fix an arbitrary faithful state $\chi \in B_\ast$. Then $\sigma_t^{\chi\circ\widetilde{\rE}_i} = \sigma_t^{\chi\circ \rE_i} \ast (\sigma_t^\chi \otimes \id)$ (see \cite[Theorem 2.6]{Ue98a}) and hence $u_i^t$ lies in the centralizer of $\chi\circ\widetilde{\rE}_i$ for all $t \in \R$. Therefore, we have $\chi\circ\widetilde{\rE}_i = (\chi\circ\widetilde{\rE}_i)\circ\Ad(u_i^t)$, implying that $\widetilde{\rE}_i = \widetilde{\rE}_i\circ\Ad(u_i^t)$ for all $t \in \R$. Consequently, $\theta_t := \Ad(u_1^t) \ast \Ad(u_2^t) \in \Aut(\widetilde{M})$ is well-defined for all $t \in \R$. A similar consideration shows that $\sigma_t^{\Tr_B\circ \widetilde{\rE}} = \sigma_t^{\Tr} \ast (\sigma_t^{\Tr_B} \otimes \id) = \id$ with $\Tr_B := \Tr|_B$ so that $\widetilde{\Tr} := \Tr_B\circ\widetilde{\rE}$ gives a faithful normal semifinite trace on $\widetilde{M}$ extending $\Tr$ naturally. The triple $(M \subset \widetilde{M}, \widetilde{\Tr}, \theta_t)$ is the semifinite analogue of Popa's malleable deformation for {\em tracial} amalgamated free product von Neumann algebras as defined in \cite[\S2]{IPP05}. The basic inequalities such as \cite[Eq.(3.1),(3.2)]{Va13} hold true as they are (see e.g.\ \cite[\S3.1]{BHR12} with the necessary refinement along \cite[\S3.1]{Va13}). Observe that $\theta_t(q) = q$ for every $t \in \mathbf{R}$ so that $\theta_t(p) \leq q$ for every projection $p \in qMq$ and every $t \in \R$.

Recall that the key observation of \cite{Io12} is that the von Neumann algebra $N := \bigvee_{\gamma \in \mathbf{F}_2} \lambda_{\gamma}M\lambda_{\gamma}^*$ is identified with the amalgamated free product of infinitely many copies of $(M,\rE)$ over $\mathbf F_2$ as index set and that $\widetilde{M}$ admits the crossed product decomposition $\widetilde{M} = N \rtimes_{\Ad(\lambda)}\mathbf{F}_2$ whose canonical conditional expectation is denoted by $\rE_N : \widetilde M \to N$. Moreover, $q\widetilde{M}q = qNq\rtimes_{\Ad(\lambda)}\mathbf{F}_2$ holds naturally and the canonical conditional expectation $\rE_{qNq} : q\widetilde{M}q \to qNq$ coincides with the restriction of $\rE_N : \widetilde{M} \to N$ to $q N q$ since $q \in B \subset N \subset \widetilde{M}$ and thus $[\lambda_{\gamma},q] = 0$ for all $\gamma \in \mathbf{F}_2$. 

\begin{thm}[{\cite[Theorem 3.2]{Va13}}]\label{T3} Let $p \in qMq$ be any nonzero projection and $A \subset pMp$ any  von Neumann subalgebra. Assume that for all $t \in (0,1)$, $\theta_t(A)$ is amenable relative to $qNq$ inside $q\widetilde{M}q$. Then at least one of the following conditions holds: 
\begin{itemize}
\item Either $A \preceq_M M_1$ or $A \preceq_M M_2$ holds. 
\item $A$ is amenable relative to $B$ inside $M$. 
\end{itemize}
\end{thm}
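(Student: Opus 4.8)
The plan is to transpose Vaes's proof of \cite[Theorem 3.2]{Va13} to the present semifinite amalgamated setting, using the malleable deformation $(\theta_t)$ constructed above, the transversality inequalities \cite[Eq.(3.1),(3.2)]{Va13} (which, as explained, hold verbatim here), and the relative amenability criteria of Theorem \ref{T1} and Lemma \ref{L2}. First I would argue by contraposition: assuming $A \npreceq_M M_1$ and $A \npreceq_M M_2$, the goal becomes to prove that $A$ is amenable relative to $B$ inside $M$. I would record at the outset that both hypotheses are stable under cutting down by a projection $z \in \mathcal Z(A' \cap pMp)$ (intertwining of a corner would imply intertwining of $A$, while relative amenability of $\theta_t(A)$ relative to $qNq$ restricts to $\theta_t(Az)$); this will be used at the very end through a maximality argument. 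The structural input I would isolate is that, as an $M$-$M$-bimodule, the "moved part" $\rL^2(\widetilde M) \ominus \rL^2(M)$ decomposes---via the amalgamated free product computation already used in Proposition \ref{proposition-infinite-AFP}---into reduced-word bimodules, each containing at least one letter from $(B \ovt \rL(\F_2)) \ominus B$, and hence each isomorphic to a bimodule of the form $\rL^2(M) \otimes_B \mathcal L$ for a suitable $B$-$M$-bimodule $\mathcal L$. This is precisely the shape required by Lemma \ref{L2}, and importantly it uses no amenability of $B$.

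Next I would transport the hypothesis through the deformation. Since $\theta_t(A)$ is amenable relative to $qNq$ inside $q\widetilde M q$, Theorem \ref{T1} (4) furnishes, for each $t \in (0,1)$, a net of $(\theta_t(A))$-almost-central, correctly normalized vectors in $\rL^2(\langle q\widetilde M q, qNq\rangle)$; applying $\theta_{-t}$ produces $A$-almost-central vectors $\xi_i^{(t)}$ for the $\theta_{-t}(qNq)$-relative structure. Using the transversality inequalities together with the facts that $\theta_t$ fixes $B$ pointwise and $\theta_t \to \id$ as $t \to 0$, I would then run the standard alternative. Either the deformation converges uniformly on $\Ball(A)$ as $t \to 0$---the rigid case---in which case the word-length argument underlying the deformation-rigidity step of Theorem \ref{thm-relative-property-T-semifinite-AFP} (namely \cite[Theorem 3.3]{BHR12}, valid for semifinite amalgamated free products of finitely many algebras) forces $A \preceq_M M_1$ or $A \preceq_M M_2$, contradicting our standing assumption; or the convergence is not uniform, and the transversality estimates instead convert the $\xi_i^{(t)}$ into vectors whose mass concentrates in the moved part $\rL^2(\widetilde M) \ominus \rL^2(M)$.

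In this remaining non-rigid case I would, by the first paragraph, view that surviving mass inside a bimodule of the form $\rL^2(M) \otimes_B \mathcal L$, on which the net remains asymptotically $A$-central with the correct normalization. The crucial step is to discard the residual "diagonal" contributions over $M$: here I would invoke the non-intertwining hypothesis through Theorem \ref{theorem-intertwining} (condition (3)), producing nets of unitaries $(w_j)$ in $\mathcal U(A)$ whose conditional expectations into $M_1$ and $M_2$ tend to $0$ $\sigma$-strongly, and combine this with Lemma \ref{lemma-control-sequence} to annihilate the unwanted cross terms. What survives is a net of $A$-almost-central, correctly normalized vectors in a bimodule $\rL^2(M) \otimes_B \mathcal L$, to which Lemma \ref{L2} applies and yields a nonzero projection $z \in \mathcal Z(A' \cap pMp)$ with $Az$ amenable relative to $B$. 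A maximality/exhaustion argument over orthogonal central projections---legitimate by the cut-down stability noted in the first paragraph---then upgrades this to $A$ itself being amenable relative to $B$.

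The main obstacle will be the analytic core of the middle two paragraphs: verifying that Popa's transversality and the deformation-rigidity-to-intertwining implication go through with the trace $\widetilde\Tr$ and the finite-trace cutoff $q$, and, above all, making the "kill the diagonal over $M$" step precise in the semifinite amalgamated setting. This is where the interplay between Theorem \ref{theorem-intertwining} (3), Lemma \ref{lemma-control-sequence}, and the operator-valued-weight normalization of $\rL^2(\langle \widetilde M, N\rangle)$ must be handled with care, paralleling the tracial computations of \cite{Va13} while tracking finite-trace projections throughout.
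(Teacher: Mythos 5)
Your proposal assembles the right ingredients (transversality, Lemma \ref{L2}, the bimodule decomposition of the moved part, the IPP-type theorem \cite[Theorem 3.3]{BHR12}, the cut-down/maximality reduction), but the logical architecture is inverted, and the step carrying all the weight of your ``non-rigid case'' is invalid. The only source of almost-central vectors is the hypothesis that $\theta_t(A)$ is amenable relative to $qNq$, and by Theorem \ref{T1} these vectors live in the basic construction $\rL^2(\langle q\widetilde{M}q,qNq\rangle)$, not in $\rL^2(\widetilde{M})$; Popa's transversality inequalities control elements $\theta_t(a) \in \widetilde{M}$ and provide no mechanism whatsoever for pushing basic-construction vectors into the moved part $\rL^2(\widetilde{M})\ominus\rL^2(M)$. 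What you would actually need is: if $\theta_t$ does \emph{not} converge uniformly on $\Ball(A)$, then the mass of the $\xi_i^{(t)}$ escapes the subspace $\mathcal{K}$ generated by $\{x\lambda_\gamma e_{qNq}\lambda_\gamma^* : x \in qMq,\ \gamma \in \F_2\}$. This is precisely the contrapositive of the hard implication of Vaes's proof (mass in $\mathcal{K}$ forces uniform convergence), whose verification occupies \cite[pp 704--709]{Va13} and requires Ioana's key formula \cite[Lemma 3.2]{Va13} together with the word-length analysis over $\F_2$; it is not a consequence of ``transversality estimates.'' Your non-rigid case therefore silently assumes the core of the theorem.

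The proposed repair --- killing the ``residual diagonal contributions'' with unitaries from Theorem \ref{theorem-intertwining}(3) and Lemma \ref{lemma-control-sequence} --- cannot work even in principle. Via the isometry $U$ of the paper's proof, $\mathcal{K}$ is isomorphic as a $qMq$-$qMq$-bimodule to $\rL^2(qMq)\otimes\ell^2(\F_2)$, a multiple of the identity bimodule: the vectors $p\otimes\delta_\gamma$ are \emph{exactly} $A$-central and tracial, so mass in $\mathcal{K}$ is fully compatible with $A \npreceq_M M_1$ and $A \npreceq_M M_2$ (and indeed is what happens whenever the first alternative of the theorem is the one realized). No choice of unitaries in $\mathcal{U}(A)$ with vanishing expectations onto $M_1$, $M_2$ annihilates anything in a multiple of the identity bimodule. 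This is why the paper (following \cite{Va13}) runs the argument the other way: assuming no intertwining \emph{and} that no corner of $A$ is amenable relative to $B$, it builds a single net $(\xi_i)$ over a directed set incorporating $t\to 0$, applies Lemma \ref{L2} to the orthogonal complement of $\mathcal{K}$ (identified, as in \cite[Lemma 4.2]{Io12}, with $q\cdot(\rL^2(M)\otimes_B\mathcal{L})\cdot q$) to force $\lim_i\|p\xi_i - ep\xi_i\|_{2,\Tr}=0$, transports by $U$ into $\rL^2(qMq)\otimes\ell^2(\F_2)$, and only then extracts uniform convergence via Vaes's analysis (with the three semifinite substitutions recorded in the proof, notably \cite[Theorem 3.3]{BHR12} in place of \cite[Theorem 3.1]{Va13}), reaching a contradiction. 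Your outer framework matches this, but the middle of your argument --- the dichotomy on uniform convergence and the annihilation of the diagonal --- has no valid path and must be replaced by the $\mathcal{K}$-versus-complement dichotomy plus the deformation analysis.
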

\begin{proof} The proof is  identical to the one of \cite[Theorem 3.2]{Va13} with only minor modifications. This is why we will only sketch it. The most essential part of Vaes's proof is done at the Hilbert space level and hence it suffices to explain how to provide the right framework to modify the proof accordingly.

The functional $\tau := \widetilde{\Tr}|_{q\widetilde{M}q}$ defines a faithful normal tracial state on $q\widetilde{M}q = qNq\rtimes_{\Ad(\lambda)}\mathbf{F}_2$, since $\Tr(q) = 1$. Denote by $\langle q\widetilde{M}q, qNq \rangle$ the basic extension of $q\widetilde{M}q$ by $\rE_{qNq} : q \widetilde M q \to q N q$ with Jones projection $e_{qNq}$. To simplify the notation, we will simply write $\Tr := \tau\circ\widehat{\rE}_{qNq}$ where 
$$\widehat{\rE}_{qNq} : \langle q\widetilde{M}q, qNq \rangle_+ \to \widehat{q\widetilde{M}q}_+$$ is the dual faithful normal semifinite operator-valued weight satisfying $\widehat{\rE}_{qNq}(e_{qNq}) = 1_{qNq} = q$.

Let $I$ be the set of all the quadruplets $i = (X,Y,\delta,t)$ with finite subsets $X \subset q\widetilde{M}q$ and $Y \subset \mathcal U(A)$, $0 < \delta <1$ and $0 < t < 1$. The set $I$ becomes a directed set with the order relation $(X,Y,\delta,t) \leq (X',Y',\delta',t')$ defined by $X \subset X'$, $Y \subset Y'$, $\delta \geq \delta'$ and $t \geq t'$. Since $\theta_t(A)$ is amenable relative to $qNq$ inside $q\widetilde{M}q$, for each $i = (X,Y,\delta,t) \in I$, \cite[Theorem 2.1 and the remark following it]{OP07} enables us to find a vector $\xi_i \in \rL^2(\langle q\widetilde{M}q,qNq\rangle)$ in such a way that $\Vert \xi_i\Vert_{2, \Tr} \leq 1$, 
\begin{align*}
|\langle x\xi_i,\xi_i\rangle_{\Tr} - \tau(x)| 
&\leq \delta \quad \text{for all $x \in X\cup\{(\theta_t(y)-y)^*(\theta_t(y)-y)\mid y \in Y\}$}, \\ 
\Vert \theta_t(y)\xi_i - \xi_i \theta_t(y)\Vert_{2, \Tr} 
&\leq \delta \quad 
\text{for all $y \in Y$}. 
\end{align*} 
Observe that $\lim_i \langle x\xi_i, \xi_i\rangle _{\Tr} = \tau(x)$ for all $x \in q\widetilde{M}q$ and $\lim_i \Vert y\xi_i - \xi_i y\Vert_{2, \Tr} = 0$ for all $y \in A$. 

Denote by $\mathcal{K} \subset \rL^2(\langle q\widetilde{M}q,qNq \rangle)$ the closed linear subspace generated by $\{ x \lambda_\gamma e_{qNq}\lambda_\gamma^*\mid x \in qMq, \gamma \in \mathbf{F}_2\}$ and by $e : \rL^2(\langle q\widetilde{M}q,qNq \rangle) \to \mathcal K$ the orthogonal projection. Note that $e \in (qMq)' \cap \mathbf B(\rL^2(q \widetilde M q))$. Thus, the net $\xi'_i := p(1-e)\xi_i$ satisfies $\limsup_i \Vert x\xi'_i\Vert_{2, \Tr} \leq \Vert x \Vert_{2, \tau}$ for all $x \in pMp$ and $\lim_i \Vert a \xi_i' - \xi'_i a\Vert_{2, \Tr} = 0$ for all $a \in A$. 

Suppose that $A \not\preceq_M M_1$ and $A \not\preceq_M M_2$. What we have to show is that $A$ is amenable relative to $B$ inside $M$. By contradiction and proceeding as in the first paragraph of the proof of \cite[Theorem 3.2]{Va13}, we may and do assume that {\em no corner of $A$ is amenable relative to $B$ inside $M$}, that is, $Az$ is not amenable relative to $B$ inside $M$ for any nonzero projection $z \in \mathcal Z(A' \cap pMp)$.

Observe that $\langle q\widetilde{M}q,qNq\rangle = q\langle \widetilde{M},N \rangle q$ with $e_{qNq} = qe_N$ ($= e_N q$), where $\langle \widetilde{M},N\rangle$ is the basic extension of $\widetilde{M}$ by the canonical trace preserving conditional expectation $\rE_N : \widetilde M \to N$ and also that the traces $\Tr$ on $\langle q\widetilde{M}q,qNq\rangle$ and $\widetilde{\Tr}\circ\widehat{\rE}_N$ on $\langle \widetilde{M},N\rangle$ with the dual operator-valued weight $\widehat{\rE}_N$ agree since $\widehat{\rE}_N(qe_N) = q\widehat{\rE}_N(e_N) = q$. (It is then natural to denote the latter trace by the same symbol $\Tr$.) Thus, $\rL^2(\langle q\widetilde{M}q,qNq\rangle)$ can be identified with $q\cdot \rL^2(\langle\widetilde{M},N\rangle)\cdot q$. If we identify $M$ with the $\gamma$th free product component $\lambda_\gamma M\lambda_\gamma^*$, then we have the decomposition $\rL^2(N) = \rL^2(M)\oplus(\rL^2(M)\otimes_B \mathcal{X}\otimes_B \rL^2(M))$ as $M$-$M$-bimodules for some $B$-$B$-bimodule $\mathcal{X}$ (see \cite[\S2]{Ue98a}). Then we see, in the same way as in the proof of \cite[Lemma 4.2]{Io12}, that $\rL^2(\langle q\widetilde{M}q,qNq \rangle)\ominus\mathcal{K}$ is identified, as $qMq$-$qMq$-bimodule, with $q\cdot(\rL^2(M)\otimes_B \mathcal{L})\cdot q \subset \rL^2(M)\otimes_B \mathcal{L}$ for some $B$-$M$-bimodule $\mathcal{L}$. Thus, Lemma \ref{L2} implies that $\lim_i \Vert\xi'_i\Vert_{2, \Tr} = 0$, namely $\lim_i \Vert p\xi_i - ep\xi_i\Vert_{2, \Tr} = 0$.

As in the proof of \cite[Theorem 3.4]{Va13}, we can construct an isometry $U : \rL^2(qMq)\otimes\ell^2(\mathbf{F}_2) \to \rL^2(\langle q\widetilde{M}q,qNq\rangle)$ in such a way that $UU^* = e$ and that $U((x\otimes1)\eta(y\otimes1)) = x(U\eta)y$ for all $x,y \in qMq$ and all $\eta \in \rL^2(qMq)\otimes\ell^2(\mathbf{F}_2)$. Put $\zeta_i := U^* p\xi_i \in p\rL^2(qMq)\otimes\ell^2(\mathbf{F}_2)$ for every $i \in I$. Since  $\rL^2(qMq)\otimes\ell^2(\mathbf{F}_2) \subset \rL^2(q\widetilde{M}q)\otimes\ell^2(\mathbf{F}_2) = (q\otimes1)\cdot (\rL^2(\widetilde{M})\otimes\ell^2(\mathbf{F}_2))\cdot(q\otimes1) \subset \rL^2(\widetilde{M})\otimes\ell^2(\mathbf{F}_2)$, we can follow line by line the rest of the proof of \cite[Theorem 3.4, pp 704--709]{Va13} inside $\rL^2(\widetilde{M})\otimes\ell^2(\mathbf{F}_2)$ with the following remarks: 
\begin{itemize} 
\item[1$^\circ$.] $\rL^2(\widetilde{M})=\rL^2(M)\oplus(\rL^2(M)\otimes_B\mathcal{Y}\otimes_B \rL^2(M))$ as $M$-$M$-bimodules for some $B$-$B$-bimodule $\mathcal{Y}$ and hence $\rL^2(\widetilde{M})\ominus \rL^2(M) = \rL^2(M)\otimes_B\mathcal{L}'$ with some $B$-$M$-bimodule $\mathcal{L}'$. 
\item[2$^\circ$.] A key formula \cite[Lemma 3.2]{Va13} (essentially due to Ioana) holds even in the semifinite setting (whose proof goes along that of \cite[Lemma 3.5]{BHR12}). 
\item[3$^\circ$.] The semifinite counterpart of \cite[Theorem 3.1]{Va13} (that is essentially due to Ioana--Peterson--Popa \cite{IPP05}) was already provided by Boutonnet--Houdayer--Raum \cite[Theorem 3.3]{BHR12} and we need to use it in place of \cite[Theorem 3.1]{Va13}.  
\end{itemize}
Following line by line the proof of \cite[Theorem 3.4, pp 704--709]{Va13}, we can then reach a contradiction. Giving the full details is just a task of understanding Vaes's argument modulo the above three remarks. 
\end{proof}

Let $p \in qMq$ be any nonzero projection and $A \subset pMp$ any von Neumann subalgebra. Assume that $A$ is amenable relative to $M_i$ inside $M$ for some $i \in \{1,2\}$. Then, by checking Theorem \ref{T1} (4) and regarding $\rL^2(M)$ as a subspace of $\rL^2(\widetilde{M})$ naturally, we see that $A$ is amenable relative to $M_i$ inside $\widetilde{M}$. For every $t \in (0,1)$, $\theta_t(A)$ is amenable relative to $\theta_t(M_i) = u_i^t M_i u_i^t{}^*$ inside $\widetilde{M}$. The Jones projection $e_{\theta_t(M_i)}$ coincides with $u_i^t e_{M_i} u_i^t{}^*$ so that $\langle \widetilde{M},\theta_t(M_i)\rangle = \langle\widetilde{M},M_i\rangle$ and hence $\theta_t(A)$ is amenable relative to $M_i$ and also to $N$ since $M_i \subset N$. Since $\langle q\widetilde{M}q, qNq\rangle = q\langle \widetilde{M},N\rangle q$ (see the proof of Theorem \ref{T3}), $\theta_t(A)$ is amenable relative to $qNq$ inside $q\widetilde{M}q$ thanks to Theorem \ref{T1} (1). Applying Popa--Vaes's dichotomy result \cite[Theorem 1.6 and Remark 6.3]{PV11} to $\theta_t(A) \subset q\widetilde{M}q = qNq\rtimes_{\Ad(\lambda)}\mathbf{F}_2$, we have that at least one of the following conditions holds: $\theta_t(A) \preceq_{q\widetilde{M}q} qNq$ or $\theta_t(\mathcal{N}_{pMp}(A)\dpr)$ ($\subset \mathcal{N}_{\theta_t(p)\widetilde{M}\theta_t(p)}(\theta_t(A))\dpr$) is amenable relative to $qNq$ inside $q\widetilde{M}q$. Since this is true for every $t \in (0, 1)$, at least one of the following conditions holds: 
\begin{itemize}
\item[(A)] $\theta_t(A) \preceq_{q\widetilde{M}q} qNq$, and hence $\theta_t(A) \preceq_{\widetilde{M}} N$ for some $t \in (0, 1)$ or 
\item[(B)] $\theta_t(\mathcal{N}_{pMp}(A)\dpr)$ is amenable relative to $qNq$ inside $q\widetilde{M}q$ for every $t \in (0, 1)$. 
\end{itemize}

In case (A), we use \cite[Theorem 3.4]{BHR12} (whose proof actually works even when the projection $p$ there lies in $\Proj(\mathcal{M})$ rather than $\Proj(\mathcal{B})$) and the consequence is that $A \preceq_M B$ or $\mathcal{N}_{pMp}(A)\dpr \preceq_M M_i$ for some $i \in \{1,2\}$. In case (B), Theorem \ref{T3} implies that $\mathcal{N}_{pMp}(A)\dpr \preceq_M M_i$ for some $i \in \{1,2\}$ or $\mathcal{N}_{pMp}(A)\dpr$ is amenable relative to $B$ inside $M$. Consequently, we obtain the following result.

\begin{thm}\label{theorem-appendix-AFP} Let $p \in qMq$ be any nonzero projection and $A \subset pMp$  any von Neumann subalgebra. Assume that $A$ is amenable relative to one of the $M_i$ inside $M$. Then at least one of the following holds: 
\begin{itemize} 
\item $A \preceq_M B$. 
\item Either $\mathcal{N}_{pMp}(A)\dpr \preceq_M M_1$ or $\mathcal{N}_{pMp}(A)\dpr \preceq_M M_2$ holds. 
\item $\mathcal{N}_{pMp}(A)\dpr$ is amenable relative to $B$ inside $M$. 
\end{itemize}
\end{thm}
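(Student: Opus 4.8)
The plan is to exploit the semifinite malleable deformation $(M \subset \widetilde M, \widetilde\Tr, \theta_t)$ together with the crossed product decomposition $\widetilde M = N \rtimes_{\Ad(\lambda)} \F_2$ constructed above, feed the resulting relative amenability into Popa--Vaes's dichotomy for crossed products by $\F_2$, and finally appeal to the semifinite version of Vaes's theorem (Theorem \ref{T3}) and to the intertwining result \cite[Theorem 3.4]{BHR12}.

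First I would transfer the hypothesis up to $\widetilde M$ and spread it across the deformation. Regarding $\rL^2(M)$ as a subspace of $\rL^2(\widetilde M)$, the assumption that $A$ is amenable relative to some $M_i$ inside $M$ immediately gives (via Theorem \ref{T1} (4), which transfers verbatim) that $A$ is amenable relative to $M_i$ inside $\widetilde M$. Applying the automorphism $\theta_t$, the algebra $\theta_t(A)$ is amenable relative to $\theta_t(M_i) = u_i^t M_i (u_i^t)^*$ inside $\widetilde M$; since the Jones projection satisfies $e_{\theta_t(M_i)} = u_i^t e_{M_i} (u_i^t)^*$, one has $\langle \widetilde M, \theta_t(M_i)\rangle = \langle \widetilde M, M_i\rangle$, so in fact $\theta_t(A)$ is amenable relative to $M_i$, and hence to the larger algebra $N \supset M_i$. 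Using $\langle q\widetilde M q, qNq\rangle = q\langle \widetilde M, N\rangle q$, this relative amenability descends to the corner: $\theta_t(A)$ is amenable relative to $qNq$ inside $q\widetilde M q$ for every $t \in (0,1)$.

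Next I would apply Popa--Vaes's dichotomy \cite[Theorem 1.6 and Remark 6.3]{PV11} to the inclusion $\theta_t(A) \subset q\widetilde M q = qNq \rtimes_{\Ad(\lambda)} \F_2$. For each fixed $t$ this yields either $\theta_t(A) \preceq_{q\widetilde M q} qNq$ or that $\theta_t(\mathcal N_{pMp}(A)\dpr)$ is amenable relative to $qNq$ inside $q\widetilde M q$. Letting $t$ vary then collapses matters into two global cases: either (A) $\theta_t(A) \preceq_{\widetilde M} N$ for some single $t \in (0,1)$, or (B) $\theta_t(\mathcal N_{pMp}(A)\dpr)$ is amenable relative to $qNq$ for all $t \in (0,1)$. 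In case (A) I would invoke \cite[Theorem 3.4]{BHR12} (noting that its proof remains valid when the relevant projection lies in $\Proj(\mathcal M)$ rather than $\Proj(\mathcal B)$) to conclude that either $A \preceq_M B$ or $\mathcal N_{pMp}(A)\dpr \preceq_M M_i$ for some $i$; in case (B) Theorem \ref{T3} gives that either $\mathcal N_{pMp}(A)\dpr \preceq_M M_i$ for some $i$ or $\mathcal N_{pMp}(A)\dpr$ is amenable relative to $B$ inside $M$. Assembling the outcomes of both cases produces exactly the three stated alternatives.

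The main obstacle is the bookkeeping of the various basic extensions and bimodule identifications in the semifinite setting --- in particular verifying $\langle q\widetilde M q, qNq\rangle = q\langle \widetilde M, N\rangle q$ with matching dual operator-valued weights and traces, and checking that the transfer of relative amenability through $\theta_t$ is insensitive to which $M_i$ one started with. Once these identifications are in place, the argument is driven entirely at the Hilbert-bimodule level, where Lemma \ref{L2} and the semifinite reconstructions of the Popa--Vaes and Vaes dichotomies (Theorem \ref{T3} and \cite[Theorems 3.3 and 3.4]{BHR12}) apply as black boxes.
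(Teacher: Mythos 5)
Your proposal is correct and follows essentially the same route as the paper's own argument: the same transfer of relative amenability to $\widetilde M$ via Theorem \ref{T1} (4), the same use of $\theta_t$ and the Jones projection identity to pass to $N$ and then to the corner $q\widetilde M q$, the same application of Popa--Vaes's dichotomy \cite[Theorem 1.6 and Remark 6.3]{PV11} with the resulting case split (A)/(B), and the same invocations of \cite[Theorem 3.4]{BHR12} in case (A) and Theorem \ref{T3} in case (B). There is nothing to add or correct.
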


Suppose that $A$ is amenable. Then $A$ is amenable relative to any von Neumann subalgebra with expectation inside $M$. Hence the above dichotomy holds. Suppose moreover that $B$ is also amenable but $\mathcal{N}_{pMp}(A)\dpr$ is {\em not}. Then, it is impossible that $\mathcal{N}_{pMp}(A)\dpr$ is amenable relative to $B$ inside $M$. In fact, there exists a (non-normal) conditional expectation from $\mathbf B(p\rL^2(M))$ onto $p\langle M,B\rangle p$ since $B$ is amenable and thus $\mathcal{N}_{pMp}(A)\dpr$ must be amenable, a contradiction. Therefore, the dichotomy becomes that one of $A \preceq_M B$, $\mathcal{N}_{pMp}(A)\dpr \preceq_M M_1$ and $\mathcal{N}_{pMp}(A)\dpr \preceq_M M_2$ holds true.

\bibliographystyle{plain}

\end{document}